\def\id{\operatorname{id}}
\newtheorem{theorem}{Theorem}[section]
\newtheorem{lemma}[theorem]{Lemma}
\newtheorem{remark}[theorem]{Remark}
\newtheorem{definition}[theorem]{Definition}
\begin{document}
\title{Moduli spaces of oriented Type~$\mathcal{A}$ manifolds of dimension at least 3}
\author{P. Gilkey}
\address{PG: Mathematics Department, University of Oregon,
 Eugene OR 97403, USA.}
\email{gilkey@uoregon.edu}
\author{J. H. Park}
\address{JHP:Department of
Mathematics, Sungkyunkwan University, Suwon, 16419 Korea.\newline
\phantom{............}School of Mathematics, Korea Institute for Advanced Study,\newline
\phantom{............}85 Hoegiro Dongdaemun-gu, Seoul, 02455 Korea.}
\email{parkj@skku.edu}
\keywords{Ricci tensor, moduli space, homogeneous affine manifold}
\subjclass[2010]{53C21}
\begin{abstract}
We examine the moduli space of oriented locally homogeneous
manifolds of Type~$\mathcal{A}$ which have non-degenerate symmetric
Ricci tensor both in the setting of manifolds with torsion
and also in the torsion free setting where the dimension is at least 3. These exhibit
phenomena that is very different than in the case of surfaces. In dimension 3, we determine
all  the possible symmetry groups in the torsion free setting.

\end{abstract}
\keywords{Ricci tensor, moduli space, locally homogeneous affine manifold, connection with torsion}
\subjclass[2010]{53C21}
\maketitle
\section{Introduction}
Let $M$ be a smooth oriented manifold of dimension $m$;
if $M\subset\mathbb{R}^m$, then the orientation will be given by $dx^1\wedge\dots\wedge dx^m$. Let $\nabla$
be a connection on
the tangent bundle of $M$. One says that $\mathcal{M}:=(M,\nabla)$ is {\it torsion free} if $\nabla_\xi\eta-\nabla_\eta\xi =[\xi,\eta]$.
Let $\vec x:=(x^1,\dots,x^m)$ be a system of local coordinates on $M$. Adopt the {\it Einstein convention} and sum
over repeated indices to expand $\nabla_{\partial_{x^i}}\partial_{x^j}=\Gamma_{ij}{}^k\partial_{x^k}$ in terms of the
Christoffel symbols $\Gamma=(\Gamma_{ij}{}^k)$; the condition that $\nabla$ is torsion free is then equivalent
to the symmetry $\Gamma_{ij}{}^k=\Gamma_{ji}{}^k$. The importance of the torsion free condition lies in the following
result which permits one to normalize the coordinate system so that only
the second and higher order derivatives of the connection $1$-form play a role. As we shall not be using this result, we shall omit
the proof and refer the interested reader to, for example,  Lemma~3.5 of \cite{GPV15} for the proof.
\begin{theorem}
$\mathcal{M}$ is torsion free if and only if for every point $P$ of $M$, there exist coordinates centered at $P$ so that $\Gamma_{ij}{}^k(P)=0$.
\end{theorem}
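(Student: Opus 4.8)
The plan is to prove both implications by exploiting the transformation law for the Christoffel symbols under a change of local coordinates. The structural fact driving everything is that, although the array $\Gamma_{ij}{}^k$ is not tensorial, its antisymmetric part in the lower indices, namely the torsion $T_{ij}{}^k:=\Gamma_{ij}{}^k-\Gamma_{ji}{}^k$, \emph{is} a tensor; hence whether or not $T$ vanishes at a point is independent of the chosen coordinates. The reverse implication then becomes immediate: if about each point $P$ one has coordinates centered at $P$ with $\Gamma_{ij}{}^k(P)=0$, then in those coordinates $T_{ij}{}^k(P)=\Gamma_{ij}{}^k(P)-\Gamma_{ji}{}^k(P)=0$, and by tensoriality $T$ vanishes at $P$ in every coordinate system. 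Since $P$ is arbitrary, $T\equiv 0$ and $\mathcal{M}$ is torsion free.

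For the forward implication I would fix $P$, choose arbitrary coordinates $\vec x$ centered at $P$ so that $P$ is the origin, and introduce new coordinates $\vec y$ by the explicit quadratic substitution
\[
x^c=y^c-\tfrac12\,\Gamma_{ab}{}^c(P)\,y^a y^b.
\]
Differentiating once gives $\partial x^c/\partial y^d=\delta^c_d-\Gamma_{db}{}^c(P)\,y^b$, where the symmetry $\Gamma_{ab}{}^c=\Gamma_{ba}{}^c$ (equivalent to torsion-freeness) has been used to merge the two resulting terms. At the origin the Jacobian equals the identity, so by the inverse function theorem this defines a legitimate change of coordinates on a neighborhood of $P$.

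The key step is to insert this substitution into the transformation law for the Christoffel symbols,
\[
\tilde\Gamma_{ab}{}^c=\frac{\partial y^c}{\partial x^k}\left(\frac{\partial x^i}{\partial y^a}\frac{\partial x^j}{\partial y^b}\,\Gamma_{ij}{}^k+\frac{\partial^2 x^k}{\partial y^a\partial y^b}\right),
\]
and to evaluate it at $P$. There the first-order factors collapse to Kronecker deltas so that the tensorial term contributes $\Gamma_{ab}{}^c(P)$, while the second-order term contributes $\partial^2 x^c/\partial y^a\partial y^b=-\Gamma_{ab}{}^c(P)$, which is a constant by construction. Hence $\tilde\Gamma_{ab}{}^c(P)=\Gamma_{ab}{}^c(P)-\Gamma_{ab}{}^c(P)=0$, as required.

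I expect the substantive point to be conceptual rather than computational: the construction can succeed only because $\Gamma$ is symmetric in its lower indices. The Hessian $\partial^2 x^k/\partial y^a\partial y^b$ is automatically symmetric in $a$ and $b$, so it can cancel $\Gamma_{ab}{}^c(P)$ precisely when the latter is itself symmetric, which is exactly the torsion-free hypothesis. This is what distinguishes the two directions of the equivalence; the antisymmetric torsion part of the connection, being tensorial, can never be removed by any coordinate change, so no normalizing coordinates exist at a point where $T(P)\neq0$.
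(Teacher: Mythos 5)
Your proof is correct; note that the paper itself omits the proof of this theorem, deferring to Lemma~3.5 of \cite{GPV15}, and your argument --- tensoriality of the torsion $T_{ij}{}^k=\Gamma_{ij}{}^k-\Gamma_{ji}{}^k$ for the reverse direction, and the quadratic substitution $x^c=y^c-\tfrac12\Gamma_{ab}{}^c(P)y^ay^b$ fed into the Christoffel transformation law for the forward direction --- is exactly the standard argument given there. Your closing remark correctly identifies the crux: the symmetric Hessian $\partial^2x^k/\partial y^a\partial y^b$ can only cancel the symmetric part of $\Gamma_{ab}{}^c(P)$, which is why the construction requires, and characterizes, the torsion free condition.
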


Although much of Riemannian geometry involves the study of the Levi-Civita connection, which is without torsion,
in recent years connections which have torsion have played an important role in many developments. We refer,
for example, to work on B-metrics \cite{FI03,IM14,M12,S12}, on almost hypercomplex geometries \cite{M11}, on
string theory \cite{A06,FI02,GMW04,I04}, on spin geometries \cite{K10},  on torsion-gravity \cite{BM16,C12,D15,F13,F14,F15,LP13,LP13a,VCF15},
on contact geometries \cite{ACF05,FI03}, on almost product manifolds \cite{Me12}, non-integrable
geometries \cite{ACFH15,B15}, on the non-commutative residue
for manifolds with boundary \cite{WWY14},
on Hermitian and anti-Hermitian
geometry \cite{MG11}, CR geometry \cite{DL15}, hyper-K\"ahler with torsion supersymmetric sigma models \cite{FIS14,FS15,FS16,Sx12},
Einstein--Weyl gravity at the linearized level \cite{DEG13}, Yang-Mills flow with torsion \cite{GDLS14}, ESK theories \cite{RFSC13},
double field theory \cite{HZ13}, BRST theory \cite{FLM16},
and the symplectic and elliptic geometries of gravity \cite{CSE11}. Perhaps surprisingly, even the $2$-dimensional case is of interest;
connections on surfaces have been used to construct
new examples of pseudo-Riemannian metrics without a corresponding Riemannian counterpart \cite{CGGV09, CGV10, De, KoSe}.

\subsection{Local homogeneity}One says that $\mathcal{M}$ is {\it locally homogeneous} if given any two points $P$ and $Q$ of $M$, there exists
the germ of a diffeomorphism $\Phi$ taking $P$ to $Q$ which preserves $\nabla$.
Physically, the locally homogeneous setting is of particular interest as it corresponds to locally isotropic geometries.
One has the following examples of homogeneous
geometries:
\smallbreak\noindent{\bf Type~$\mathcal{A}$.} Let $M=\mathbb{R}^m$ and let
$\Gamma\in(\mathbb{R}^{2m})^*\otimes\mathbb{R}^m$ be constant. The
translation group $\mathbb{R}^m$ acts transitively on $M$
and preserves $\nabla$.
\smallbreak\noindent{\bf Type~$\mathcal{B}$.} Let $M=\mathbb{R}^+\times\mathbb{R}^{m-1}$ and let $\Gamma_{ij}{}^k=(x^1)^{-1}{C_{ij}{}^k}$
for $C\in(\mathbb{R}^{2m})^*\otimes\mathbb{R}^m$ constant. The $ax+b$ group $(x^1,x^2\dots)\rightarrow(ax^1,ax^2+b^2,\dots,ax^m+b^m)$
for $a>0$ and $\vec b=(b^2,\dots,b^m)\in\mathbb{R}^{m-1}$ acts transitively on $M$ and preserves $\nabla$.
\smallbreak\noindent{\bf Type~$\mathcal{C}$.} Let $\nabla$ be the Levi-Civita connection of a complete simply connected
pseudo-Riemannian manifold $M$ of constant sectional curvature.

\subsection{Two dimensional geometry}
The examples given above provide a complete family of models for the locally homogeneous surfaces; any locally
homogeneous surface admits a coordinate atlas modeled on one of these examples.
These classes are not disjoint. No surface is both Type~$\mathcal{A}$ and Type~$\mathcal{C}$.
However there are surfaces that are both Type~$\mathcal{A}$ and Type~$\mathcal{B}$ and there are surfaces which are
both Type~$\mathcal{B}$ and Type~$\mathcal{C}$. We refer to Opozda \cite{Op04} for a proof of the following result in the torsion
free setting and to Arias-Marco and Kowalski \cite{AMK08} for the extension to the case of surfaces with torsion.
We refer as well to \cite{Du, G-SG, KVOp2, KVOp, Opozda} for related work.

\begin{theorem}
Let $\mathcal{M}=(M,\nabla)$ be a locally homogeneous surface where $\nabla$
can have torsion. Then at least one of the following
three possibilities hold that describe the local geometry:
\begin{itemize}
\item[($\mathcal{A}$)] There exists a coordinate atlas so the Christoffel symbols
$\Gamma_{ij}{}^k$ are constant.
\item[($\mathcal{B}$)] There exists a coordinate atlas so the Christoffel symbols have the form
$\Gamma_{ij}{}^k=(x^1)^{-1}C_{ij}{}^k$ for $C_{ij}{}^k$ constant and $x^1>0$.
\item[($\mathcal{C}$)] $\nabla$ is the Levi-Civita connection of a metric of constant Gauss
curvature.
\end{itemize}\end{theorem}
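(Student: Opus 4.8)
\emph{Proof proposal.} The plan is to reduce the statement to the classification of transitive Lie algebras of local Killing vector fields. Local homogeneity means that the pseudogroup of germs of $\nabla$-affine diffeomorphisms acts transitively on $M$; by the standard infinitesimal homogeneity theory for affine connections (the affine analogue of Singer's theorem), this is equivalent to the existence, near any base point $P$, of a finite-dimensional Lie algebra $\mathfrak{g}$ of affine Killing vector fields whose evaluation map $\operatorname{ev}_P\colon\mathfrak{g}\to T_PM$ is surjective. Let $\mathfrak{h}:=\ker\operatorname{ev}_P$ be the isotropy subalgebra, so that $\dim\mathfrak{g}=\dim\mathfrak{h}+2$. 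The proof then splits according to $\dim\mathfrak{h}$, and the key structural point is that the isotropy representation of $\mathfrak{h}$ on $T_PM$ is faithful, since an affine symmetry germ fixing $P$ and acting trivially on $T_PM$ must be the identity.

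First I would treat the case $\dim\mathfrak{h}=0$. Here $\mathfrak{g}$ is two dimensional and acts simply transitively, so $\mathcal{M}$ is the germ of a Lie group $G$ carrying a left-invariant connection, and the Christoffel symbols are constant in any left-invariant frame. There are exactly two isomorphism classes of two-dimensional Lie algebras: the abelian one and the non-abelian one with $[X,Y]=X$. In the abelian case the left-invariant frame integrates to a coordinate frame (the translations are symmetries), giving constant $\Gamma_{ij}{}^k$ and hence possibility~($\mathcal{A}$). In the non-abelian case one realizes $G$ as the $ax+b$ group acting on $\{x^1>0\}$; writing the left-invariant connection in the induced coordinates and imposing invariance under $(x^1,x^2)\mapsto(ax^1,ax^2+b)$ forces the scaling $\Gamma_{ij}{}^k=(x^1)^{-1}C_{ij}{}^k$ with $C$ constant, which is possibility~($\mathcal{B}$).

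The remaining case $\dim\mathfrak{h}\ge1$ is where I expect the real work to lie. A nonzero element of $\mathfrak{h}$ yields a nontrivial endomorphism $A$ of $T_PM$ that is an infinitesimal affine symmetry fixing $P$; I would analyze the possible Jordan forms of $A$ in $\mathfrak{gl}(2,\mathbb{R})$ and exploit invariance of $\nabla$, its curvature, and its torsion under the one-parameter group generated by $A$ to pin down the whole connection. Such continuous isotropy is extremely rigid: it should force a nondegenerate symmetric tensor built from $\nabla$ to be parallel, so that $\nabla$ is the Levi-Civita connection of an invariant metric, which by transitivity and two-dimensionality has constant Gauss curvature, namely possibility~($\mathcal{C}$). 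The main obstacle, and precisely the content of the torsion refinement of Arias-Marco and Kowalski over Opozda's torsion-free result, is to carry out this last step when $\nabla$ has torsion: one must show that the isotropy still produces an invariant metric and that the torsion is forced into the constant-curvature model rather than yielding a genuinely new geometry. Organizing the $\dim\mathfrak{h}\ge1$ analysis so that the subcases remain mutually exclusive and exhaustive, and verifying that no surface geometry with continuous isotropy escapes the constant-curvature conclusion, is the crux of the argument.
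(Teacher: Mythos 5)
First, a remark on scope: the paper does not prove this theorem itself; it is quoted from Opozda \cite{Op04} (torsion free case) and Arias-Marco--Kowalski \cite{AMK08} (torsion case), so there is no internal proof to compare against. Evaluating your proposal on its own terms: the reduction to a transitive Lie algebra $\mathfrak{g}$ of affine Killing fields and the treatment of the simply transitive case $\dim\mathfrak{h}=0$ are sound and match the strategy of the cited proofs. Straightening two commuting independent Killing fields gives constant $\Gamma$ (Type~$\mathcal{A}$); realizing the non-abelian algebra as $\{\partial_2,\ x^1\partial_1+x^2\partial_2\}$ on $x^1>0$ forces $\Gamma$ to be $x^2$-independent and homogeneous of degree $-1$, hence Type~$\mathcal{B}$. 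That part is essentially a complete outline.

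The case $\dim\mathfrak{h}\ge1$, however, contains a genuine gap, and not only because you leave it as a sketch (``it should force,'' ``the crux of the argument''). The proposed mechanism is wrong: continuous isotropy does \emph{not} by itself force $\nabla$ to be the Levi-Civita connection of a constant curvature metric. There are locally homogeneous affine surfaces whose full affine Killing algebra has dimension $3$ or $4$ (so nontrivial connected isotropy) which are Type~$\mathcal{A}$ or Type~$\mathcal{B}$ with degenerate, non-metrizable Ricci tensor, hence are not Type~$\mathcal{C}$; the classification of such surfaces with large symmetry groups is precisely the subject of \cite{BGGP16,BGGP16a}. For a quick instance, $\Gamma_{22}{}^1=1$ with all other symbols zero admits the one-parameter isotropy group $\operatorname{diag}(t^2,t)$ in addition to the translations. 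The correct dichotomy, which is what Opozda actually exploits, is not ``$\dim\mathfrak{h}=0$ versus $\dim\mathfrak{h}\ge1$'' but ``does the full Killing algebra contain a \emph{two-dimensional subalgebra acting transitively}?'' If yes, one reduces to your first case regardless of how large the isotropy is; only if no such subalgebra exists (which pins the algebra down to essentially $\mathfrak{so}(3)$ and its analogues) does one conclude Type~$\mathcal{C}$. Your argument as written would route every surface with continuous isotropy into the Type~$\mathcal{C}$ conclusion, which fails; and even after the dichotomy is corrected, the hard verification that the residual case yields an invariant constant-curvature metric --- especially in the presence of torsion, as in \cite{AMK08} --- is asserted rather than carried out.
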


\subsection{The Ricci tensor} The curvature operator $R$ and
 the Ricci tensor $\rho$ of an arbitrary connection are given by setting
$$
R(\xi,\eta):=\nabla_{\xi}\nabla_{\eta}-\nabla_{\eta}\nabla_{\xi}-\nabla_{[\xi,\eta]}
\text{ and }\rho(\xi,\eta):=\operatorname{Tr}\{\sigma\rightarrow R(\sigma,\xi)\eta\}\,.
$$
In terms of local coordinates,
\begin{equation}\label{E1.a}\begin{array}{l}
R_{ijk}{}^l=\partial_{x_i}\Gamma_{jk}{}^l-\partial_{x^j}\Gamma_{ik}{}^l
+\Gamma_{in}{}^l\Gamma_{jk}{}^n-\Gamma_{jn}{}^l\Gamma_{ik}{}^n,\\[0.05in]
\rho_{jk}=\partial_{x_i}\Gamma_{jk}{}^i-\partial_{x^j}\Gamma_{ik}{}^i+\Gamma_{in}{}^i\Gamma_{jk}{}^n-\Gamma_{jn}{}^i\Gamma_{ik}{}^n
\end{array}\end{equation}
Note that in this setting $\rho$ need not be symmetric.

\subsection{Type~$\mathcal{A}$ geometry}
Let $S^2(\mathbb{R}^m)$ denote the space of symmetric 2-cotensors on $\mathbb{R}^m$;
$\sigma=\sigma_{ij}dx^i\otimes dx^j\in S^2(\mathbb{R}^m)$ if and only if $\sigma_{ij}=\sigma_{ji}$.
The natural parameter spaces with which we shall be working are defined by:
$$
\mathcal{W}(m):=(\mathbb{R}^{2m})^*\otimes\mathbb{R}^m\text{ and }\mathcal{Z}(m):=S^2(\mathbb{R}^m)\otimes\mathbb{R}^m\,.
$$
Since $\mathcal{Z}(m)$ is a subset of $\mathcal{W}(m)$, properties true on $\mathcal{W}(m)$ are often inherited by
$\mathcal{Z}(m)$ and we shall thus often not mention $\mathcal{Z}(m)$ explicitly.
If $\Gamma\in\mathcal{W}(m)$, then $\Gamma$ defines a Type~$\mathcal{A}$ connection $\nabla^\Gamma$ on $\mathbb{R}^m$;
$\Gamma\in\mathcal{Z}(m)\subset\mathcal{W}(m)$ if and only $\nabla^\Gamma$ is torsion free.  If $\Gamma\in\mathcal{W}(m)$ and $\tilde\Gamma\in\mathcal{W}(m)$,
introduce the equivalence relation $\Gamma\sim\tilde\Gamma$ if there exists the germ of an orientation preserving
diffeomorphism $\Phi$ from a point $P$ in $\mathbb{R}^m$ to a point $\tilde P$ in $\mathbb{R}^m$
so that $\Phi^*(\nabla^{\tilde\Gamma})=\nabla^{\Gamma}$; the precise points in question are irrelevant as the structures are
homogeneous. Since the torsion free condition is preserved by diffeomorphism, $\sim$ defines an equivalence relation on $\mathcal{Z}(m)$ as well.

We say that $\mathcal{M}=(M,\nabla)$ is {\it Type~$\mathcal{A}$} if there is an atlas
$\{\mathcal{U}_\alpha=(U_\alpha,\Gamma_\alpha),\Phi_{\alpha_1\alpha_2}\}$ where
$\{U_\alpha,\Phi_{\alpha_1\alpha_2}\}$ forms an oriented coordinate atlas for $M$ (i.e. $\det(\Phi_{\alpha_1\alpha_2})>0$)
and where $\nabla$ is defined by $\Gamma_\alpha\in\mathcal{W}(m)$ on $U_\alpha$. The coordinate transformations
$\{\Phi_{\alpha_1\alpha_2}\}$ satisfy the intertwining rule
$\Phi_{\alpha_1\alpha_2}\nabla^{\Gamma_{\alpha_2}}=\nabla^{\Gamma_{\alpha_1}}\Phi_{\alpha_1\alpha_2}$.
Note that
$\mathcal{M}$ is {\it torsion free} if and only if $\Gamma_\alpha\in\mathcal{Z}(m)$ for all $\alpha$.
The intertwining rule implies that
$\Gamma_{\alpha_i}\sim\Gamma_{\alpha_j}$ for all $i$ and $j$. We wish study the moduli space of local isomorphism types.
If $\tilde{\mathcal{M}}$ is another Type~$\mathcal{A}$ manifold which is defined by an atlas
$\{(\tilde U_\beta,\tilde\Gamma_\beta),\tilde\Phi_{\beta_1,\beta_2}\}$, then $\mathcal{M}$ and $\tilde{\mathcal{M}}$ are locally isomorphic if and only if
$\Gamma_\alpha\sim\tilde\Gamma_\beta$ for all $\alpha,\beta$. The moduli space of such local isomorphism classes
is then the quotient of $\mathcal{W}(m)$ by the equivalence relation $\sim$. We define:
$$
\mathfrak{W}^+(m):=\mathcal{W}(m)/\sim\text{ and }\mathfrak{Z}^+(m):=\mathcal{Z}(m)/\sim\,.
$$

If $\Gamma\in\mathcal{W}(m)$, then
Equation~(\ref{E1.a}) shows that the Ricci tensor associated to $\Gamma$ is
\begin{equation}\label{E1.b}
\rho_{\Gamma,jk}=\Gamma_{in}{}^i\Gamma_{jk}{}^n-\Gamma_{jn}{}^i\Gamma_{ik}{}^n\,.
\end{equation}
For generic $\Gamma\in\mathcal{W}(m)$, $\rho_{\Gamma,jk}\ne\rho_{\Gamma,kj}$ so $\rho_\Gamma$ is in
general not symmetric. One defines, therefore, the {\it symmetric Ricci tensor} by setting:
$$
\rho_{s,\Gamma}(\eta,\zeta):=\textstyle\frac12(\rho(\eta,\zeta)+\rho(\zeta,\eta))\text{ i.e. }
\rho_{s,\Gamma}:=\textstyle\frac12\{\rho_{\Gamma,jk}+\rho_{\Gamma,kj}\}dx^j\otimes dx^k\,.
$$
If  $\Gamma\in\mathcal{Z}(m)$, then
Equation~(\ref{E1.b}) shows $\rho_{\Gamma,jk}=\rho_{\Gamma,kj}$ so the Ricci tensor is already symmetric and there is
no need to symmetrize. We shall be interested in the case that $\rho_{s,\Gamma}$
is non-degenerate as this is the generic case, see Theorem~\ref{T1.6} below.
Let $\operatorname{sign}(\rho_{s,\Gamma})=(p,q)$ be the signature of the symmetric Ricci tensor; there are $p$ timelike
directions and $q$ spacelike directions. Thus $(p,q)=(m,0)$ implies $\rho_{s,\Gamma}$ is negative definite while
$(p,q)=(0,m)$ implies $\rho_{s,\Gamma}$ is positive definite. If $p+q=m$, i.e. $\rho_{s,\Gamma}$ is non-degenerate, set
$$\begin{array}{ll}
\mathcal{W}(p,q):=\{\Gamma\in\mathcal{W}(m):\operatorname{sign}(\rho_{s,\Gamma})=(p,q)\},&
\mathfrak{W}^+(p,q)\,:=\mathcal{W}(p,q)/\sim\\[0.05in]
\mathcal{Z}(p,q)\ :=\{\Gamma\in\mathcal{Z}(m):\ \operatorname{sign}(\rho_{s,\Gamma})=(p,q)\},&
\mathfrak{Z}^+(p,q):=\mathcal{Z}(p,q)/\sim.
\end{array}$$

\subsection{Reduction to the action of the general linear group}
Let $\operatorname{GL}^+(m,\mathbb{R})$ be the group of linear transformations of $\mathbb{R}^m$ which
preserve the orientation, i.e. $\det(T)>0$. This group acts on the Christoffel symbols $\Gamma\in\mathcal{W}(m)$
of a Type~$\mathcal{A}$ geometry by change of coordinates; two indices are down and one is up.
If $\{e_i\}$ is a basis for $\mathbb{R}^m$ and if $T\in\operatorname{GL}(m,\mathbb{R})$, then
$$
(T\Gamma)(e_i,e_j,e^k):=\Gamma(Te_i,Te_j,Te^k)\,.
$$
One has the following observation \cite{BGGP16a} which shows that in fact one does not need to consider arbitrary
diffeomorphisms in defining the moduli space if the symmetric Ricci tensor is non-degenerate
as the diffeomorphisms $\Phi_{\alpha_1\alpha_2}$ in the atlas are affine. This (in principal)
reduces the problem to one in group representation theory.
\begin{theorem}\label{T1.3}
Let $\mathcal{U}_\alpha=\{(U_\alpha,\Gamma_\alpha),\Phi_{\alpha_1\alpha_2}\}$ be an oriented Type~$\mathcal{A}$ atlas on a
Type~$\mathcal{A}$ manifold $\mathcal{M}$. Assume that the Ricci tensor $\rho_{s,\mathcal{M}}$ is non-degenerate.
\begin{enumerate}
\item $\Phi_{\alpha_1\alpha_2}\vec x_{\alpha_2}= A_{\alpha_1\alpha_2}\vec x_{\alpha_2}+\vec b_{\alpha_1\alpha_2}$ where
$A_{\alpha_1\alpha_2}\in\operatorname{GL}^+(m,\mathbb{R})$ and $\vec b_{\alpha_1\alpha_2}\in\mathbb{R}^m$.
\item
$\mathfrak{W}^+(p,q)=\mathcal{W}(p,q)/\operatorname{GL}^+(m,\mathbb{R})$ and
$\mathfrak{Z}^+(p,q)=\mathcal{Z}(p,q)/\operatorname{GL}^+(m,\mathbb{R})$.
\end{enumerate}\end{theorem}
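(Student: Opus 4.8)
The plan is to establish (1) first, as (2) follows quickly from it. Write $\Phi=\Phi_{\alpha_1\alpha_2}$, set $\vec y=\Phi(\vec x)$, and abbreviate the Jacobian and Hessian of $\Phi$ by $J^a_i=\partial y^a/\partial x^i$ and $H^a_{ij}=\partial^2 y^a/\partial x^i\partial x^j$. I would begin from the observation that a connection-preserving diffeomorphism carries the curvature, and hence the Ricci tensor, of $\nabla^{\Gamma_{\alpha_2}}$ to that of $\nabla^{\Gamma_{\alpha_1}}$; symmetrizing, it carries $\rho_{s,\Gamma_{\alpha_2}}$ to $\rho_{s,\Gamma_{\alpha_1}}$. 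In local coordinates this intertwining reads $(\rho_{s,\Gamma_{\alpha_1}})_{ij}=J^a_iJ^b_j(\rho_{s,\Gamma_{\alpha_2}})_{ab}$, and the decisive feature is that, by (\ref{E1.b}), both symmetric Ricci tensors are built from constant Christoffel symbols and so are \emph{constant} tensors on $\mathbb{R}^m$.

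The heart of the argument is then to differentiate this identity. Since $\rho_{s,\Gamma_{\alpha_1}}$ is constant, applying $\partial_{x^k}$ annihilates the left-hand side and yields
\[ 0=H^a_{ki}J^b_j(\rho_{s,\Gamma_{\alpha_2}})_{ab}+J^a_iH^b_{kj}(\rho_{s,\Gamma_{\alpha_2}})_{ab}. \]
I would package this by setting $W_{ijk}:=H^a_{ij}J^b_k(\rho_{s,\Gamma_{\alpha_2}})_{ab}$, which is symmetric in the pair $(i,j)$ because $H$ is a Hessian, while the displayed identity is precisely the statement that $W$ is antisymmetric in the pair $(j,k)$. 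A three-tensor that is symmetric in one adjacent pair of indices and antisymmetric in the other must vanish, by the usual three-line permutation argument, so $W\equiv 0$. Because $J$ is invertible (it is the Jacobian of a diffeomorphism) and $\rho_{s,\Gamma_{\alpha_2}}$ is non-degenerate by hypothesis, $W_{ijk}=0$ forces $H^a_{ij}=0$ for all indices: every second derivative of $\Phi$ vanishes, so $\Phi$ is affine, $\Phi(\vec x)=A\vec x+\vec b$, and orientation preservation gives $A\in\operatorname{GL}^+(m,\mathbb{R})$. This is exactly (1). The one delicate point, and the main obstacle, is this final deduction: it is the non-degeneracy of the symmetric Ricci tensor, and nothing weaker, that upgrades $W=0$ to $H=0$, so the hypothesis is used in an essential way.

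With (1) in hand, (2) is a matter of unwinding the definitions. Each transition function $\Phi_{\alpha_1\alpha_2}$ is affine, and the constant Christoffel symbols of a Type~$\mathcal{A}$ connection are unchanged under the translation $\vec x\mapsto\vec x+\vec b$, so the translation part $\vec b_{\alpha_1\alpha_2}$ plays no role; consequently $\Gamma\sim\tilde\Gamma$ holds if and only if $\Gamma$ and $\tilde\Gamma$ lie in the same orbit of the $\operatorname{GL}^+(m,\mathbb{R})$-action defined above. Hence the quotient of $\mathcal{W}(p,q)$ by $\sim$ coincides with the quotient by $\operatorname{GL}^+(m,\mathbb{R})$, and likewise for $\mathcal{Z}(p,q)$. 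To confirm that this action is well defined on these subsets, I would note that a linear change of basis acts on the constant tensor $\rho_{s,\Gamma}$ by congruence, which preserves its signature by Sylvester's law of inertia, so $\operatorname{GL}^+(m,\mathbb{R})$ maps $\mathcal{W}(p,q)$ to itself; and it preserves the torsion-free locus $\mathcal{Z}(m)$ because the symmetry $\Gamma_{ij}{}^k=\Gamma_{ji}{}^k$ is manifestly respected by the tensorial action. This yields both claimed identifications.
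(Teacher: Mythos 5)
Your proposal is correct and follows essentially the same route as the paper: both identify the symmetric Ricci tensor as an invariantly defined, non-degenerate, constant-coefficient (hence flat) metric preserved by the transition functions, and conclude that those transitions must be affine, after which part (2) is just the observation that translations act trivially on constant Christoffel symbols. The only difference is one of detail --- the paper simply asserts that a diffeomorphism intertwining two constant-coefficient flat metrics is affine, whereas you prove that step explicitly via the Hessian identity and the symmetric/antisymmetric three-index cancellation.
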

\begin{proof} The symmetric Ricci tensor is an invariantly
defined pseudo-Riemannian metric on $\mathcal{M}$ which is preserved by the Type~$\mathcal{A}$ coordinate transformations
$\Phi_{\alpha_1\alpha_2}$.
Since $\Gamma$ is constant, the components of $\rho_{s,\Gamma}$ are constant on $U_\alpha$ for any $\alpha$. Thus
$\rho_{s,\Gamma}$ is flat and the coordinate
transformations have the form given. This establishes Assertion~1; as translations do not change $\Gamma$, only the action of
$\operatorname{GL}^+(m,\mathbb{R})$ is relevant in examining the moduli spaces. Assertion~2 now follows.
\end{proof}

\begin{remark}\rm We note that Theorem~\ref{T1.3} fails if we do not assume the Ricci tensor is non-degenerate.
We refer, for example, to \cite{BGGP16a} for a further discussion of this point in the torsion free setting when $m=2$.
\end{remark}

The remainder of this paper is devoted to the study of
these geometries. Since case of surfaces is dealt with in \cite{BGGP16,BGGP16a,G16}, we shall assume for the remainder
of this paper that $m=p+q\ge3$; there are phenomena in this setting not found in the case $m=2$.

\subsection{Principal bundles} Let $G$ be a Lie group which acts smoothly on a manifold $N$.
Let $G_P:=\{g\in G:gP=P\}$ be the {\it isotropy group} of the action.  The action is said to be
{\it fixed point free} if $G_P=\{\operatorname{id}\}$ for all $P$.
The action is said to be {\it proper} if given points $P_n\in N$ and $g_n\in G$ with $P_n\rightarrow P\in N$ and
$g_nP_n\rightarrow\tilde P\in N$, we can
choose a convergent subsequence so $g_{n_k}\rightarrow g\in G$. We refer to \cite{Bo75,GHL04} for the proof of the following result;
see also the discussion in \cite{G16}.

\begin{theorem}\label{T1.5}
Let the action of a Lie group $G$ on a manifold $N$ be fixed point free, smooth, and proper.
Then there is a natural smooth structure on the quotient space $N/G$ so that $G\rightarrow N\rightarrow N/G$ is a principal $G$ bundle.
\end{theorem}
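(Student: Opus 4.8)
The plan is to follow the classical slice-theorem approach to producing a principal bundle from a free proper action. First I would check that $N/G$ is a Hausdorff, second countable topological space. Second countability is inherited from $N$ through the open quotient map $\pi\colon N\to N/G$. For the Hausdorff property I would use properness to show that the orbit relation $\mathcal{R}:=\{(P,gP):P\in N,\ g\in G\}$ is closed in $N\times N$: if $(P_n,g_nP_n)\to(P,\tilde P)$, then by properness a subsequence satisfies $g_{n_k}\to g$, so $\tilde P=gP$ and $(P,\tilde P)\in\mathcal{R}$. A closed orbit relation together with an open quotient map yields a Hausdorff quotient.

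Next I would record the structure of a single orbit. Because the action is free, the orbit map $\mu_P\colon G\to N$, $g\mapsto gP$, is an injective immersion; properness upgrades it to an embedding, so each orbit $G\cdot P$ is a closed embedded submanifold of $N$ diffeomorphic to $G$. In particular $\dim(G\cdot P)=\dim G$, and the relevant slice dimension is $k:=\dim N-\dim G$.

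The heart of the argument is the construction of slices. Fix $P\in N$ and choose a $k$-dimensional embedded submanifold $S$ through $P$ transverse to the orbit, so that $T_PN=T_P(G\cdot P)\oplus T_PS$. The tube map $\Psi\colon G\times S\to N$, $(g,s)\mapsto g\cdot s$, is then a local diffeomorphism near $(\id,P)$ by the inverse function theorem. I would next shrink $S$ so that $\Psi$ becomes injective and restricts to a diffeomorphism onto an open saturated neighborhood of the orbit. This is the step I expect to be the main obstacle, and it is exactly where properness is indispensable: without it one cannot exclude that arbitrarily nearby points of $S$ lie on a common orbit, as happens for an irrational line flow on a torus. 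Concretely I would argue by contradiction: if no shrinking worked there would be sequences $s_n,s_n'\to P$ in $S$ with $s_n'=g_ns_n$ and $g_n\neq\id$; properness extracts $g_{n_k}\to g$, freeness and continuity force $g=\id$, and the local injectivity of $\Psi$ near $(\id,P)$ then produces a contradiction.

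Finally I would assemble the bundle. The restriction $\pi|_S\colon S\to\pi(S)$ is a homeomorphism onto an open subset of $N/G$, and declaring these maps to be inverse charts endows $N/G$ with a smooth structure; smoothness of the transition functions follows because they are read off from the smooth map $\Psi$. The same slice furnishes a local trivialization: $\pi^{-1}(\pi(S))=G\cdot S\cong G\times S$ via $\Psi$, and under this identification $\pi$ is projection onto $S\cong\pi(S)$ while $G$ acts by left multiplication on the first factor. Verifying that the change-of-trivialization maps between two such charts take values in $G$ acting by left translation completes the proof that $G\to N\to N/G$ is a principal $G$-bundle.
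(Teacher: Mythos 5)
Your proposal is correct and is essentially the standard slice-theorem argument; the paper itself gives no proof of Theorem~\ref{T1.5}, deferring instead to \cite{Bo75,GHL04}, and the proofs in those references proceed exactly as you describe (closedness of the orbit relation for the Hausdorff property, the tube map $\Psi\colon G\times S\to N$, and the properness-plus-freeness contradiction to shrink the slice). The only point worth making explicit is that you need the $G$-equivariance of $\Psi$ to promote the local diffeomorphism at $(\operatorname{id},P)$ to one at every point of $G\times S$, but that is routine.
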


\subsection{Generic phenomena} Let $\mathfrak{P}_m=\mathfrak{P}_m(\Gamma)$ be a polynomial defined on $\mathcal{W}(m)$
which is divisible by $\det(\rho_{s,\Gamma})$ and
which doesn't vanish identically on $\mathcal{Z}(m)$. Let
\smallbreak\qquad$\mathcal{W}(p,q;\mathfrak{P}_m):=\{\Gamma\in\mathcal{W}(p,q):\mathfrak{P}_m(\Gamma)\ne0\}$,
\smallbreak\qquad$\mathfrak{W}^+(p,q;\mathfrak{P}_m):=\mathcal{W}(p,q;\mathfrak{P}_m)/\operatorname{GL}^+(m,\mathbb{R})$,
\smallbreak\qquad$\mathcal{Z}(p,q;\mathfrak{P}_m):=\{\Gamma\in\mathcal{Z}(p,q):\mathfrak{P}_m(\Gamma)\ne0\}$,
\smallbreak\qquad$\mathfrak{Z}^+(p,q;\mathfrak{P}_m):=\mathcal{Z}(p,q;\mathfrak{P}_m)/\operatorname{GL}^+(m,\mathbb{R})$;
\smallbreak\noindent $\mathcal{W}(p,q;\mathfrak{P}_m)$ and $\mathcal{Z}(p,q;\mathfrak{P}_m)$ are open dense subsets of
$\mathcal{W}(p,q)$ and $\mathcal{Z}(p,q)$, respectively.
We will prove the following result in Section~\ref{S2}.
\begin{theorem}\label{T1.6}
There exists a polynomial $\mathfrak{P}_m$ so that
$\mathcal{W}(p,q;\mathfrak{P}_m)$ and $\mathcal{Z}(p,q;\mathfrak{P}_m)$ are $\operatorname{GL}^+(m,\mathbb{R})$ invariant
subsets on which $\operatorname{GL}^+(m,\mathbb{R})$ acts properly and without fixed points.
Consequently, there are natural smooth structures on the moduli spaces
$\mathfrak{W}^+(p,q;\mathfrak{P}_m)$ and
$\mathfrak{Z}^+(p,q;\mathfrak{P}_m)$  so the projections
$\mathcal{W}(p,q;\mathfrak{P}_m)\rightarrow\mathfrak{W}^+(p,q;\mathfrak{P}_m)$
and $\mathcal{Z}(p,q;\mathfrak{P}_m)\rightarrow\mathfrak{Z}^+(p,q;\mathfrak{P}_m)$ are smooth principal
$\operatorname{GL}^+(m,\mathbb{R})$ bundles.
\end{theorem}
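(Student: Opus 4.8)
The plan is to realize Theorem~\ref{T1.6} as an application of Theorem~\ref{T1.5}, so that the entire content is to produce a polynomial $\mathfrak{P}_m$ for which the $\operatorname{GL}^+(m,\mathbb{R})$-action on $\mathcal{W}(p,q;\mathfrak{P}_m)$ is both proper and fixed point free; the assertion about smooth principal bundles is then automatic, and the corresponding statements for $\mathcal{Z}(p,q;\mathfrak{P}_m)$ follow by restriction once $\mathfrak{P}_m$ is known not to vanish identically on $\mathcal{Z}(m)$. First I would record the equivariance that drives everything. As already used in the proof of Theorem~\ref{T1.3}, the assignment $\Gamma\mapsto\rho_{s,\Gamma}$ is $\operatorname{GL}^+(m,\mathbb{R})$-equivariant, the symmetric Ricci tensor transforming as an ordinary $(0,2)$-tensor; hence $\rho_{s,T\Gamma}=T^{t}\rho_{s,\Gamma}T$ as symmetric matrices and $\det(\rho_{s,T\Gamma})=\det(T)^{2}\det(\rho_{s,\Gamma})$. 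Thus $\det(\rho_{s,\Gamma})$ is a relative invariant with positive character, so its non-vanishing locus and the signature $(p,q)$ are $\operatorname{GL}^+(m,\mathbb{R})$-invariant, and requiring $\det(\rho_{s,\Gamma})\mid\mathfrak{P}_m$ guarantees that $\mathcal{W}(p,q;\mathfrak{P}_m)\subseteq\mathcal{W}(p,q)$ is open and invariant. I would take $\mathfrak{P}_m=\det(\rho_{s,\Gamma})\cdot\mathfrak{Q}_m$, where $\mathfrak{Q}_m$ is a further relative invariant to be chosen below.

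Next I would reduce the structure group. Because every $\Gamma\in\mathcal{W}(p,q)$ has $\rho_{s,\Gamma}$ of signature $(p,q)$, and $\operatorname{GL}^+(m,\mathbb{R})$ acts transitively on the connected space of such forms with stabiliser $SO(p,q)=O(p,q)\cap\operatorname{GL}^+(m,\mathbb{R})$, the equivariant map $\Gamma\mapsto\rho_{s,\Gamma}$ exhibits $\mathcal{W}(p,q)$ as the associated bundle $\operatorname{GL}^+(m,\mathbb{R})\times_{SO(p,q)}F$, where $F:=\{\Gamma:\rho_{s,\Gamma}=g_0\}$ is the fibre over a fixed reference form $g_0$. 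By the standard reduction of an action fibred over a homogeneous space, the $\operatorname{GL}^+(m,\mathbb{R})$-action on $\mathcal{W}(p,q;\mathfrak{P}_m)$ is proper, respectively free, if and only if the $SO(p,q)$-action on $F\cap\{\mathfrak{Q}_m\ne0\}$ is proper, respectively free. This isolates the genuine difficulty: $SO(p,q)$ is non-compact whenever $p,q\ge1$, so the Ricci metric alone cannot pin down the group element.

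The role of $\mathfrak{Q}_m$ is therefore to supply enough additional $SO(p,q)$-invariant data to control the remaining non-compact directions. Using $g_0$ to lower the upper index of $\Gamma$ produces a $(0,3)$-tensor whose traces give distinguished $1$-forms such as $\Gamma_{in}{}^{i}$, while the antisymmetric part of the Ricci tensor produces a skew operator $g_0^{-1}\rho_{a,\Gamma}$; the norms, contractions, and spectral data of these tensors are polynomial $SO(p,q)$-invariants, equivalently $\operatorname{GL}^+(m,\mathbb{R})$-relative invariants on $\mathcal{W}(m)$. I would define $\mathfrak{Q}_m$ as a product of such invariants chosen so that on $\{\mathfrak{Q}_m\ne0\}$ these tensors rigidify a frame. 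For properness, suppose $\Gamma_n\to\Gamma$ and $T_n\Gamma_n\to\tilde\Gamma$ in $F\cap\{\mathfrak{Q}_m\ne0\}$ with $T_n\in SO(p,q)$; applying the Cartan decomposition $T_n=k_n a_n k_n'$ relative to $g_0$, any escape $T_n\to\infty$ forces some invariant evaluated on $T_n\Gamma_n$ to blow up, unless $\Gamma$ lies in the proper subvariety $\{\mathfrak{Q}_m=0\}$, and this contradiction yields a convergent subsequence. For fixed point freeness, $T\Gamma=\Gamma$ with $T\in SO(p,q)$ forces $T$ to fix the frame determined by the generic invariants, whence $T=\operatorname{id}$.

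Finally I would exhibit one explicit torsion free $\Gamma\in\mathcal{Z}(m)$ with $\mathfrak{P}_m(\Gamma)\ne0$, so that $\mathfrak{Q}_m$ does not vanish identically on $\mathcal{Z}(m)$ and $\mathcal{Z}(p,q;\mathfrak{P}_m)$ is a non-empty open dense subset, and then invoke Theorem~\ref{T1.5} for both $\mathcal{W}(p,q;\mathfrak{P}_m)$ and $\mathcal{Z}(p,q;\mathfrak{P}_m)$ to conclude. The main obstacle is precisely the construction of $\mathfrak{Q}_m$: one must verify both that finitely many such invariants genuinely rigidify the $SO(p,q)$-action on the complement of their common zero set and that this complement still meets $\mathcal{Z}(m)$.
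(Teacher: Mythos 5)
Your overall architecture matches the paper's: build a relative invariant $\mathfrak{P}_m$ divisible by $\det(\rho_{s,\Gamma})$, show the action on its non-vanishing locus is proper and fixed point free, and invoke Theorem~\ref{T1.5}. But the heart of the proof is exactly the part you defer --- the construction of $\mathfrak{Q}_m$ and the verification that it ``rigidifies a frame'' --- and your sketch of how that would go contains a real error. You propose to detect escape $T_n\to\infty$ in the Cartan decomposition by arguing that ``some invariant evaluated on $T_n\Gamma_n$ must blow up.'' An $SO(p,q)$-invariant scalar satisfies $\mathfrak{Q}(T_n\Gamma_n)=\mathfrak{Q}(\Gamma_n)\to\mathfrak{Q}(\Gamma)$ by definition, and $T_n\Gamma_n\to\tilde\Gamma$ by hypothesis, so nothing invariant can blow up along such a sequence; invariant scalars are constant on orbits and therefore cannot control the group element at all. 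What is needed is not an invariant but an \emph{equivariant} object: a basis of $\mathbb{R}^m$ depending polynomially on $\Gamma$ and transforming by $T$ itself.

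That is precisely what the paper constructs in Lemma~\ref{L2.1}. Setting $\omega=\Gamma_{ij}{}^jdx^i$ and $\rho_{2,\Gamma}=\Gamma_{jn}{}^i\Gamma_{ik}{}^ndx^j\otimes dx^k$, one expands $(\rho_{s,\Gamma}+\varepsilon\rho_{2,s,\Gamma})^{-1}\omega=\sum_n\xi_{\Gamma,n}\varepsilon^n$ (cleared of denominators via the cofactor matrix so everything is polynomial) and takes $\mathfrak{P}_m(\Gamma)=\det(\rho_{s,\Gamma})\det(\tilde\xi_{\Gamma,0},\dots,\tilde\xi_{\Gamma,m-1})$. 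Since $T\xi_{\Gamma,i}=\xi_{T\Gamma,i}$, the set $\mathcal{B}_\Gamma=\{\xi_{\Gamma,0},\dots,\xi_{\Gamma,m-1}\}$ is an equivariant frame wherever $\mathfrak{P}_m\ne0$; fixed point freeness follows because $T\mathcal{B}_\Gamma=\mathcal{B}_\Gamma$ forces $T=\operatorname{id}$, and properness follows because $T_n$ is the unique linear map carrying $\mathcal{B}_{\Gamma_n}$ to $\mathcal{B}_{T_n\Gamma_n}$ and both frames converge --- no Cartan decomposition or reduction to $SO(p,q)$ is needed for this theorem (the paper reserves that device, Lemma~\ref{L4.1}, for Theorem~\ref{T1.8}). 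Finally, the non-vanishing of $\mathfrak{P}_m$ on $\mathcal{Z}(m)$, which you also leave as a promissory note, requires genuine work: the paper exhibits explicit torsion free examples in a three-case analysis ($m=2$, $m$ even, $m$ odd) with an asymptotic argument to propagate the $2$- and $3$-dimensional computations to all dimensions. As it stands your proposal is a plan whose two critical steps are respectively missing and misconceived.
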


\subsection{Results concerning the isotropy subgroup} Let $G_\Gamma^+$ be the group of orientation preserving symmetries of
$(\mathbb{R}^m,\nabla^\Gamma)$; $G_\Gamma^+:=\{T\in\operatorname{GL}^+(m,\mathbb{R}):T\Gamma=\Gamma\}$.
We will prove the following result in Section~\ref{S3}.

\begin{theorem}\label{T1.7}
\ \begin{enumerate}
\item Let $\Gamma_n\in\mathcal{W}(p,q)$ satisfy $\Gamma_n\rightarrow\Gamma\in\mathcal{W}(p,q)$.
If $\dim\{G_{\Gamma_n}^+\}\ge1$, then $\dim\{G_\Gamma^+\}\ge1$.
\item There exists $c(m)$ so that if $\Gamma\in\mathcal{W}(p,q)$ and  if no element of $G_\Gamma^+$ has infinite order, then every element in
$G_\Gamma^+$ has order at most $c(m)$. Furthermore, $\lim_{m\rightarrow\infty}c(m)=\infty$.
\end{enumerate}
\end{theorem}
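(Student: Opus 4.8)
The plan is to treat $G_\Gamma^+$ as the stabilizer of $\Gamma$ for the linear $\operatorname{GL}^+(m,\mathbb{R})$-action on $\mathcal{W}(m)$ described above, and to exploit the infinitesimal action $L_\Gamma\colon\mathfrak{gl}(m,\mathbb{R})\to\mathcal{W}(m)$, $L_\Gamma(A):=\frac{d}{dt}\big|_{t=0}\exp(tA)\Gamma$. Since $G_\Gamma^+$ is the intersection of $\operatorname{GL}^+(m,\mathbb{R})$ with the algebraic set $\{T:T\Gamma=\Gamma\}$, it is a real algebraic Lie group whose Lie algebra is exactly $\ker L_\Gamma$, so $\dim G_\Gamma^+=\dim\ker L_\Gamma=m^2-\operatorname{rank}L_\Gamma$. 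For Assertion~(1) I would use that $A\mapsto L_\Gamma(A)$ is bilinear in $(A,\Gamma)$, hence $\Gamma\mapsto L_\Gamma$ is continuous; because the rank of a matrix is lower semicontinuous, $\dim\ker L_\Gamma$ is upper semicontinuous in $\Gamma$. Therefore $\Gamma_n\to\Gamma$ gives $\dim G_\Gamma^+=\dim\ker L_\Gamma\ge\limsup_n\dim\ker L_{\Gamma_n}=\limsup_n\dim G_{\Gamma_n}^+\ge1$, which is Assertion~(1).

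For Assertion~(2) the first step is to show that the hypothesis forces $\dim G_\Gamma^+=0$. Indeed, if $\dim G_\Gamma^+\ge1$ then $G_\Gamma^+$ contains a nontrivial one-parameter subgroup $t\mapsto\exp(tA)$ with $A\ne0$; its image is isomorphic either to $\mathbb{R}$ or to a circle, and in both cases it contains an element of infinite order, contrary to hypothesis. Since $G_\Gamma^+$ is a real algebraic group it has finitely many components, so $\dim G_\Gamma^+=0$ in fact shows $G_\Gamma^+$ is finite; this is worth recording but is not needed below.

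The heart of the matter is then a weight-lattice estimate. Fix $T\in G_\Gamma^+$ of finite order $N$; being of finite order $T$ is diagonalizable over $\mathbb{C}$, say $Tv_i=\mu_i v_i$ with each $\mu_i$ a root of unity. In the induced eigenbasis $v^a\otimes v^b\otimes v_c$ of $W_\mathbb{C}:=V_\mathbb{C}^*\otimes V_\mathbb{C}^*\otimes V_\mathbb{C}$ the complexified Christoffel tensor reads $\Gamma=\sum\Gamma^c_{ab}\,v^a\otimes v^b\otimes v_c$, and $T\Gamma=\Gamma$ is equivalent to the implication $\Gamma^c_{ab}\ne0\Rightarrow\mu_a^{-1}\mu_b^{-1}\mu_c=1$. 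Regard $T$ as a point of the diagonal torus $\mathbb{T}=(\mathbb{C}^*)^m$ with character lattice $X^*(\mathbb{T})=\mathbb{Z}^m$; the component $v^a\otimes v^b\otimes v_c$ has weight $\omega_{abc}:=-e_a-e_b+e_c$, and I let $\Lambda\subset\mathbb{Z}^m$ be the sublattice generated by the support weights $\{\omega_{abc}:\Gamma^c_{ab}\ne0\}$. A diagonal element $A=\operatorname{diag}(\nu_1,\dots,\nu_m)\in\mathfrak{gl}(m,\mathbb{C})$ acts on the $(a,b,c)$-component by the scalar $\langle\nu,\omega_{abc}\rangle$, so $A\in\ker L_{\Gamma_\mathbb{C}}$ as soon as $\nu$ is orthogonal to every support weight. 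Because $\Gamma$ is real, $\dim_\mathbb{C}\ker L_{\Gamma_\mathbb{C}}=\dim_\mathbb{R}\ker L_\Gamma=\dim G_\Gamma^+=0$; hence no such nonzero $\nu$ exists and $\Lambda$ has full rank $m$.

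Finally I would read off the order bound. Since $\Gamma^c_{ab}\ne0$ forces $\chi_{\omega_{abc}}(T)=\mu_a^{-1}\mu_b^{-1}\mu_c=1$, the element $T$ lies in $\Lambda^\perp:=\{D\in\mathbb{T}:\chi_\omega(D)=1\ \forall\,\omega\in\Lambda\}$, a finite group of order $[\mathbb{Z}^m:\Lambda]$ because $\Lambda$ has full rank; consequently $N\le[\mathbb{Z}^m:\Lambda]$. Each generator $\omega_{abc}=-e_a-e_b+e_c$ has Euclidean length at most $\sqrt5$, so choosing $m$ independent support weights and applying Hadamard's inequality gives $[\mathbb{Z}^m:\Lambda]\le 5^{m/2}$; thus any $c(m)$ of this shape works. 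For the statement $\lim_{m\to\infty}c(m)=\infty$ I would instead exhibit examples: for suitable $m$ one constructs $\Gamma\in\mathcal{W}(p,q)$ carrying a cyclic symmetry of order growing with $m$ while keeping $\dim G_\Gamma^+=0$, by arranging the eigenvalues $\mu_i$ so that $\Lambda$ is of full rank but of large index. The main obstacle is this last point --- producing such high-order examples and, above all, verifying that the accompanying symmetric Ricci tensor $\rho_{s,\Gamma}$ remains non-degenerate --- since the upper bound itself is soft, whereas the unboundedness of $c(m)$ rests on an explicit and somewhat delicate construction.
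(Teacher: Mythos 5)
Your Assertion~(1) is correct and is the same continuity principle the paper uses: the paper extracts a convergent subsequence of unit--norm generators $\xi_n$ of one--parameter subgroups of $G_{\Gamma_n}^+$, which is exactly the compactness form of your upper semicontinuity of $\dim\ker L_\Gamma$. Your upper bound in Assertion~(2) is also correct and takes a genuinely different, and in fact sharper, route. The paper encodes the support of $\Gamma$ in the finitely generated abelian group $A_\vartheta$ with relations $\kappa_i\kappa_j=\kappa_k$ and bounds $\operatorname{order}(T)$ by the maximal order of a torsion element of $A_\vartheta$, deducing finiteness of $A_\vartheta$ by manufacturing an infinite--order symmetry from an infinite--order character (which requires arranging the eigenvalues in conjugate pairs to get a real $T_\lambda$). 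You instead observe that the hypothesis forces $\dim G_\Gamma^+=0$, hence $\ker L_{\Gamma_{\mathbb{C}}}=0$, hence the support weights $-e_a-e_b+e_c$ span and the lattice $\Lambda$ has full rank; $T$ then lies in the finite group $\Lambda^\perp$ and Hadamard gives the explicit bound $c(m)\le 5^{m/2}$, which the paper never makes quantitative. This part of your argument is clean, avoids the reality/conjugate--pair issue, and does not even use non--degeneracy of $\rho_{s,\Gamma}$.

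The genuine gap is the final claim $\lim_{m\rightarrow\infty}c(m)=\infty$, which you explicitly defer: saying that ``for suitable $m$ one constructs $\Gamma$ \dots\ by arranging the eigenvalues so that $\Lambda$ is of full rank but of large index'' states what must be done rather than doing it, and this is the substantive half of Assertion~(2) --- the soft upper bound alone does not show the optimal constant grows. The paper carries the construction out: it builds a torsion free $\Gamma$ on $\mathbb{R}^{3\ell}$ by chaining $\ell$ copies of a $3$--dimensional block whose Ricci tensor is $\operatorname{diag}(a_\mu^2+1,a_\mu^2+1,2)$ (positive definite), coupled by the terms $\Gamma(f_{1,\mu},f_{1,\mu},f^{1,\mu+1})=1$ and $\Gamma(f_{2,\mu},f_{2,\mu},f^{2,\mu+1})=1$. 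Any symmetry compatible with the block structure must satisfy $\lambda_\mu^2=\lambda_{\mu+1}$ cyclically, hence $\lambda_1^{2^\ell}=\lambda_1$, so $\mathbb{Z}_{2^\ell-1}\subset G_\Gamma^+$; choosing the $a_\mu$ so that the eigenvalues of $\rho_2$ relative to $\rho$ are pairwise distinct forces every symmetry to preserve the blocks and rules out elements of infinite order. Without such an explicit family --- together with the verification that $\rho_{s,\Gamma}$ stays non--degenerate and that no infinite--order symmetries appear --- the unboundedness of $c(m)$ remains unproved.
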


\subsection{Definite symmetric Ricci tensor } Let
$$\begin{array}{ll}
\tilde{\mathcal{W}}(p,q):=\left\{\Gamma\in\mathcal{W}(p,q):G_\Gamma^+=\{\operatorname{id}\}\right\},&
\tilde{\mathfrak{W}}^+(p,q):=\tilde{\mathcal{W}}(p,q)/\operatorname{GL}^+(m,\mathbb{R}),\\[0.05in]
\tilde{\mathcal{Z}}(p,q):=\left\{\Gamma\in\mathcal{Z}(p,q):G_\Gamma^+=\{\operatorname{id}\}\right\},&
\tilde{\mathfrak{Z}}^+(p,q):=\tilde{\mathcal{Z}}(p,q)/\operatorname{GL}^+(m,\mathbb{R}).
\end{array}$$
If $(p,q)\in\{(0,m),(m,0)\}$ so $\rho_{s,\Gamma}$ is definite, then one need not consider the generic situation
but can simply exclude the fixed point sets and work directly with the sets $\tilde{\mathcal{W}}(p,q)$ and $\tilde{\mathcal{Z}}(p,q)$.
 We shall prove the following result in Section~\ref{S4}.

\begin{theorem}\label{T1.8}
Let $(p,q)\in\{(m,0),(0,m)\}$. Then:
\begin{enumerate}
\item The action of $\operatorname{GL}^+(m,\mathbb{R})$ on ${\mathcal{W}}(p,q)$ and on ${\mathcal{Z}}(p,q)$ is proper.
\item $\tilde{\mathcal{W}}(p,q)$ and $\tilde{\mathcal{Z}}(p,q)$ are open dense subsets of $\mathcal{W}(p,q)$ and
$\mathcal{Z}(p,q)$, respectively.
\item One can define natural smooth structures on the associated moduli spaces
$\tilde{\mathfrak{W}}^+(p,q)$ and
$\tilde{\mathfrak{Z}}^+(p,q)$
so that  $\tilde{\mathcal{W}}(p,q)\rightarrow\tilde{\mathfrak{W}}^+(p,q)$ and $\tilde{\mathcal{Z}}(p,q)\rightarrow\tilde{\mathfrak{Z}}^+(p,q)$
are smooth principal $\operatorname{GL}^+(m,\mathbb{R})$ bundles.\end{enumerate}\end{theorem}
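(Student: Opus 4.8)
The plan is to deduce all three assertions from properness of the $\operatorname{GL}^+(m,\mathbb{R})$ action on $\mathcal{W}(p,q)$, combined with Theorems~\ref{T1.5}, \ref{T1.6}, and \ref{T1.7}. For Assertion~1 the key observation is that the assignment $r:\Gamma\mapsto\rho_{s,\Gamma}$ is continuous and, by Equation~(\ref{E1.b}) together with the definition of the action, equivariant on the two covariant slots: $\rho_{s,T\Gamma}(\eta,\zeta)=\rho_{s,\Gamma}(T\eta,T\zeta)$, so $r$ intertwines the action on $\mathcal{W}(m)$ with the natural induced action of $\operatorname{GL}^+(m,\mathbb{R})$ on symmetric bilinear forms. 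When $(p,q)\in\{(m,0),(0,m)\}$ the form $\rho_{s,\Gamma}$ is definite, so $r$ maps $\mathcal{W}(p,q)$ into the space $\mathcal{P}$ of definite forms of the appropriate sign. The group $\operatorname{GL}^+(m,\mathbb{R})$ acts transitively on $\mathcal{P}$ with stabilizer $\operatorname{SO}(m)$; since $\operatorname{SO}(m)$ is compact, $\mathcal{P}\cong\operatorname{GL}^+(m,\mathbb{R})/\operatorname{SO}(m)$ and the action on $\mathcal{P}$ is proper. Properness then pulls back along $r$: if $\Gamma_n\to\Gamma$ and $T_n\Gamma_n\to\tilde\Gamma$ in $\mathcal{W}(p,q)$, then $r(\Gamma_n)\to r(\Gamma)$ while the images $r(T_n\Gamma_n)\to r(\tilde\Gamma)$ lie on the $T_n$-orbits of $r(\Gamma_n)$, so a subsequence of $T_n$ converges. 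This gives Assertion~1 on $\mathcal{W}(p,q)$, and the action restricts to a proper action on the invariant subset $\mathcal{Z}(p,q)$. It is precisely here that definiteness is essential: in the indefinite case the stabilizer $\operatorname{O}(p,q)$ is non-compact and this argument fails, which is why one must otherwise pass to the generic subsets of Theorem~\ref{T1.6}.

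For the density half of Assertion~2 I would note that the fixed-point-free locus $\mathcal{W}(p,q;\mathfrak{P}_m)$ of Theorem~\ref{T1.6} consists of points with $G_\Gamma^+=\{\operatorname{id}\}$, so $\mathcal{W}(p,q;\mathfrak{P}_m)\subseteq\tilde{\mathcal{W}}(p,q)$; as the former is dense in $\mathcal{W}(p,q)$, so is $\tilde{\mathcal{W}}(p,q)$, and likewise for $\mathcal{Z}$. For openness I would show the complement $\{\Gamma:G_\Gamma^+\ne\{\operatorname{id}\}\}$ is closed. Given $\Gamma_n\to\Gamma$ with each $G_{\Gamma_n}^+$ nontrivial, I split into two cases. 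If a subsequence satisfies $\dim G_{\Gamma_n}^+\ge1$, then $\dim G_\Gamma^+\ge1$ by Theorem~\ref{T1.7}(1), so $G_\Gamma^+\ne\{\operatorname{id}\}$. Otherwise $G_{\Gamma_n}^+$ is eventually a finite group; choosing $\operatorname{id}\ne T_n\in G_{\Gamma_n}^+$, Theorem~\ref{T1.7}(2) bounds the order of $T_n$ by $c(m)$. Since $T_n\Gamma_n=\Gamma_n\to\Gamma$, properness yields a convergent subsequence $T_{n_k}\to T$ with $T\Gamma=\Gamma$, hence $T\in G_\Gamma^+$.

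The one genuinely delicate point, which I expect to be the main obstacle, is ruling out $T=\operatorname{id}$ in this last step, i.e. showing that nontrivial elements of bounded finite order cannot accumulate at the identity. Here I would use that a finite-order element is diagonalizable over $\mathbb{C}$ with root-of-unity eigenvalues: if $T_n\ne\operatorname{id}$ has order at most $c(m)$, then $T_n$ has an eigenvalue $\lambda_n\ne1$ which is a $k$-th root of unity for some $k\le c(m)$. Since the set of such roots of unity is finite, there is a uniform gap $\delta(m)>0$ with $|\lambda_n-1|\ge\delta(m)$, whence $\|T_n-\operatorname{id}\|\ge\delta(m)$. Passing to the limit gives $\|T-\operatorname{id}\|\ge\delta(m)>0$, so $T\ne\operatorname{id}$ and $G_\Gamma^+\ne\{\operatorname{id}\}$; thus $\tilde{\mathcal{W}}(p,q)$ is open, and the same argument applies verbatim to $\tilde{\mathcal{Z}}(p,q)$.

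Finally, Assertion~3 is immediate from the preceding assertions and Theorem~\ref{T1.5}: the sets $\tilde{\mathcal{W}}(p,q)$ and $\tilde{\mathcal{Z}}(p,q)$ are open by Assertion~2, hence smooth manifolds, they are $\operatorname{GL}^+(m,\mathbb{R})$-invariant, the action on them is fixed-point free by definition, and it is proper as a restriction of the proper action of Assertion~1. Theorem~\ref{T1.5} therefore endows the quotients $\tilde{\mathfrak{W}}^+(p,q)$ and $\tilde{\mathfrak{Z}}^+(p,q)$ with natural smooth structures making the projections principal $\operatorname{GL}^+(m,\mathbb{R})$ bundles.
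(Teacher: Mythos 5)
Your proof is correct, and while its skeleton matches the paper's (properness from definiteness, openness of the trivial-isotropy locus via properness, density from Theorem~\ref{T1.6}, and Theorem~\ref{T1.5} to conclude), the two steps carrying real content are handled differently. For Assertion~1 the paper proves a reduction lemma (Lemma~\ref{L4.1}): by an explicit Gram--Schmidt normalization it arranges $\rho_{\Gamma_n}=\rho_{T_n\Gamma_n}=\rho_0$, forcing $T_n\in\operatorname{SO}(\rho_0)$, and then quotes compactness of $\operatorname{SO}(\rho_0)$ in the definite case. Your observation that $\Gamma\mapsto\rho_{s,\Gamma}$ is a continuous equivariant map into the homogeneous space $\operatorname{GL}^+(m,\mathbb{R})/\operatorname{SO}(m)$ of definite forms, along which properness pulls back, is the same idea packaged more conceptually; it is arguably cleaner and makes transparent why definiteness is the operative hypothesis. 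The genuine divergence is in ruling out $T=\operatorname{id}$ in Assertion~2. The paper does this with no order information at all: it replaces $T_n$ by a power $T_n^{k_n}$, with $k_n$ chosen via the exponential chart on $\mathfrak{so}(\rho_0)$ so that $T_n^{k_n}$ lands in a fixed annulus $\mathcal{O}_2-\mathcal{O}_1$ about the identity, and passes to the limit there. You instead split off the positive-dimensional case with Theorem~\ref{T1.7}~(1) and, in the zero-dimensional case, invoke the uniform order bound $c(m)$ of Theorem~\ref{T1.7}~(2) to obtain a root-of-unity eigenvalue gap $\|T_n-\operatorname{id}\|\ge\delta(m)$. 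Both arguments work; yours imports Theorem~\ref{T1.7} (proved independently in Section~\ref{S3}, so there is no circularity), whereas the paper's power trick is self-contained and would survive even without a uniform order bound. One small point you should make explicit: before applying Theorem~\ref{T1.7}~(2) you need to know that $\dim\{G_{\Gamma_n}^+\}=0$ forces $G_{\Gamma_n}^+$ to be finite (so that ``no element has infinite order'' holds); this is true here because $G_{\Gamma_n}^+$ is a closed zero-dimensional subgroup of the compact group $\operatorname{SO}(\rho_{s,\Gamma_n})\cong\operatorname{SO}(m)$ --- definiteness is used again, and the claim would fail in the indefinite setting. Finally, you supply the density argument ($\mathcal{W}(p,q;\mathfrak{P}_m)\subseteq\tilde{\mathcal{W}}(p,q)$ by Lemma~\ref{L2.1}~(1c)) which the paper's proof of Assertion~2 leaves implicit.
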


\subsection{The higher signature setting}
In Section~\ref{S5}, we will prove the following result which shows that Assertion~1 of Theorem~\ref{T1.8} fails in
 the higher signature setting.

\begin{theorem}\label{T1.9}
Let $p\ge1$, let $q\ge1$, and let $p+q\ge3$.
 There exists $\Gamma\in\mathcal{Z}(p,q)$ so that
$G_\Gamma^+$ is not compact.  Consequently, the action of $\operatorname{GL}^+(m,\mathbb{R})$ on $\mathcal{Z}(p,q)$
or on $\mathcal{W}(p,q)$ is not proper.\end{theorem}

\subsection{Two dimensional geometry}
The two dimensional setting is relatively easy to examine. Since it informs many of the constructions we will employ
in the 3-dimensional setting, it seems worth while discussing it in a bit of detail; a construction which will be used in the proof
of Theorem~\ref{T1.7}~(2) will renter in analysis of the 2-dimensional setting. We introduce the following basic structure:
\begin{definition}\label{D1.10}\rm
Let $\Gamma_2$ be the structure
$$\begin{array}{llllll}
\Gamma_{11}{}^1=\frac1{\sqrt2},&\Gamma_{11}{}^2=0,&
\Gamma_{12}{}^1=0,&\Gamma_{12}{}^2=-\frac1{\sqrt2},&
\Gamma_{22}{}^1=-\frac1{\sqrt2},&
\Gamma_{22}{}^2=0.
\end{array}$$
We obtain $\rho_\Gamma=\operatorname{diag}(-1,-1)$.
\end{definition}

The following result was proved in \cite{BGGP16a} using different methods;  we give a different
proof in Section~\ref{S6} to introduce arguments we will use subsequently.

\begin{theorem}\label{T1.11}
Adopt the notation established above.
\begin{enumerate}
\item Let $(p,q)=(1,1)$ or $(p,q)=(0,2)$. Then the action of $\operatorname{GL}^+(2,\mathbb{R})$ on $\mathcal{Z}(p,q)$ is
fixed point free and proper. Thus $\mathcal{Z}(p,q)\rightarrow\mathfrak{Z}^+(p,q)$
is a principal $\operatorname{GL}^+(2,\mathbb{R})$ bundle over a real analytic surface.
\item Let $(p,q)=(2,0)$. If $G_\Gamma^+\ne\{\id \}$, then
$G_\Gamma^+=\mathbb{Z}_3$ and
$\Gamma$ is isomorphic to the structure $\Gamma_2$ of Definition~\ref{D1.10}. $\operatorname{GL}^+(2,\mathbb{R})$
acts properly on $\mathcal{Z}(2,0)$ and
$\mathcal{Z}(2,0)\rightarrow\mathfrak{Z}_{2,0}^+-[\Gamma_2]$ is a principal $\operatorname{GL}(2,\mathbb{R})$ bundle
over a real analytic surface once we remove the exceptional orbit corresponding to $\Gamma_2$.
\end{enumerate}
\end{theorem}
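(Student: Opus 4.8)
The plan is to use the symmetric Ricci tensor as a reduction of the structure group and then to extract everything from a weight decomposition under the resulting circle. Since $T\Gamma=\Gamma$ forces $T^{t}\rho_\Gamma T=\rho_\Gamma$, every symmetry lies in the orientation preserving isometry group of $\rho_\Gamma$; because $\rho_\Gamma$ is non-degenerate this group is conjugate in $\operatorname{GL}^+(2,\mathbb{R})$ to $\operatorname{SO}(2)$ when $(p,q)\in\{(2,0),(0,2)\}$ and to $\operatorname{SO}(1,1)$ when $(p,q)=(1,1)$. I would fix a normal form $g_{p,q}$ for the Ricci tensor, set $H:=\operatorname{Isom}^+(g_{p,q})$ and $F:=\{\Gamma\in\mathcal{Z}(2):\rho_\Gamma=g_{p,q}\}$. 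As the Ricci map $\Gamma\mapsto\rho_\Gamma$ is $\operatorname{GL}^+(2,\mathbb{R})$ equivariant and $\operatorname{GL}^+(2,\mathbb{R})$ is transitive on symmetric forms of signature $(p,q)$, one has $\mathcal{Z}(p,q)\cong\operatorname{GL}^+(2,\mathbb{R})\times_H F$, so questions about orbits, stabilizers, and properness for $\operatorname{GL}^+(2,\mathbb{R})$ on $\mathcal{Z}(p,q)$ become the corresponding questions for $H$ acting on $F$.

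The crucial input is the decomposition of $\mathcal{Z}(2)=S^2(\mathbb{R}^2)\otimes\mathbb{R}^2$ under $\operatorname{SO}(2)$. Viewing $\mathbb{R}^2$ as the weight $\pm1$ real representation, $S^2(\mathbb{R}^2)$ carries weights $0$ and $\pm2$, so $\mathcal{Z}(2)$ splits into two copies of the weight $\pm1$ representation and one copy of the weight $\pm3$ representation; notably there is \emph{no} trivial summand. For $(p,q)=(1,1)$, $H=\operatorname{SO}(1,1)$ scales a weight $w$ vector by $e^{wt}$; since no weight vanishes and the only finite order element of $\operatorname{SO}(1,1)$ is $-\id$ (which acts as $-\id$ on $\mathcal{Z}(2)$ and so fixes only $\Gamma=0$), the action on $\mathcal{Z}(1,1)$ is fixed point free. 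For the definite cases a rotation by $\theta$ fixes $\Gamma$ iff it fixes each nonzero weight component; this forces the weight $\pm1$ parts to vanish and leaves only the weight $\pm3$ part, on which the stabilizer is exactly $\mathbb{Z}_3$. The asymmetry between $(2,0)$ and $(0,2)$ then comes from the sign of $\rho$ on this weight $\pm3$ locus: by equivariance $\rho$ is a negative multiple of $g$ there, as recorded by $\rho_{\Gamma_2}=\operatorname{diag}(-1,-1)$ in Definition~\ref{D1.10}, so nontrivial stabilizers occur only in signature $(2,0)$. The pure weight $\pm3$ locus with fixed Ricci is a single $\operatorname{SO}(2)$ orbit, hence a single $\operatorname{GL}^+(2,\mathbb{R})$ orbit, which I would identify with $[\Gamma_2]$ by checking directly that $\Gamma_2$ has vanishing weight $\pm1$ projection (equivalently, that it is $\mathbb{Z}_3$ invariant).

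For properness the definite cases are immediate: $H=\operatorname{SO}(2)$ is compact, so $\operatorname{GL}^+(2,\mathbb{R})$ acts properly on the space $\operatorname{GL}^+(2,\mathbb{R})/\operatorname{SO}(2)$ of definite forms, and properness pulls back along the equivariant Ricci map to $\mathcal{Z}(2,0)$ and $\mathcal{Z}(0,2)$. The main obstacle is the Lorentzian case, where $H=\operatorname{SO}(1,1)$ is non-compact and this pull-back is unavailable. Here I would work directly on the fibre: suppose $\Gamma_n\to\Gamma$ in $F$ and boosts with parameters $t_n\to+\infty$ carry $\Gamma_n$ to structures that remain in $F$ and converge to some $\tilde\Gamma$. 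Since the weight $w$ coordinate is scaled by $e^{wt_n}$, the coordinates of $\tilde\Gamma$ in weights $w<0$ vanish, so $\tilde\Gamma$ is supported in strictly positive weights. But the weight $0$ part of $\rho$ is a sum of products of coordinates whose weights cancel, so any structure supported in strictly positive weights has vanishing weight $0$ Ricci, contradicting that $\rho_{\tilde\Gamma}=g_{1,1}$ is non-degenerate. The symmetric argument rules out $t_n\to-\infty$; hence $t_n$ is bounded and the action is proper.

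Finally I would assemble the pieces. In the cases $(1,1)$ and $(0,2)$ the action is smooth, fixed point free, and proper, so Theorem~\ref{T1.5} produces a principal $\operatorname{GL}^+(2,\mathbb{R})$ bundle; the base has dimension $\dim\mathcal{Z}(2)-\dim\operatorname{GL}^+(2,\mathbb{R})=6-4=2$ and is real analytic because the entire construction is polynomial. In the case $(2,0)$ the only non-free orbit is $[\Gamma_2]$ with isotropy $\mathbb{Z}_3$; the action remains proper, and after deleting this single exceptional orbit it is free and proper, so Theorem~\ref{T1.5} again yields a principal bundle over a real analytic surface. I expect the Lorentzian properness step to be the only genuinely delicate point, since every other assertion reduces to the representation theory of the circle.
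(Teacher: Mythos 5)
Your proposal is correct and follows essentially the same route as the paper: reduction to $\operatorname{SO}(\rho_{s,\Gamma})$ via the invariant Ricci metric (the paper's Lemma~\ref{L4.1} argument), followed by the weight decomposition of $\mathcal{Z}(2)$ into weights $\pm1,\pm1,\pm3$ with no trivial summand (which is exactly the paper's complexification $\{\Gamma_{zz}{}^z,\Gamma_{z\bar z}{}^z,\Gamma_{\bar z\bar z}{}^z\}$), giving fixed-point-freeness away from the pure weight $\pm3$ locus, the $\mathbb{Z}_3$ stabilizer and the single orbit $[\Gamma_2]$ there, and properness from compactness of $\operatorname{SO}(2)$ respectively the weight argument for $\operatorname{SO}(1,1)$. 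The one spot to tighten is the $\operatorname{SO}(1,1)$ fixed-point claim: besides the identity component and $-\operatorname{id}$ you must also dispose of $\tilde T_a=\operatorname{diag}(a,a^{-1})$ with $a<0$, $a\ne-1$, which is immediate since all weights $\varepsilon$ are odd, so $\tilde T_a$ scales a weight $\varepsilon$ component by $a^{\varepsilon}\ne1$ for every real $a\ne1$ --- exactly the paper's Equation~(\ref{E6.b}) argument.
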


\subsection{Three dimensional geometry}
We now restrict to dimension $m=3$ and the torsion free setting in order to illustrate the
possible isotropy subgroups. The examples where $\dim\{G_\Gamma^+\}>0$ form two families given in (1) and (2) below.
The remaining structure groups are all finite and comprise one of the following
$\{\mathbb{Z}_3$, $\mathbb{Z}_2\oplus\mathbb{Z}_2$, $\mathbb{Z}_2$,
$s_3$, $a_4\}$; they appear in the two families given in (3) and (4) below. Thus one obtains that the constant $c(3)=3$ in Theorem~\ref{T1.7}.
We will prove the following result in Section~\ref{S7}:

\begin{theorem}\label{T1.12}
Let $\Gamma\in\mathcal{Z}(p,q)$ for $p+q=3$.
Assume $G_\Gamma^+\ne\{\operatorname{id}\}$.
We can make a linear change of coordinates so that one of the following 4 possibilities holds:
\begin{enumerate}
\item There exist $(a,b,c,d)\in\mathbb{R}^4$ so
$G_\Gamma^+=\operatorname{SO}(1,1)$,
$\Gamma_{12}{}^3=a$, $\Gamma_{13}{}^1=b$,
$\Gamma_{23}{}^2=c$, $\Gamma_{33}{}^3=d$, and
$\rho=ad(e^1\otimes e^2+e^2\otimes e^1)+(-b^2+bd+c(-c+d))e^3\otimes e^3$.
We require $ad\ne0$ and $-b^2+bd+c(-c+d)\ne0$.
\item There exist $(a,b,c,d)\in\mathbb{R}^4$ so
$G_\Gamma^+=\operatorname{SO}(2)$,
$\Gamma_{11}{}^3=a$, $\Gamma_{13}{}^1=b$, $\Gamma_{13}{}^2=c$, $\Gamma_{22}{}^3=a$,
$\Gamma_{23}{}^1=-c$, $\Gamma_{23}{}^2=b$, $\Gamma_{33}{}^3=d$,
$\rho_\Gamma=\operatorname{diag}(ad,ad,2(bd-b^2+c^2))$. We require
$ad\ne0$ and $bd-b^2+c^2\ne0$.
\item The group $G_\Gamma^+$ is finite, there exists an element of order $3$ in $G_\Gamma^+$, and
there exist $(a,b,c,d)\in\mathbb{R}^4$ so
$\Gamma_{11}{}^1=1$, $\Gamma_{11}{}^3=a$, $\Gamma_{12}{}^2=-1$, $\Gamma_{13}{}^1=b$, $\Gamma_{13}{}^2=c$,
$\Gamma_{22}{}^1=-1$, $\Gamma_{22}{}^3=a$, $\Gamma_{23}{}^1=-c$, $\Gamma_{23}{}^2=b$, $\Gamma_{33}{}^3=d$, and
$\rho_\Gamma=(ad-2)(e^1\otimes e^1+e^2\otimes e^2+2(bd-b^2+c^2))e^3\otimes e^3$. We require $ad-2\ne0$ and
$-b^2+c^2+bd\ne0$.
We have $G_\Gamma^+=\mathbb{Z}_3$ except for the following exceptional structures which are given up to isomorphism by:
\begin{enumerate}\item
 $a=0$, $b=0$, $c=1$, $d=0$, and $G_\Gamma^+=s_3$.
\item $c=0$, $a=b=\pm\frac1{\sqrt2}$, $d=\pm\sqrt{2}$,
and $G_\Gamma^+=a_4$.
\end{enumerate}\item
The group $G_\Gamma^+$ is finite and all elements of $G_\Gamma^+$ have order 2. There are two structures up to isomorphism:
\begin{enumerate}
\item
$G_\Gamma^+=\mathbb{Z}_2\oplus\mathbb{Z}_2$,
$\Gamma_{12}{}^3=1$, $\Gamma_{13}{}^2=1$, $\Gamma_{23}{}^1=-1$, and\newline
$\rho=-2(e^1\otimes e^1+e^2\otimes e^2)+2e^3\otimes e^3$.
\item $G_\Gamma^+=\mathbb{Z}_2$, $\Gamma_{ij}{}^k=0$ unless the index $3$ appears an odd number
of times,
$\Gamma_{11}{}^3 = a$, $\Gamma_{12}{}^3 = b$, $\Gamma_{13}{}^1 = c$, $\Gamma_{13}{}^2 = d$,
$\Gamma_{21}{}^3 = b$, $\Gamma_{22}{}^3 = e$, $\Gamma_{23}{}^1 = f$, $\Gamma_{23}{}^2 = g$,
$\Gamma_{31}{}^1 = c$, $ \Gamma_{31}{}^2 = d$, $\Gamma_{32}{}^1 = f$, $\Gamma_{32}{}^2 = g$,
$\Gamma_{33}{}^3 = h$.
$\rho_{11}=-2 b d + a (-c + g + h)$,\quad
$\rho_{12}=\rho_{21}=-d e - a f + b h$,
$\rho_{33}=-c^2 - 2 d f + c h + g (-g + h)$. One requires $\det(\rho)\ne0$.
\end{enumerate}\end{enumerate}\end{theorem}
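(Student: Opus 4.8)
My plan is to exploit the fact that the symmetry group is forced to lie inside the orthogonal group of the Ricci tensor. Since $\rho_\Gamma$ is built from $\Gamma$ in a $\operatorname{GL}^+(3,\mathbb{R})$-equivariant way, every $T\in G_\Gamma^+$ satisfies $\rho_\Gamma(Tx,Ty)=\rho_{T\Gamma}(x,y)=\rho_\Gamma(x,y)$, so that $G_\Gamma^+\subseteq\operatorname{SO}(\rho_\Gamma)\cong\operatorname{SO}(p,q)$; here the non-degeneracy of $\rho_\Gamma$ is used in an essential way. Because $G_\Gamma^+=\{T:T\Gamma=\Gamma\}$ is cut out by polynomial equations, it is a closed, hence Lie, subgroup of $\operatorname{SO}(p,q)$, with Lie algebra $\mathfrak g=\{X\in\mathfrak{so}(p,q):X\cdot\Gamma=0\}$, where $X\cdot\Gamma$ denotes the infinitesimal action as a derivation in the two lower and one upper index. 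Throughout I invoke Theorem~\ref{T1.3} to act by $\operatorname{GL}^+(3,\mathbb{R})$, which lets me normalize $\rho_\Gamma$ and conjugate a chosen generator of $G_\Gamma^+$ into standard position.

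I would split the argument according to $\dim G_\Gamma^+$. In the positive-dimensional case, choose $0\ne X\in\mathfrak g\subseteq\mathfrak{so}(p,q)$. Up to conjugacy and scale, $\mathfrak{so}(3)$ has a single nonzero orbit, while $\mathfrak{so}(2,1)$ has the familiar elliptic, hyperbolic, and parabolic orbits. For each representative $X$ I solve the linear system $X\cdot\Gamma=0$ and impose $\det(\rho_\Gamma)\ne0$. The elliptic generator produces the $\operatorname{SO}(2)$-family and its normal form, giving Assertion~(2); the hyperbolic generator produces the $\operatorname{SO}(1,1)$-family, giving Assertion~(1). I then rule out the parabolic case and every possibility with $\dim\mathfrak g\ge2$ by verifying that the corresponding kernels force $\rho_\Gamma$ to be degenerate; for $\mathfrak{so}(3)$ this is immediate from the absence of any nonzero $\operatorname{SO}(3)$-invariant element of $\mathcal{Z}(3)$.

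For the finite case, every nonidentity element is an elliptic element of $\operatorname{SO}(p,q)$ and is therefore conjugate to a rotation $R_{2\pi/n}$ by angle $2\pi/n$ in a definite $2$-plane, fixing a complementary axis. Decomposing $\mathcal{Z}(3)$ into weight spaces for the circle through this rotation, one checks that every weight lies in $\{-3,\dots,3\}$; hence for $n\ge4$ the only $R_{2\pi/n}$-invariant vectors are the weight-zero ones, which are exactly the $\operatorname{SO}(2)$-invariants of Assertion~(2) and so have infinite symmetry group, contradicting finiteness. Thus every element has order $2$ or $3$, which also establishes $c(3)=3$. If an order-$3$ element exists, I conjugate it to $\operatorname{diag}(R_{2\pi/3},1)$, solve $T\Gamma=\Gamma$ using the weight-$\{-3,0,3\}$ invariants to obtain precisely the four-parameter family of Assertion~(3), and then search within this family for additional symmetries to see when the group is enhanced from $\mathbb{Z}_3$ to $s_3$ or $a_4$. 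If no order-$3$ element exists, every nonidentity element is an involution, so $G_\Gamma^+\cong\mathbb{Z}_2$ or $\mathbb{Z}_2\oplus\mathbb{Z}_2$; conjugating the involutions to $\operatorname{diag}(-1,-1,1)$, and for the Klein four-group adjoining $\operatorname{diag}(-1,1,-1)$, the weight-$\{-2,0,2\}$ (respectively the triple sign-parity) invariance conditions give the normal forms of Assertion~(4).

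The main obstacle is the enhancement analysis inside Assertion~(3): within the generic $\mathbb{Z}_3$-invariant family I must determine all additional $T\in\operatorname{GL}^+(3,\mathbb{R})$ with $T\Gamma=\Gamma$, which amounts to solving a coupled polynomial system in the entries of $T$ and the parameters $(a,b,c,d)$, identifying the resulting finite groups, and then using the residual normalizer to reduce the exceptional parameter loci to the single representatives $a=b=0$, $c=1$, $d=0$ for $s_3$ and $c=0$, $a=b=\pm\tfrac1{\sqrt2}$, $d=\pm\sqrt2$ for $a_4$. By contrast, the positive-dimensional and all-involution cases reduce to linear algebra and are comparatively routine, and verifying the displayed Ricci tensors together with the non-degeneracy inequalities in each case is a direct substitution into~(\ref{E1.b}).
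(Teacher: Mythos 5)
Your overall strategy --- forcing $G_\Gamma^+$ into $\operatorname{SO}(\rho_\Gamma)$, decomposing $\mathcal{Z}(3)$ into weight spaces with weights in $\{-3,\dots,3\}$, and reading off normal forms in the continuous, order-$3$, and order-$2$ cases --- is essentially the route the paper takes, and the pieces you describe in detail (the $\operatorname{SO}(2)$ and $\operatorname{SO}(1,1)$ families, the absence of nonzero $\operatorname{SO}(3)$-invariants in $S^2(\mathbb{R}^3)^*\otimes\mathbb{R}^3$, the sign-parity analysis for involutions) are correct. But there are two genuine gaps. First, your case division ``positive-dimensional or finite'' is not exhaustive: a priori $G_\Gamma^+$ could be zero-dimensional yet infinite, generated by a hyperbolic element $\operatorname{diag}(\lambda,\lambda^{-1},1)$ with $\lambda\ne\pm1$ or by a parabolic (unipotent) element of $\operatorname{SO}(2,1)$. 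The hyperbolic case is easy (your weight argument kills every nonzero-weight component, so the full $\operatorname{SO}(1,1)$ acts and the group is positive-dimensional after all), but the parabolic case --- an element whose only real eigenvectors are null --- is not reached by your Lie-algebra orbit analysis until you first show that a single unipotent element of $G_\Gamma^+$ drags a whole one-parameter unipotent subgroup into $G_\Gamma^+$ (for instance because $\exp(tX)\Gamma$ is polynomial in $t$ and agrees with $\Gamma$ at every integer). This is precisely the content of the paper's Lemma~\ref{L7.1}, whose elimination of the null-axis possibility is the longest computation in Section~\ref{S7}; your proposal silently assumes every symmetry has a non-null fixed axis.

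Second, in Assertion~(3) you correctly identify the enhancement analysis as the main obstacle but offer no method beyond ``solving a coupled polynomial system,'' and that system is not tractable without an additional structural input. The paper supplies it as Lemma~\ref{L7.3}: if $T_1$, $T_2$, and $T_1T_2$ all have order $3$, then $T_1T_2^2$ is the identity or has order $2$. Combined with the non-null-axis lemma, this is what allows one to assume that any symmetry beyond $\langle T\rangle\cong\mathbb{Z}_3$ can be taken to be an involution, pins the possible groups down to $\mathbb{Z}_3$, $s_3$, $a_4$, and reduces the parameter search to the three possible positions of the $-1$-eigenplane of the involution relative to the rotation axis of $T$. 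One could instead invoke the classification of finite subgroups of $\operatorname{SO}(3)$ and of $\operatorname{SO}(2,1)$ of exponent dividing $6$, but some such input is required; without it your plan does not isolate the exceptional loci carrying $s_3$ and $a_4$, which is where the real content of Assertion~(3) lies.
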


\subsection{The unoriented category} There are similar results in the unoriented category. One does not
assume $\mathcal{M}$ is oriented and one replaces the structure
group $\operatorname{GL}^+(m,\mathbb{R})$ by the full general linear group. Theorems~\ref{T1.6}--\ref{T1.9}
extend to this context with only the appropriate minor modifications of notation. The corresponding analysis of Theorem~\ref{T1.12}
in dimension $3$
would become much more complicated and we have not attempted it for that reason nor have we considered torsion in these results
for the same reason as our purpose was to be illustrative rather than exhaustive. We have chosen to work in the smooth category;
however all the structures in question and the relevant morphisms are in fact real analytic.

\section{Generic properties}\label{S2}
We introduce the following tensors. Let
\begin{equation}\label{E2.a}
\omega:=\Gamma_{ij}{}^jdx^i,\quad
\rho_{1,\Gamma}=\Gamma_{in}{}^i\Gamma_{jk}{}^ndx^j\otimes dx^k,\text{ and }\rho_{2,\Gamma}=\Gamma_{jn}{}^i\Gamma_{ik}{}^ndx^j\otimes dx^k\,.
\end{equation}
Note that $\rho=\rho_1-\rho_2$. If $\rho_{s,\Gamma}$ is non-degenerate, then
$\rho_{s,\Gamma}(\varepsilon):=\rho_{s,\Gamma}+\varepsilon\rho_{2,s,\Gamma}$ is invertible for small $\varepsilon$.
Let $\varrho_{s,\Gamma}^{i\ell}(\varepsilon)$ be the components of the inverse matrix; this defines the dual symmetric non-degenerate
2-tensor
on $(\mathbb{R}^m)^*$. As $\rho_{s,\Gamma}(\varepsilon)$ is real analytic in $\varepsilon$, we sum over repeated indices to expand
$$
\rho_{s,\Gamma}^{i\ell}(\varepsilon)\Gamma_{ij}{}^je_\ell=\sum_{n=0}^\infty\xi_{\Gamma,n}\varepsilon^n
\text{ where }\xi_{\Gamma,n}\in\mathbb{R}^m\,.
$$
We begin the proof of Theorem~\ref{T1.6} with the following observation.
\begin{lemma}\label{L2.1} There exists a polynomial $\mathfrak{P}_m=\mathfrak{P}_m(\Gamma)$ and an integer $\kappa_m$ so that:
\begin{enumerate}
\item If $\Gamma\in \mathcal{W}(m)$ and if $\mathfrak{P}_m(\Gamma)\ne0$, then
\begin{enumerate}\item $\rho_{s,\Gamma}$ is non-degenerate,
\item $\mathcal{B}_\Gamma:=\{\xi_{\Gamma,0},\xi_{\Gamma,1},\dots,\xi_{\Gamma,m-1}\}$ is a basis for $\mathbb{R}^m$.
\item $G_\Gamma^+=\{\operatorname{id}\}$.
\end{enumerate}
\item If $T\in\operatorname{GL}^+(m,\mathbb{R})$ and if $\Gamma\in\mathcal{W}(m)$, then
$\mathfrak{P}_m(\Gamma)=\det(T)^{\kappa(m)}\mathfrak{P}_m(T\Gamma)$.
\item There exists $\Gamma\in \mathcal{Z}(m)$ so that $\mathfrak{P}_m(\Gamma)\ne0$.
\end{enumerate}
\end{lemma}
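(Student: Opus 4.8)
The plan is to first put the vectors $\xi_{\Gamma,n}$ into closed form, which makes the whole construction transparent. Write $R:=\rho_{s,\Gamma}$, $S:=\rho_{2,s,\Gamma}$ for the symmetric matrices and let $\omega$ denote the covector with components $\omega_i=\Gamma_{ij}{}^j$. Since $R+\varepsilon S=R(I+\varepsilon R^{-1}S)$, the expansion $(R+\varepsilon S)^{-1}\omega=\sum_{n\ge0}(-\varepsilon)^n(R^{-1}S)^nR^{-1}\omega$ identifies $\xi_{\Gamma,n}=(-1)^nA^n\xi_0$, where $A:=R^{-1}S$ and $\xi_0:=R^{-1}\omega$. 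Thus $\mathcal{B}_\Gamma$ is a basis of $\mathbb{R}^m$ if and only if $\xi_0$ is a cyclic vector for $A$, i.e. the Krylov determinant $\Delta:=\det[\,\xi_0\mid A\xi_0\mid\cdots\mid A^{m-1}\xi_0\,]$ is nonzero. Because $R^{-1}=(\det R)^{-1}\operatorname{adj}(R)$, the column $A^n\xi_0$ carries the denominator $(\det R)^{n+1}$, so $(\det R)^{m(m+1)/2}\Delta$ is a polynomial in $\Gamma$. I would therefore define
\[
\mathfrak{P}_m(\Gamma):=\det(\rho_{s,\Gamma})\cdot(\det\rho_{s,\Gamma})^{m(m+1)/2}\,\Delta,
\]
a polynomial on $\mathcal{W}(m)$ that is visibly divisible by $\det\rho_{s,\Gamma}$.

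With this choice Assertion~(1) is essentially formal. If $\mathfrak{P}_m(\Gamma)\ne0$ then $\det\rho_{s,\Gamma}\ne0$, giving (1a); since $\det\rho_{s,\Gamma}\ne0$ the remaining factor forces $\Delta\ne0$, which is (1b); and (1c) will follow from (1b) once equivariance is in hand. For equivariance and Assertion~(2), observe that $\rho_{s,\Gamma}$, $\rho_{2,s,\Gamma}$ and $\omega$ are obtained from $\Gamma$ purely by contraction, so under $T\in\operatorname{GL}^+(m,\mathbb{R})$ they transform as the natural tensors they are: $R\mapsto T^tRT$, $S\mapsto T^tST$, $\omega\mapsto T^t\omega$. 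Hence $A=R^{-1}S\mapsto T^{-1}AT$ and $\xi_0=R^{-1}\omega\mapsto T^{-1}\xi_0$, and the closed form gives $\xi_{T\Gamma,n}=T^{-1}\xi_{\Gamma,n}$ for all $n$. The matrix with columns $\xi_{\Gamma,n}$ is thus left-multiplied by $T^{-1}$, so $\Delta$ picks up a factor $\det(T)^{-1}$, while $\det\rho_{s,T\Gamma}=\det(T)^2\det\rho_{s,\Gamma}$. Combining the powers yields $\mathfrak{P}_m(T\Gamma)=\det(T)^{m^2+m+1}\mathfrak{P}_m(\Gamma)$, which is Assertion~(2) with $\kappa_m=-(m^2+m+1)$. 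Finally, if $T\in G_\Gamma^+$ then $T\Gamma=\Gamma$, so $\xi_{\Gamma,n}=\xi_{T\Gamma,n}=T^{-1}\xi_{\Gamma,n}$; when $\mathcal{B}_\Gamma$ is a basis this says $T$ fixes a basis of $\mathbb{R}^m$, whence $T=\id$, proving (1c).

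The main obstacle is Assertion~(3): exhibiting a single \emph{torsion-free} $\Gamma\in\mathcal{Z}(m)$ at which $\mathfrak{P}_m$ does not vanish, i.e. with $\rho_{s,\Gamma}$ nondegenerate and $\xi_0$ cyclic for $A$. This is real content and not a formality, since $\mathcal{Z}(m)$ is a proper subspace of $\mathcal{W}(m)$ and one must check that both generic conditions can be met simultaneously within the torsion-free locus. My plan is to write down an explicit parametrized family in $\mathcal{Z}(m)$, compute $\omega$, $\rho_{s,\Gamma}$, and $\rho_{2,s,\Gamma}$ from (\ref{E1.b}) and (\ref{E2.a}), and tune the parameters so that $A=\rho_{s,\Gamma}^{-1}\rho_{2,s,\Gamma}$ has $m$ distinct eigenvalues with $\xi_0$ having a nonzero component along each eigendirection; cyclicity, and hence $\Delta\ne0$, then follows. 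As only non-vanishing at one point is required, it suffices to verify this at a single convenient value of the parameters, reducing (3) to a finite, if tedious, computation.
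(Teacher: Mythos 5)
Your treatment of Assertions~(1) and~(2) is essentially the paper's argument in cleaner notation: the Neumann expansion gives $\xi_{\Gamma,n}=(-1)^nA^n\xi_0$ with $A=\rho_{s,\Gamma}^{-1}\rho_{2,s,\Gamma}$ and $\xi_0=\rho_{s,\Gamma}^{-1}\omega$, the denominators are cleared with the adjugate so that the Krylov determinant becomes polynomial, equivariance follows because everything is built by natural contractions, and $(1c)$ follows because an element of $G_\Gamma^+$ must fix the basis $\mathcal{B}_\Gamma$. (The exact exponent $\kappa(m)$ and its sign depend on conventions and are immaterial; both your value and the paper's serve the purpose, which is that the zero set of $\mathfrak{P}_m$ is $\operatorname{GL}^+(m,\mathbb{R})$-invariant.) Up to this point the proposal is correct.

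The genuine gap is Assertion~(3), which you correctly identify as the real content but then do not prove: you only state a plan to ``write down an explicit parametrized family in $\mathcal{Z}(m)$ \dots and tune the parameters,'' without exhibiting any such family or verifying the cyclicity condition for it. Since $\mathcal{Z}(m)$ is a proper linear subspace of $\mathcal{W}(m)$, the non-vanishing of $\mathfrak{P}_m$ on $\mathcal{W}(m)$ says nothing about its restriction to $\mathcal{Z}(m)$, and an explicit witness is required in \emph{every} dimension $m\ge 2$. This is precisely where the paper does most of its work: it constructs an explicit one-parameter torsion-free block in dimension $2$ with $\rho_{s,\Gamma}=\operatorname{id}$, a separate explicit block in dimension $3$ (where the $3\times3$ Krylov determinant is computed to be $54$), and then handles general $m$ by taking direct sums of such blocks with distinct parameters $0<x_1<\dots<x_{\bar m}$, using an asymptotic argument (the powers $\rho_2(\vec x)^n\omega$ are dominated by the block with the largest parameter, which lets one peel off standard basis vectors one block at a time) to verify that $\xi_0$ is cyclic for the direct sum. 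None of this is ``a finite, if tedious, computation'' at a single point once $m$ is arbitrary: the direct-sum step needs an actual argument, because distinctness of the eigenvalues of $A$ across blocks and non-vanishing of the components of $\xi_0$ in each eigendirection must be established simultaneously for all $m$. As written, your proposal establishes (1) and (2) but leaves (3) unproved.
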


\begin{proof}
Clearly $\rho_{s,\Gamma}$ is non-degenerate if and only if $\det(\rho_{s,\Gamma})\ne0$. So
we will make $\det(\rho_{s,\Gamma})$ a factor of our polynomial to ensure that Assertion~(1a) is valid.
We apply Cramer's rule. Let $\tilde\rho_{s,\Gamma}$ be the matrix of cofactors of $\rho_{s,\Gamma}$; this is a matrix valued polynomial
which is well defined for all $\Gamma\in\mathcal{Z}(m)$ such that if $\det(\rho_{s,\Gamma})\ne0$, then $\rho_{s,\Gamma}^{-1}=\det(\rho_{s,\Gamma})^{-1}\tilde\rho_{s,\Gamma}$.
Suppose $\rho_{s,\Gamma}$ is invertible. We use the Neumann series to expand
\begin{eqnarray*}
&&(\rho_{s,\Gamma}+\varepsilon\rho_{2,s,\Gamma})^{-1}
=\{(\rho_{s,\Gamma}(\operatorname{id}+\varepsilon\rho_{s,\Gamma}^{-1}\rho_{2,s,\Gamma})\}^{-1}
=(\operatorname{id}+\varepsilon\rho_{s,\Gamma}^{-1}\rho_{2,s,\Gamma})^{-1}\rho_{s,\Gamma}^{-1}\\
&=&\sum_{n=0}^\infty(-1)^n(\rho_{s,\Gamma}^{-1}\rho_{2,s,\Gamma})^n\rho_{s,\Gamma}^{-1}\varepsilon^n
=\sum_{n=0}^\infty(-1)^n\det(\rho_{s,\Gamma})^{-n-1}(\tilde\rho_{s,\Gamma}\rho_{2,s,\Gamma})^n\tilde\rho_{s,\Gamma}\varepsilon^n\,.
\end{eqnarray*}
We let $\tilde\xi_n:=(\tilde\rho_{s,\Gamma}\rho_{2,s,\Gamma})^n\tilde\rho_{s,\Gamma}\omega$. We then have
\begin{equation}\label{E2.b}
\xi_{\Gamma,n}=(-1)^n\det(\rho_{s,\Gamma})^{-n-1}\tilde\xi_{\Gamma,n}\,.
\end{equation}
If $\det(\rho_{s,\Gamma})\ne0$, then $\{\xi_{\Gamma,0},\dots,\xi_{\Gamma,n}\}$ is a basis for $\mathbb{R}^m$ if and only if
$\{\tilde\xi_{\Gamma,0},\dots,\tilde\xi_{\Gamma,n}\}$ is a basis for $\mathbb{R}^m$. We define a polynomial $\mathfrak{P}_m$
which satisfies (1a) and (1b) by defining:
$$
\mathfrak{P}_m(\Gamma):=\det(\rho_{s,\Gamma})\det(\tilde\xi_{0,\Gamma},\dots,\tilde\xi_{m,\Gamma})\,.
$$

Since contracting an upper index against a lower index is invariant under the action of $\operatorname{GL}^+(m,\mathbb{R})$,
Theorem~\ref{T1.3} shows the tensors of Equation~(\ref{E2.a}) are invariantly defined; if $T\in\operatorname{GL}^+(m,\mathbb{R})$
and $\Gamma\in\mathcal{W}(m)$, one has that:
\begin{equation}\label{E2.c}\begin{array}{lll}
\omega(T\Gamma)=T\omega(\Gamma),&
\rho_1(T\Gamma)=T\rho_1(\Gamma),&
\rho_2(T\Gamma)=T\rho_2(\Gamma),\\[0.05in]
\rho_{2,s}(T\Gamma)=T\rho_{2,s}(\Gamma),&
T\xi_{\Gamma,i}=\xi_{T\Gamma,i},&
T\mathcal{B}_\Gamma=\mathcal{B}_{T\Gamma}.
\end{array}\end{equation}
 Let $T\in G_\Gamma^+$ with $\mathfrak{P}_m(\Gamma)\ne0$. Then $\mathcal{B}_\Gamma$ is a basis for
$\mathbb{R}^m$. Since $T\mathcal{B}_\Gamma=\mathcal{B}_{T\Gamma}=\mathcal{B}_\Gamma$,
$T=\operatorname{id}$. Assertion~(1c) now follows.

We may verify Assertion~2 as follows. We have
\begin{equation}\label{E2.d}
\det(\rho_{s,T\Gamma})=\det(T\rho_{s,\Gamma})=\det(T)^2\det(\rho_{s,\Gamma})\,.
\end{equation}
In particular, if $\det(\rho_{s,\Gamma})=0$ then $\det(\rho_{s,T\Gamma})=0$ and Assertion~2 holds trivially.
Suppose $\rho_{s,\Gamma}$ is non-singular. Let $c(m):=(1+2+\dots+(m-1))+1$ and let $\kappa(m)=2c(m)+m+2$. Since
$T\xi_{\Gamma,i}=\xi_{T\Gamma,i}$, we may verify Assertion~2 by using Equation~(\ref{E2.b}) and Equation~(\ref{E2.d}) to compute:
\begin{eqnarray*}
\mathfrak{P}_m(T\Gamma)&=&\det(\rho_{s,T\Gamma})\det(\tilde\xi_{T\Gamma,0},\dots,\tilde\xi_{T\Gamma,m-1})\\
&=&\det(\rho_{s,T\Gamma})^{c(m)+1}\det(\xi_{T\Gamma,0},\dots,\xi_{T\Gamma,m-1})\\
&=&\det(T)^{2c(m)+2}\det(\rho_{s,\Gamma})^{c(m)+1}\det(T\xi_{\Gamma,0},\dots,T\xi_{\Gamma,m-1})\\
&=&\det(T)^{2c(m)+m+2}\det(\rho_{s,\Gamma})^{c(m)+1}\det(\xi_{\Gamma,0},\dots,\xi_{\Gamma,m-1})\\
&=&\det(T)^{2c(m)+m+2}\det(\rho_{s,\Gamma})\det(\tilde\xi_{\Gamma,0},\dots,\tilde\xi_{\Gamma,m-1})\\
&=&\det(T)^{2c(m)+m+2}\mathfrak{P}_m(\Gamma)\,.
\end{eqnarray*}

We complete the proof by exhibiting torsion free Christoffel symbols in all dimensions where
$\rho_{s,\Gamma}$ is non-degenerate and where $\{\Xi_{\Gamma,0},\dots,\Xi_{\Gamma,m-1}\}$ are a basis for $\mathbb{R}^m$.
We proceed by considering various cases.
\medbreak\noindent{\bf Case 1.} Let $m=2$.
We use the parametrization of \cite{BGGP16a} and define a torsion free tensor by setting $\Gamma(x)_{ij}{}^k$
$$
\begin{array}{llll}
\Gamma(x)_{11}{}^1=x+\frac1x,&\Gamma(x)_{11}{}^2=0,&\Gamma(x)_{12}{}^1=0&\Gamma(x)_{21}{}^1=0,\\
\Gamma(x)_{22}{}^1=x,&\Gamma(x)_{12}{}^1=x,&\Gamma(x)_{21}{}^2=x,&\Gamma(x)_{22}{}^2=1.
\end{array}$$
We may then compute:
$$
\rho_{2,\Gamma}(x)=\left(\begin{array}{cc}2+\frac1{x^2}+2x^2&x\\x&1+2x^2\end{array}\right),\
\rho_{s,\Gamma}(x)=\left(\begin{array}{cc}1&0\\0&1\end{array}\right),\
\omega=\left(\begin{array}{c}x+\frac1x\\1\end{array}\right)\,.
$$
In particular $\rho_{s,\Gamma}$ is non-singular. As
$(\rho_{s,\Gamma}+\varepsilon\rho_{2,s,\Gamma})^{-1}=\operatorname{id}-\varepsilon\rho_{2,s,\Gamma}+O(\varepsilon^2)$, one has that
$$
\xi_0(x)=\left(\begin{array}{c}x+\frac1x\\1\end{array}\right),\quad
\xi_1(x)=\left(\begin{array}{c}\frac1{x^3}+\frac3x+5x+2x^3\\2+3x^2\end{array}\right)\,.
$$
Choose $x$ so $\xi_0(x)$ and $\xi_1(x)$ are linearly independent to complete the proof if $m=2$.
\medbreak\noindent{\bf Case 2.} Suppose $m=2\bar m$ for $m\ge2$. We let
$\Gamma(\vec x):=\Gamma(x_1)\oplus\dots\oplus\Gamma(x_{\bar m})$.
The structures decouple;
\begin{eqnarray*}
&&\rho_{s,\Gamma}(\vec x)=\rho_{s,\Gamma}(x_1)\oplus\dots\oplus\rho_{s,\Gamma}(x_{\bar m})=\operatorname{id},\quad
\rho_2(\vec x)=\rho_2(x_1)\oplus\dots\oplus\rho_2(x_{\bar m}),\\
&&(\rho_{s,\Gamma}(\vec x)+\varepsilon\rho_2(\vec x))^{-1}=\sum_{n=0}^\infty(-1)^n\varepsilon^n\rho_2(\vec x)^n.
\end{eqnarray*}
Since $\rho_{s,\Gamma}=\operatorname{id}$, $\rho_{s,\Gamma}$ is non-singular. Let
$V:=\operatorname{Span}_{0\le n\le m-1}\{\rho_2(\vec x)^n\omega(\vec x)\}$.
We must show $V=\mathbb{R}^m$. As the minimal polynomial of $\rho_2(\vec x)$ has degree at most $m-1$,
it is not necessary to truncate by taking $n\le m-1$ and we have
$$V=\operatorname{Span}_{0\le n}\{p_2(\vec x)^n\omega(\vec x)\}\,.$$
We assume $0<x_1<\dots<x_{\bar m}$. As $n\rightarrow\infty$, the terms in $x_{\bar m}$
will dominate. We examine the final block
$$
\rho_2(\vec x_{\bar m})^n=(2x_{\bar m}^2)^n\left(\begin{array}{cc}1&0\\0&1\end{array}\right)^n
\left(\begin{array}{c}x_{\bar m}\\0\end{array}\right)+O(x_{\bar m}^{2n})\,.
$$
The other blocks do not play a role so $\lim_{n\rightarrow\infty}\{(2x_{\bar m}^2)^{-n}x_{\bar m}^{-1}\}\omega=e_{2m-1}$.
Since $V$ is a closed $\mathbb{Z}(\rho_2(\vec x))$ module, $e_{2m-1}\in V$. Examining
$\omega-(\frac1{x_{\bar m}}+x_{\bar m})e_{2m-1}$ and applying a similar argument to the last block yields as well $e_{2m}\in V$. We can
now work our way backwards through the blocks to see $V=\mathbb{R}^m$ as desired.
\medbreak\noindent{\bf Case 3.} Suppose $m=3+2k$ for $k\ge0$ is odd. Applying exactly the same asymptotic analysis
as used in the even dimensional case, we are reduced to considering the case $m=3$. We set
$$\begin{array}{llll}
\Gamma_{11}{}^1=2,&\Gamma_{22}{}^2=4,&\Gamma_{33}{}^3=2,\\
\Gamma_{11}{}^3=1,& \Gamma_{13}{}^1=1,&\Gamma_{31}{}^1=1,\\
\Gamma_{23}{}^2=1,&\Gamma_{32}{}^2=1,&\Gamma_{22}{}^3=1\,.
\end{array}$$
We then compute:
\begin{eqnarray*}
\rho_2=\left(\begin{array}{ccc}6&0&2\\0&18&4\\2&4&6\end{array}\right),\quad
\rho_{s,\Gamma}=\left(\begin{array}{ccc}2&0&0\\0&2&0\\0&0&2\end{array}\right),\quad
\omega=\left(\begin{array}{c}2\\4\\4\end{array}\right).
\end{eqnarray*}
Let $A:=\frac12\rho_2$. We then have
$$
(\rho_{s,\Gamma}+\varepsilon\rho_{2,s,\Gamma})^{-1}=\frac12(\operatorname{id}+A)^{-1}=\frac12\sum_{n=0}^\infty(-1)^n\varepsilon^nA^n\,.
$$
We complete the proof by computing:
\medbreak
$\Xi_0=\left(\begin{array}{c}1\\2\\2\end{array}\right),\
\Xi_1=\left(\begin{array}{c}5\\15\\11\end{array}\right),\
\Xi_2=\left(\begin{array}{c}26\\78\\68\end{array}\right)$,
$\det\left(\begin{array}{ccc}1&5&26\\2&15&78\\2&11&68\end{array}\right)=54$.
\end{proof}
\subsection*{The proof of Theorem~\ref{T1.6}}
We use Lemma~\ref{L2.1} to see $\operatorname{GL}^+(m,\mathbb{R})$ preserves $\mathcal{W}(p,q;\mathfrak{P}_m)$
and $\mathcal{Z}(p,q;\mathfrak{P}_m)$ and that $\operatorname{GL}^+(m,\mathbb{R})$ acts without fixed points on
these sets. Theorem~\ref{T1.6} will then follow from Theorem~\ref{T1.5} if we can show the action is proper.

Given bases $\mathcal{B}$ and $\tilde{\mathcal{B}}$ for $\mathbb{R}^m$, let $T_{\mathcal{B},\tilde{\mathcal{B}}}$
be the unique linear transformation taking $\mathcal{B}$ to $\tilde{\mathcal{B}}$. Let $\Gamma\in\mathcal{Z}(p,q;\mathfrak{P}_m)$.
By Equation~(\ref{E2.c}), $T\mathcal{B}_\Gamma=\mathcal{B}_{T\Gamma}$ so $T=T_{\mathcal{B}_\Gamma,\mathcal{B}_{T\Gamma}}$.
Let
$
\{\Gamma_k,\Gamma,\tilde\Gamma\}\subset\mathcal{W}(p,q;\mathfrak{P}_m)$ and
$T_k\in\operatorname{GL}^+(m,\mathbb{R})$. Assume that $\Gamma_k\rightarrow\Gamma$ and $T\Gamma_k\rightarrow\tilde\Gamma$.
This implies $\mathcal{B}_{\Gamma_k}\rightarrow\mathcal{B}_\Gamma$ and
$\mathcal{B}_{T\Gamma_k}\rightarrow\mathcal{B}_{\tilde\Gamma}$. Consequently,
\smallbreak\hfill$T_k=T_{\mathcal{B}_{\Gamma_k},\mathcal{B}_{T\Gamma_k}}\rightarrow T_{\mathcal{B}_\Gamma,\mathcal{B}_{\tilde\Gamma}}$.\hfill\vphantom{.}
\hfill\qed

\section{The proof of Theorem~\ref{T1.7}}\label{S3}
\subsection{The proof of Theorem~\ref{T1.7}~(1)}
Suppose that
$$
\Gamma_n\in\mathcal{W}(p,q)\text{ and }\Gamma_n\rightarrow\Gamma\in\mathcal{W}(p,q)\,.
$$
Assume $\dim\{G_{\Gamma_n}^+\}\ge1$. We must show $\dim\{G_\Gamma^+\}\ge1$. Since $\dim\{G_{\Gamma_n}^+\}\ge1$,
we may find $0\ne\xi_n\in\operatorname{gl}(m,\mathbb{R})$ so that $\exp(t\xi_n)$ defines a 1-parameter subgroup of
$G_{\Gamma_n}^+$. Let $\|\cdot\|$ be a norm on $\operatorname{gl}(m,\mathbb{R})$. We may assume without loss of generality
that $\|\xi_n\|=1$ and extract a convergent subsequence $\xi_n\rightarrow\xi$ with $\|\xi\|=1$. We then have by continuity
that $\exp(t\xi)$ is a 1-parameter subgroup of $G_\Gamma^+$ and thus, in particular, $\dim\{G_\Gamma^+\}\ge1$.

\subsection{The proof of Theorem~\ref{T1.7}~(2)}
Fix $m\ge3$ and let
$$\vartheta=(\vartheta_{ijk})\text{ for }\vartheta\in\{0,1\}\text{ and }1\le i,j,k\le m\,.
$$
Let $A_\vartheta$ be the Abelian group which is generated multiplicatively by indeterminates $\kappa_1,\dots,\kappa_m$ subject to the relations
$\kappa_i\kappa_j=\kappa_k$ whenever $\vartheta_{ijk}=1$. Let $\operatorname{Tor}(A_\vartheta)$ be the subgroup
of $A$ consisting of all elements of finite order. Let
$$
c(m):=\max_{T\in\operatorname{Tor}(A)}\operatorname{order}(T)\,.
$$

Let $\Gamma\in\mathcal{Z}(p,q)$.
Assume that no element of $G_\Gamma^+$ has infinite order. Let $T\in G_\Gamma^+$.
Since $T$ has finite order,  there exists a complex basis $\{f_1,\dots,f_m\}$
for $\mathbb{C}$ so that $Tf_i=\lambda_if_i$ where $|\lambda_i|=1$.
If we express $\lambda_i=e^{\sqrt{-1}\theta_i}$, then the $\theta_i$ are
the rotation angles of $T$ regarded as a real map and the eigenvalues $\lambda_i$ occur in conjugate pairs for $\theta_i\notin\{0,\pi\}$.
We have $T\Gamma_{ij}{}^k=\lambda_i\lambda_j\lambda_k^{-1}\Gamma_{ij}{}^k$.
Set $\vartheta_{\Gamma,ijk}=1$ if $\Gamma_{ij}{}^k\ne0$ and set $\vartheta_{J,ijk}=0$ otherwise.  Then $\vec\lambda$ can be
regarded as an element of $A_\vartheta$. Furthermore, if $A_\vartheta$ has an element of infinite order, then there exists $\vec\lambda$
where the eigenvalues occur suitably in conjugate pairs
so $T_\lambda \Gamma=\Gamma$ and $T_\lambda$ has infinite order. Since this is false, $A_{\vartheta(\Gamma)}$ is finite
and thus $\operatorname{order}(T_\lambda)\le c(m)$ as desired.

To show that $\lim_{m\rightarrow\infty}c(m)=\infty$, we will construct a family of Type~$\mathcal{A}$ connections
$\Gamma_{3\ell}$ on $\mathbb{R}^{3\ell}$ so that there exists an element $T\in G_{\Gamma_{3\ell}}^+$ which has
order $2^\ell-1$ and such that there is no element of infinite order in $G_{\Gamma_{3\ell}}^+$. We shall work in
the torsion free setting so there is no need to symmetrize. Recall that
$$
\rho=\rho_1-\rho_2\text{ where }\rho_{1;jk}:=\Gamma_{in}{}^i\Gamma_{jk}{}^n\text{ and }
\rho_{2;jk}:=\Gamma_{jn}{}^i\Gamma_{ik}{}^n\,.
$$
Central to our construction is a 3-dimensional example. Let $\{e_1,e_2,e_3\}$ be the standard basis for $\mathbb{R}^3$. Introduce
a complex basis
$$\begin{array}{lll}
f_1:=e_1+\sqrt{-1}e_2,&f_2:=e_1-\sqrt{-1}e_2,&f_3:=e_3,\\
f^1:=\frac12(e^1-\sqrt{-1}e^2),&f^2:=\frac12(e^1+\sqrt{-1}e^2),&f^3:=e^3.
\end{array}$$
For $a>0$, let the non-zero Christoffel symbols be given by:
$$\begin{array}{lll}
\Gamma(f_1,f_3,f^1)=a,&\Gamma(f_3,f_1,f^1)=a,&\Gamma(f_2,f_3,f^2)=a,\\
\Gamma(f_3,f_2,f^2)=a,&\Gamma(f_1,f_2,f_3)=a,&\Gamma(f_2,f_1,f^3)=a,\\
\Gamma(f_3,f_3,f^3)=\frac{a^2+1}a.
\end{array}$$
We may then compute:
\medbreak\qquad$\rho_1=\left(\begin{array}{ccc} 0 & 3 a^2+1 & 0 \\ 3 a^2+1 & 0 & 0 \\ 0 & 0 & 3 a^2+\frac{1}{a^2}+4 \\ \end{array}\right)$,
\medbreak\qquad$\rho_2=\left(\begin{array}{ccc} 0 & 2 a^2 & 0 \\ 2 a^2 & 0 & 0 \\ 0 & 0 & 3 a^2+\frac{1}{a^2}+2 \\\end{array}\right)$,\quad
$\rho=\left(\begin{array}{ccc} 0 & a^2+1 & 0 \\ a^2+1 & 0 & 0 \\ 0 & 0 & 2 \\\end{array}\right)$.
\medbreak\noindent Relative to the underlying real basis $\{e_1,e_2,e_3\}$ we have:
\medbreak\qquad
$\textstyle \rho_1=\left(\begin{array}{ccc}3a^2+1&0&0\\0&3a^2+1&0\\0&0&3 a^2+\frac{1}{a^2}+4\end{array}\right)$,
\medbreak\qquad$\rho_2=\left(\begin{array}{ccc}2a^2&0&0\\0&2a^2&0\\0&0&3 a^2+\frac{1}{a^2}+2\end{array}\right)$,
\quad$\rho=\left(\begin{array}{ccc}a^2+1&0&0\\0&a^2+1&0\\0&0&2\end{array}\right)$.
\medbreak We take a basis $\{e_{1,\mu},e_{2,\mu},e_{3,\mu}\}$ for $\mathbb{R}^{3\ell}$ where $1\le\mu\le\ell$. Let the non-zero
Christoffel symbols be given by:
$$\begin{array}{lll}
\Gamma(f_{1,\mu},f_{3,\mu},f^{1,\mu})=a_\mu,&\Gamma(f_{3,\mu},f_{1,\mu},f^{1,\mu})=a_\mu,&
\Gamma(f_{2,\mu},f_{3,\mu},f^{2,\mu})=a_\mu,\\
\Gamma(f_{3,\mu},f_{2,\mu},f^{2,\mu})=a_\mu,&\Gamma(f_{1,\mu},f_{2,\mu},f_{3,\mu})=a_\mu,&
\Gamma(f_{2,\mu},f_{1,\mu},f^{3,\mu})=a_\mu,\\
\Gamma(f_{3,\mu},f_{3,\mu},f^{3,\mu})=\frac{1+a_\mu^2}{a_\mu},&\Gamma(f_{1,\mu},f_{1,\mu},f^{1,\mu+1})=1,&\Gamma(f_{2,\mu},f_{2,\mu},f^{2,\mu+1})=1.
\end{array}$$
Here we let $\mu$ be defined modulo $\ell$ so $f_{i,\ell+1}=f_{i,1}$. This defines corresponding real Christoffel symbols.
Let $\vec\lambda\in\mathbb{C}^\ell$. We assume $|\lambda_\mu|=1$ so $\lambda_\mu^{-1}=\bar\lambda_\mu$. Let
$$\begin{array}{lll}
T(f_{1,\mu})=\lambda_\mu f_{1,\mu},&T(f_{2,\mu})=\bar\lambda_\mu f_{2,\mu},&T(f_{3,\mu})=f_{3,\mu},\\
T(f^{1,\mu})=\bar\lambda_\mu f^{1,\mu},&T(f^{2,\mu})=\lambda_\mu f^{2,\mu},&T(f^{3,\mu})=f^{3,\mu}.
\end{array}$$
To ensure that $T\Gamma=\Gamma$, we must have $\lambda_\mu^2=\lambda_{\mu+1}$. Setting $\lambda_{\ell+1}=\lambda_1$ then
leads to the relation $\lambda_1^{2^\ell}=\lambda_1$ and thus the cyclic group $\mathbb{Z}_{2^\ell-1}$ is a subgroup of $G_\Gamma^+$;
elements of arbitrarily large order can be obtained as $\ell\rightarrow\infty$.
We complete the proof by showing there are no elements of infinite order in $G_\Gamma^+$. If we diagonalize $\rho_2$ relative to $\rho$,
then the resulting eigenspaces must be preserved by any element $T\in G_\Gamma^+$. The decomposition is given by
$$
\textstyle\left\{\left(f_{1,\mu},\frac{3a_\mu^2+1}{2a_\mu^2}\right),\left(f_{2,\mu},\frac{3a_\mu^2+1}{2a_\mu^2}\right),
\left(f_{3,\mu},\frac{6a_\mu^4+a_\mu^2+4a_\mu^2}{2a_\mu^2}\right)\right\}\,.
$$
For suitable choice of the $a_\mu$, the eigenvalues
$$\textstyle\left\{\frac{3a_\mu^2+1}{2a_\mu^2},\frac{6a_\mu^4+a_\mu^2+4a_\mu^2}{2a_\mu^2}\right\}$$
will all be distinct. Thus $T$ preserves the spaces
$\operatorname{Span}\{f_{1,\mu},f_{2,\mu}\}$ and $\operatorname{Span}\{f_{3,\mu}\}$ individually.
Since $\Gamma(f_{3,\mu},f_{3,\mu},f^{3,\mu})\ne0$, we have $Tf_{3,\mu}=1$. Let $T_\mu$ be the restriction
of $T$ to $\operatorname{Span}\{f_{1,\mu},f_{2,\mu}\}$. Since $T_\mu^2\in\operatorname{{{SO(2)}}}$, we have
$T_\mu^2f_{1,\mu}=\lambda_\mu$ and $T_\mu^2f_{2,\mu}=\bar\lambda_\mu$ for some $\lambda_\mu\in S^1$.
It now follows that $T_\mu^2$ has order at most $2^\ell-1$ so there are no elements of infinite order in $G_\Gamma^+$.
\hfill\qed

\section{The action of $\operatorname{GL}(m,\mathbb{R})$ on $\mathcal{Z}(p,q)$ and on $\mathcal{W}(p,q)$}\label{S4}
Let $\rho_0$ be a symmetric bilinear form of signature $(p,q)$. Let
$$
\operatorname{SO}(\rho_0):=\{T\in\operatorname{GL}^+(m,\mathbb{R}):T^*\rho_0=\rho_0\}\,.
$$
The following is a quite general remark.
\begin{lemma}\label{L4.1} Let $\mathcal{O}$ be a $\operatorname{GL}^+(m,\mathbb{R})$
invariant subset of $\mathcal{Z}(p,q)$ or of $\mathcal{W}(p,q)$. If
action of $\operatorname{SO}(\rho_0)$ on $\mathcal{O}$ is proper, then
the action of $\operatorname{GL}^+(m,\mathbb{R})$ on $\mathcal{O}$ is proper.
\end{lemma}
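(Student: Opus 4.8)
The plan is to exploit the transitive action of $\operatorname{GL}^+(m,\mathbb{R})$ on the space $\mathcal{S}(p,q)$ of symmetric bilinear forms of signature $(p,q)$, whose stabilizer at $\rho_0$ is exactly $\operatorname{SO}(\rho_0)$, and to factor an arbitrary group element through $\operatorname{SO}(\rho_0)$ by ``normalizing the Ricci tensor.'' The starting point is the transformation law for the symmetric Ricci tensor. Since $\rho_{s,\Gamma}=\rho_{1,s,\Gamma}-\rho_{2,s,\Gamma}$ and each summand transforms by the $T$-action recorded in Equation~(\ref{E2.c}), one has $\rho_{s,T\Gamma}=T^{t}\rho_{s,\Gamma}T$ in matrix notation ($T^{t}$ the transpose). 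For $\Gamma\in\mathcal{O}$ this is nondegenerate of signature $(p,q)$, so $\Gamma\mapsto\rho_{s,\Gamma}$ maps $\mathcal{O}$ equivariantly into $\mathcal{S}(p,q)$.

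Suppose now $\Gamma_n\to\Gamma$ in $\mathcal{O}$ and $T_n\in\operatorname{GL}^+(m,\mathbb{R})$ with $T_n\Gamma_n\to\tilde\Gamma\in\mathcal{O}$; I must extract a convergent subsequence of $T_n$. Set $\rho_n:=\rho_{s,\Gamma_n}\to\rho_{s,\Gamma}$ and $\tilde\rho_n:=\rho_{s,T_n\Gamma_n}\to\rho_{s,\tilde\Gamma}$, so that $\tilde\rho_n=T_n^{t}\rho_nT_n$. The orbit map $A\mapsto A^{t}\rho_0A$ realizes $\mathcal{S}(p,q)$ as the homogeneous space $\operatorname{GL}^+(m,\mathbb{R})/\operatorname{SO}(\rho_0)$ and is a principal $\operatorname{SO}(\rho_0)$-bundle, hence admits local continuous sections. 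Applying such sections near the limits $\rho_{s,\Gamma}$ and $\rho_{s,\tilde\Gamma}$, I choose $S_n,\tilde S_n\in\operatorname{GL}^+(m,\mathbb{R})$ with $S_n^{t}\rho_0S_n=\rho_n$ and $\tilde S_n^{t}\rho_0\tilde S_n=\tilde\rho_n$, and with $S_n\to S$, $\tilde S_n\to\tilde S$ convergent in $\operatorname{GL}^+(m,\mathbb{R})$.

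The key algebraic observation is that $W_n:=S_nT_n\tilde S_n^{-1}$ lies in $\operatorname{SO}(\rho_0)$: both $S_nT_n$ and $\tilde S_n$ pull $\rho_0$ back to $\tilde\rho_n$, since $(S_nT_n)^{t}\rho_0(S_nT_n)=T_n^{t}\rho_nT_n=\tilde\rho_n=\tilde S_n^{t}\rho_0\tilde S_n$, whence $W_n^{t}\rho_0W_n=(\tilde S_n^{-1})^{t}\tilde\rho_n\tilde S_n^{-1}=\rho_0$ and $\det W_n>0$. Writing $T_n=S_n^{-1}W_n\tilde S_n$ reduces the problem to showing that $W_n$ subconverges. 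To invoke the hypothesis, set $P_n:=(S_nT_n)\Gamma_n\in\mathcal{O}$; then $P_n=S_n(T_n\Gamma_n)\to S\tilde\Gamma$, while $W_n^{-1}P_n=\tilde S_n\Gamma_n\to\tilde S\Gamma$, both limits lying in $\mathcal{O}$. Properness of the $\operatorname{SO}(\rho_0)$-action on $\mathcal{O}$ yields a subsequence $W_{n_k}^{-1}\to W^{-1}\in\operatorname{SO}(\rho_0)$, hence $W_{n_k}\to W$, and finally $T_{n_k}=S_{n_k}^{-1}W_{n_k}\tilde S_{n_k}\to S^{-1}W\tilde S\in\operatorname{GL}^+(m,\mathbb{R})$, which is precisely the properness of the $\operatorname{GL}^+(m,\mathbb{R})$-action.

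The transformation law and the identity $W_n\in\operatorname{SO}(\rho_0)$ are immediate once set up. The one point requiring care—the main obstacle—is the construction of the convergent auxiliary sequences $S_n,\tilde S_n$: I must straighten each form $\rho_n$ to $\rho_0$ by a group element depending continuously on $\rho_n$. This is exactly the existence of local continuous (indeed smooth) sections of the principal bundle $\operatorname{SO}(\rho_0)\to\operatorname{GL}^+(m,\mathbb{R})\to\mathcal{S}(p,q)$, which follows from the submersion theorem applied to the orbit map; since only sections near the two limit forms are needed, no global triviality of the bundle is required. In the definite case $\operatorname{SO}(\rho_0)=\operatorname{SO}(m)$ is compact, so the hypothesis is automatic, which is how the lemma will feed into Theorem~\ref{T1.8}.
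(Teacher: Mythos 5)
Your proof is correct and follows essentially the same route as the paper: both arguments use the equivariance of $\Gamma\mapsto\rho_{s,\Gamma}$ to straighten the Ricci tensors to $\rho_0$ by group elements depending continuously on the data, thereby reducing properness of the $\operatorname{GL}^+(m,\mathbb{R})$-action to the assumed properness of the $\operatorname{SO}(\rho_0)$-action. The only (cosmetic) difference is that the paper carries out the reduction as a chain of ``without loss of generality'' normalizations built from the Gram--Schmidt process, whereas you package the same normalization as an explicit factorization $T_n=S_n^{-1}W_n\tilde S_n$ with $W_n\in\operatorname{SO}(\rho_0)$, obtained from local sections of the orbit map.
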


\begin{proof}  Assume that the action of $\mathcal{SO}(\rho_0)$ on $\mathcal{O}$ is proper.
Suppose given $\Gamma_n\in\mathcal{O}$ and $T_n\in\operatorname{GL}^+(m,\mathbb{R})$ which satisfy
$$
\Gamma_n\rightarrow\Gamma\in\mathcal{O}\text{ and }\tilde\Gamma_n:=T_n\Gamma_n\rightarrow\tilde\Gamma\in\mathcal{O}\,.
$$
We must extract a convergent sequence of the $\{T_n\}$.
Make a change of basis to suppose that $\rho_0=\operatorname{diag}(-1,\dots,-1,+1,\dots,+1)$
relative to the standard basis $\mathcal{B}$ for $\mathbb{R}$. Choose $S\in\operatorname{GL}^+(m,\mathbb{R})$ so that
$S\rho_{\tilde\Gamma}=\rho_0$.  Then $ST_n\Gamma_n\rightarrow S\tilde\Gamma$.
Extracting a convergent subsequence from $ST_n$ is equivalent to extracting a convergent subsequence
from $T_n$. Thus we may assume without loss of generality that $\rho_{\tilde\Gamma}=\rho_0$.
Since $T_n\Gamma_n\rightarrow\tilde\Gamma$,
$\rho_{T_n\Gamma_n}\rightarrow\rho_0$.
We may apply the Gram-Schmidt process to the standard basis $\mathcal{B}$ construct a basis
$\mathcal{B}_n$ for $\mathbb{R}^m$ which is an orthonormal basis for
$\rho_{T_n\Gamma_n}$; since
$\rho_{T_n\Gamma_n}\rightarrow\rho_{\tilde\Gamma}$, the Gram-Schmidt process
does not fail, i.e. we are not trying to normalize a null vector at some stage. Thus the Gram-Schmidt
process yields a sequence $S_n\in\operatorname{GL}^+(m,\mathbb{R})$ so that
$S_n\rightarrow\operatorname{id}$ and so $S_n\rho_{T_n\Gamma_n}=\rho_{S_nT_n\Gamma_n}=\rho_0$. Again,
extracting a convergent subsequence from $S_nT_n$ is equivalent to extracting a convergent
sequence from $T_n$ and hence we may assume without loss of generality that
$\rho_{T_n\Gamma_n}=\rho_0$ for $n$ sufficiently large.
We have
$\Gamma_n=T_n^{-1}\tilde\Gamma_n\rightarrow\Gamma$. Extracting a convergent subsequence from
$\{T_n\}$ is equivalent to extracting a convergent subsequence from $\{T_n^{-1}\}$. Thus
we may interchange the roles of $\{\Gamma_n,\Gamma\}$ and $\{\tilde\Gamma_n,\tilde\Gamma\}$
and apply the argument given above to assume without loss of generality that $\rho_\Gamma=\rho_0$ and
$\rho_{\Gamma_n}=\rho_0$ as well. But since
$\rho_0=\rho_{T_n\Gamma_n}=T_n\rho_{\Gamma_n}=T_n\rho_0$,
$T_n\in\operatorname{SO}(\rho_0)$. By hypothesis, as desired, we can extract a convergent sequence.
\end{proof}

\subsection*{The proof of Theorem~\ref{T1.8}~(1)} Suppose that $(p,q)\in\{(m,0),(0,m)\}$. Then $\rho_0$ is definite
and hence $\operatorname{SO}(\rho_0)$ is compact. Thus any sequence of elements $T_n$ in $\operatorname{SO}(\rho_0)$
has a convergent subsequence so the action of $\operatorname{SO}(\rho_0)$ on $\mathcal{W}(p,q)$ or on $\mathcal{Z}(p,q)$
is proper. Thus by Lemma~\ref{L4.1}, the same is true of the action by $\operatorname{GL}^+(m,\mathbb{R})$.

\subsection*{The proof of Theorem~\ref{T1.8}~(2)} Let
$\Gamma_n\in\mathcal{W}(p,q)$ with $\Gamma_n\rightarrow\Gamma\in\mathcal{W}(p,q)$
and with $G_{\Gamma_n}^+\ne\{\operatorname{id}\}$. We wish to show $G_\Gamma^+\ne\{\operatorname{id}\}$.
This will show that the Christoffel symbols with non-trivial isotropy subgroup form a closed set and correspondingly that
the Christoffel symbols with trivial isotropy subgroup form an open set.

Choose $\operatorname{id}\ne T_n\in G_{\Gamma_n}^+$. Since the action of $\operatorname{GL}^+(m,\mathbb{R})$ is
proper, we can choose a convergent subsequence $T_{n_k}\rightarrow T$. We must guard against the possibility that $T=\operatorname{id}$. Let $\exp$ be the exponential map from the Lie algebra $\mathfrak{so}(\rho_0)$ to
$\operatorname{SO}(\rho_0)$. Put
a Euclidean metric on $\mathfrak{so}(\rho_0)$.
There exists $\varepsilon>0$ so that $\exp$ is a diffeomorphism from the open ball
$B_{3\varepsilon}(0)$ of radius $3\varepsilon$ in $\mathfrak{so}(\rho_0)$ to a neighborhood of
$\operatorname{id}$ in $\operatorname{SO}(\rho_0)$.
Let $\mathcal{O}_k:=\exp(B_{k\varepsilon}(0))$ for $k=1,2$. If $T\in\mathcal{O}_1$ with
$T\ne\operatorname{id}$, then $T=\exp_P(\xi)$ for $0<\|\xi\|<\varepsilon$.
 Choose $k(T)\in\mathbb{N}$
so that $\varepsilon<k(T)\|\xi\|<2\varepsilon$. We then have $T^{k(T)}\in\mathcal{O}_2-\mathcal{O}_1$.
We return to our sequence $\operatorname{id}\ne T_n\in G_{\Gamma_n}\subset\operatorname{SO}(\rho_0)$. By replacing $T_{n_k}$ by
$T_{n_k}^{k(T_n)}$, we can assume additionally that $T_{n_k}\in\mathcal{O}_1^c$ and thus
$T\in\mathcal{O}_{\varepsilon}^c$ so $T\ne\operatorname{id}$.
Since by continuity $T\Gamma=\Gamma$, we conclude as desired that $G_\Gamma^+\ne\operatorname{id}$.\hfill\qed

\subsection*{The proof of Theorem~\ref{T1.8}~(3)} These Assertions follow from Theorem~\ref{T1.5} and
from Assertions~(1,2).
\hfill\qed

\section{The proof Theorem~\ref{T1.9}}\label{S5}

Let $(p,q)$ be given with $m=p+q\ge3$, $p\ge1$, and $q\ge1$. We must show that there exists $\Gamma\in\mathcal{Z}(p,q)$
so that $G_\Gamma^+$ is non-compact. It then follows that the action of $\operatorname{GL}^+(2,\mathbb{R})$ on
$\mathcal{Z}(p,q)$ is not proper.

Suppose first $m=3$. We may work in the torsion free setting.
We consider the structure of Theorem~\ref{T1.12}~(1) and set $\Gamma_{12}{}^3=\Gamma_{21}{}^3=a$,
$\Gamma_{13}{}^1=\Gamma_{31}{}^1=b$, $\Gamma_{23}{}^2=\Gamma_{32}{}^2=c$, $\Gamma_{33}{}^3=d$.  We may then compute that
$$
\rho_\Gamma=\left(\begin{array}{ccc}0&ad&0\\ad&0&0\\0&0&-b^2+bd+c(-c+d)\end{array}\right)\,.
$$
By adjusting the parameters $\{a,b,c,d\}$ suitably, we can obtain either signature $(1,2)$ or signature $(2,1)$.
Let $T_\alpha e_1=\alpha e_1$, $T_\alpha e_2=\alpha ^{-1}e_2$, $T_\alpha e_3=e_3$. This
gives a Lie group isomorphic to $\operatorname{SO}(1,1)$.  We verify that
$T_\alpha\Gamma=\Gamma$ for any $\alpha$.
The sequence $T_n$ obtained by taking $\alpha=n$
then satisfies $\|T_ne_1\|\rightarrow\infty$. Consequently no subsequence of this sequence
converges in $\operatorname{GL}^+(2,\mathbb{R})$.
Therefore, $G_\Gamma^+$ is non compact and the action of $\operatorname{GL}^+(2,\mathbb{R})$ is not proper.

If $m>3$, extend the structure considered above by adding the (possibly) non-zero
Christoffel symbols $\Gamma_{uv}{}^3=\Gamma_{vu}{}^3=\varepsilon_{uv}$ for $4\le u\le v\le m$ where $\varepsilon_{uv}$ are
to be determined. Recall from Equation~(\ref{E1.b}) that
$\rho_{jk}=\Gamma_{in}{}^i\Gamma_{jk}{}^n-\Gamma_{jn}{}^i\Gamma_{ik}{}^n$.
The new Christoffel symbols involving $\varepsilon_{uv}$ make no contribution to $\rho_{jk}$ for indices $1\le j\le k\le 3$ and
only contribute to $\Gamma_{33}{}^3\Gamma_{jk}{}^3$ for $4\le j,k\le m$. Thus
$\rho_{\Gamma,ab}=d\varepsilon_{ab}$ for $4\le a\le b\le m$.
So by adding in these terms, we can obtain any
indefinite signature in dimension at least $3$.\hfill\qed

\section{The two dimensional setting. The proof of Theorem~\ref{T1.11}}\label{S6}
Let $\Gamma_2$ be the structure of Definition~\ref{D1.10}.
Let $\operatorname{SO}(2)=\{T_\theta:0\le\theta<2\pi\}$ and $\operatorname{SO}(1,1)=\{\tilde T_a:a\ne0\}$ where
\begin{equation}\label{E6.a}
T_\theta:=\left(\begin{array}{rr}
\cos(\theta)&\sin(\theta)\\
-\sin(\theta)&\cos(\theta)\end{array}\right)\text{ and }
\tilde T_a:=\left(\begin{array}{ll}
a&0\\0&a^{-1}\end{array}\right)\,.
\end{equation}
Suppose $\Gamma\in\mathcal{Z}(1,1)$ so
the structure group is $\operatorname{SO}(1,1)$.  Then
\begin{equation}\label{E6.b}
\tilde T_a^*\Gamma_{ij}{}^k=a^\varepsilon\text{ for }\varepsilon=\pm1+\pm1+\pm1\in\{\pm1,\pm3\}\,.
\end{equation}
Suppose $\tilde T_a^*\Gamma=\Gamma$. Choose $\Gamma_{ij}{}^k\ne0$. Then
$\tilde T_a^*\Gamma_{ij}{}^k=\Gamma_{ij}{}^k$ implies $a=1$ and $\tilde T_a=\id $. Suppose $\Gamma_n\rightarrow\Gamma$
and $g_n\Gamma_n\rightarrow\tilde\Gamma$. We apply the argument of Lemma~\ref{L4.1} to see that we may suppose
$$
\rho_{s,\Gamma}=\left(\begin{array}{ll}0&1\\1&0\end{array}\right)\text{ and }
g_n=\tilde T_{a_n}\in\operatorname{SO}(1,1)\,.
$$
We use Equation~(\ref{E6.b}) to see that the $\alpha_n$ must converge. Assertion~1 now follows.

Assume $\rho_\Gamma$ is definite so the structure group is $\operatorname{SO}(2)$.
Assume $G_\Gamma$ is non trivial.
Let $\id \ne T_\theta^*\Gamma\in G_\Gamma^+$.
We {{complexify}}. Let $\{e_1,e_2\}$ be the standard basis for $\mathbb{R}^2$. Set
$z:=e_1+\sqrt{-1}e_2$. Then $\{z,\bar z\}$ is a $\mathbb{C}$ basis for
$\mathbb{R}^2\otimes_{\mathbb{R}} {{\mathbb{C}}}$. We extend $\Gamma$ to be complex linear
to define the corresponding complex Christoffel symbols
$\{\Gamma_{zz}{}^z,\ \Gamma_{z\bar z}{}^z,\ \Gamma_{\bar z\bar z}{}^z\}$.
Since the underlying structure is real, the remaining symbols are determined:
$$
\bar\Gamma_{zz}{}^z=\Gamma_{\bar z\bar z}{}^{\bar z},\quad
\bar\Gamma_{z\bar z}{}^z=\Gamma_{z\bar z}{}^{\bar z},\quad
\bar\Gamma_{\bar z\bar z}{}^z=\Gamma_{zz}{}^{\bar z}\,.
$$
Let $ \alpha:=e^{\sqrt{-1}\theta}$.
Since $T_\theta e_1=\cos(\theta) e_1-\sin(\theta) e_2$ and $T_\theta e_2=\sin(\theta) e_1+\cos(\theta) e_1$,
\begin{eqnarray*}
T_\theta z&=&(\cos(\theta)+\sqrt{-1}\sin(\theta))e_1+(-\sin(\theta)+\sqrt{-1}\cos(\theta))e_2\\
&=&(\cos(\theta)+\sqrt{-1}\sin(\theta))(e_1+\sqrt{-1}e_2)=\alpha z\,.
\end{eqnarray*}
Since $T_\theta z=\alpha z$, we have dually that $T_\theta z^*=\bar\alpha z^*$. Consequently when
we raise indices, we have:
$$
(T_\theta ^*\Gamma)_{zz}{}^z=\alpha \alpha \bar \alpha \Gamma_{zz}{}^z,\quad
(T_\theta ^*\Gamma)_{z\bar z}{}^z=\alpha \bar \alpha \bar \alpha \Gamma_{z\bar z}{}^z,\quad
(T_\theta ^*\Gamma)_{\bar z\bar z}{}^z=\bar \alpha \bar \alpha \bar \alpha \Gamma_{\bar z\bar z}{}^z\,.
$$
We have assumed that $T_\theta\ne\id $. If $\Gamma_{zz}{}^z\ne0$, then $\alpha =1$.
Similarly, if $\Gamma_{z\bar z}{}^z\ne0$, then $\bar \alpha =1$. So the only possibility left
is that $\Gamma_{\bar z\bar z}{}^z\ne0$ in which case $\alpha^3=\id $ and
$\theta=\frac{2\pi}3$ or $\theta=\frac{4\pi}3$. This implies $G_\Gamma=\mathbb{Z}_3$.
By making a coordinate rotation and then rescaling, we may assume
$\Gamma_{\bar z\bar z}{}^z=2\sqrt2$. We have:
\begin{eqnarray*}
0&=&\Gamma_{zz}{}^z=\left\{\Gamma_{11}{}^1+2\Gamma_{12}{}^2-\Gamma_{22}{}^1\right\}
+\sqrt{-1}\left\{-\Gamma_{11}{}^2+2\Gamma_{12}{}^1+\Gamma_{22}{}^2\right\},\\
0&=&\Gamma_{z\bar z}{}^z=\left\{\Gamma_{11}{}^1+\Gamma_{22}{}^1\right\}\qquad\hspace{15.5pt}
+\sqrt{-1}\left\{-\Gamma_{11}{}^2-\Gamma_{22}{}^2\right\},\\
2\sqrt2&=&\Gamma_{\bar z\bar z}{}^z=\left\{\Gamma_{11}{}^1-2\Gamma_{12}{}^2-\Gamma_{22}{}^1\right\}
+\sqrt{-1}\left\{-\Gamma_{11}{}^2-2\Gamma_{12}{}^1+\Gamma_{22}{}^2\right\}\,.
\end{eqnarray*}
We solve these equations to obtain the structure $\Gamma_2$.  The remainder of the argument is similar to that given in the
indefinite setting and is therefore omitted.
\qed

\begin{remark}\rm
We normalized the coordinates so that $\Gamma_{\bar z\bar z}{}^z=2\sqrt{2}$. Subsequently, {{in the proof of
Theorem~\ref{T1.12}~(3)}}, we shall deal with the full orbit $\operatorname{GL}^+(2,\mathbb{R})\Gamma_2$ and will not adopt this
normalization.
\end{remark}
\section{The proof of Theorem~\ref{T1.12}}\label{S7}

Let $\mathcal{M}=(\mathbb{R}^3,\Gamma)\in\mathcal{Z}(p,q)$ for $p+q=3$. We suppose $\operatorname{id}\ne T\in G_\Gamma^+$.
In Section~\ref{S7.1}, we will show there is an axis of rotation $\xi$ which is not a null vector so $T\xi=\pm\xi$. It then
follows that $T$ preserves $\xi^\perp$ so the problem becomes, in a certain sense, 2-dimensional.  In Section~\ref{S7.2}, we
show that if $T$ is an element of order at least $4$, then $\dim\{G_\Gamma^+\}\ge1$; this focuses attention on the elements of order $2$
and order $3$ and in Section~\ref{S7.3}, we establish a technical result for elements of order $2$ and $3$.
We use these results in Sections~\ref{S7.4}--\ref{S7.7} to complete the proof of Theorem~\ref{T1.12}.

\subsection{The axis of rotation}\label{S7.1}
We begin our study with the following result:
\begin{lemma}\label{L7.1}
Let $\mathcal{M}=(\mathbb{R}^3,\Gamma)\in\mathcal{Z}(p,q)$ where $p+q=3$.
If $T\in G_\Gamma^+$, then there exists a non-null vector $\xi$ so $T\xi=\xi$.
\end{lemma}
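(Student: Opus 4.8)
The plan is to exploit that any symmetry $T\in G_\Gamma^+$ preserves the non-degenerate symmetric Ricci tensor $\rho:=\rho_{s,\Gamma}$, so that $T$ lies in $\operatorname{SO}(\rho)$, and then to locate the fixed vector inside $\ker(T-\operatorname{id})$ by an eigenvalue analysis governed by $\rho$. First I would record the structural facts. Since $\rho_{s,\Gamma}$ is a natural invariant of $\Gamma$ and $T\Gamma=\Gamma$, one has $T^*\rho=\rho$; combined with $\det(T)>0$ this gives $T\in\operatorname{SO}(\rho)$ and in particular $\det(T)=1$, because $(\det T)^2=1$ for any isometry of $\rho$. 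As $T^{-1}$ is similar to $T^{t}$ through the Gram matrix of $\rho$, the spectrum of $T$ is invariant under $\lambda\mapsto\lambda^{-1}$ and $\operatorname{tr}(T)=\operatorname{tr}(T^{-1})$; hence in dimension $3$ the characteristic polynomial factors as $(\lambda-1)\bigl(\lambda^{2}-(\operatorname{tr}(T)-1)\lambda+1\bigr)$. Thus $+1$ is always an eigenvalue and $V_1:=\ker(T-\operatorname{id})\neq 0$, while the remaining two eigenvalues form either a reciprocal real pair $\{\mu,\mu^{-1}\}$, a conjugate pair $\{e^{\pm i\theta}\}$ on the unit circle, or a repeated value $\pm1$.

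The core step is to show $V_1$ contains a non-null vector. If $\dim V_1\ge 2$ this is automatic: in the definite signatures $(3,0),(0,3)$ every nonzero vector is non-null, while in the signatures $(2,1),(1,2)$ a totally isotropic subspace has dimension at most $1$, so a $2$-dimensional subspace cannot be totally isotropic and hence contains a non-null vector. If $\dim V_1=1$, say $V_1=\mathbb{R}v$, I would use the orthogonality of eigenspaces of an isometry: whenever $\lambda\neq 1$ is an eigenvalue (real or complex), the identity $\rho(Tx,Ty)=\rho(x,y)$ forces the corresponding generalized eigenspace to be $\rho$-orthogonal to $v$. When the eigenvalue $1$ is semisimple this exhibits $\mathbb{R}^3$ as a $\rho$-orthogonal direct sum of $\mathbb{R}v$ with the span of the other eigenspaces; non-degeneracy of $\rho$ then forces $\rho(v,v)\neq 0$, so $v$ is the desired non-null fixed vector. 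Each of the reciprocal-real, the conjugate-pair, and the $\{1,-1,-1\}$ configurations (including the non-diagonalizable subcase of the last) falls under this heading.

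The one genuinely obstructed case, and the part I expect to be hardest, is the \emph{regular unipotent}: $T=\operatorname{id}+N$ with $N$ a single nilpotent Jordan block ($N^2\neq 0$, $N^3=0$), where $\dim V_1=1$ but the fixed line is \emph{null}. Indeed, under $\mathfrak{so}(2,1)\cong\mathfrak{sl}_2$ this fixed line is the centralizer line of a regular nilpotent, on which the Killing form vanishes, so the orthogonality argument cannot conclude and a naive eigenvalue count would leave only a null fixed direction. This is the sole configuration that the purely metric analysis does not dispatch, and handling it is the crux.

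To rule it out I would use the full symmetry condition rather than just $T^*\rho=\rho$. The defining equations $T_i^{\,a}T_j^{\,b}\Gamma_{ab}{}^{c}=T_k^{\,c}\Gamma_{ij}{}^{k}$ are polynomial in the entries of $T$, so $G_\Gamma^+$ is a real algebraic group; a nontrivial unipotent $T$ therefore generates a one-parameter unipotent subgroup $\exp(sN)\subset G_\Gamma^+$, whence $N$ annihilates $\Gamma$, i.e. the linearization $N\cdot\Gamma=0$ holds. I would then argue that a regular nilpotent $N\in\mathfrak{so}(\rho)$ satisfying $N\cdot\Gamma=0$ imposes enough linear relations on the constant Christoffel symbols to force $\det(\rho_{s,\Gamma})=0$, contradicting $\Gamma\in\mathcal{Z}(p,q)$. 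Equivalently, every vector $\xi_{\Gamma,n}$ of Lemma~\ref{L2.1} is $T$-fixed and hence trapped in the null fixed line, and one shows this trapping is incompatible with the non-degeneracy of $\rho$. Once this single configuration is excluded, the case $\dim V_1=1$ always yields a non-null $v$, completing the proof.
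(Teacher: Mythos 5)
Your structural reduction is correct and in places cleaner than the paper's. The factorization of the characteristic polynomial as $(\lambda-1)\bigl(\lambda^2-(\operatorname{tr}(T)-1)\lambda+1\bigr)$, the dichotomy on $\dim\ker(T-\operatorname{id})$, and the $\rho$-orthogonality of the eigenspaces for $\lambda\ne1$ to the fixed line correctly dispatch every case except the regular unipotent with null fixed line. (The paper reaches the same bottleneck by a more pedestrian route: it assumes a null eigenvector, builds a basis $\{e_1,e_2,e_3\}$ with $\rho(e_1,e_3)=1$, reads off that the eigenvalues are $\{1,a,a^{-1}\}$, and reduces to $a=1$ with $Te_2=be_1+e_2$, $b\ne0$ --- exactly your regular unipotent.) Your identification of that configuration as the crux is also exactly right.

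The problem is that at the crux you stop proving and start promising. The sentence ``I would then argue that a regular nilpotent $N\in\mathfrak{so}(\rho)$ satisfying $N\cdot\Gamma=0$ imposes enough linear relations on the constant Christoffel symbols to force $\det(\rho_{s,\Gamma})=0$'' is a statement of intent, and it is precisely the content of the bulk of the paper's proof: three successive rounds of setting the coefficients of $b$ in $(T^*\Gamma)_{ij}{}^k$ to zero, using the torsion-free symmetry $\Gamma_{ij}{}^k=\Gamma_{ji}{}^k$ to kill the constants $c_1,c_2$ one at a time, and finally computing $\rho_{1k}=\Gamma_{in}{}^i\Gamma_{1k}{}^n-\Gamma_{1n}{}^i\Gamma_{ik}{}^n=0$ for all $k$, which contradicts non-degeneracy. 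Nothing about this is automatic: the system $N\cdot\Gamma=0$ is $18$ linear equations in $18$ unknowns whose solution space is $4$-dimensional and nonzero, so the contradiction comes only from the interaction of that solution space with the quadratic map $\Gamma\mapsto\rho_\Gamma$, and it must be computed. Your proposed shortcut via Lemma~\ref{L2.1} does not close the gap either: the vectors $\xi_{\Gamma,n}$ form a basis only when $\mathfrak{P}_m(\Gamma)\ne0$, which is a generic condition not satisfied by every $\Gamma\in\mathcal{Z}(p,q)$, so ``all $\xi_{\Gamma,n}$ lie in a null line'' is not by itself incompatible with non-degeneracy of $\rho$ (they could all vanish, e.g.\ if $\omega=0$). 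Your justification of the linearization itself --- $G_\Gamma^+$ is real algebraic, so a nontrivial unipotent generates a one-parameter unipotent subgroup and hence $N\cdot\Gamma=0$ --- is sound and is a tidier version of the paper's ``replace $T$ by $T^n$ and kill the coefficients of $b$'' device; but as it stands your argument establishes the lemma only modulo the one computation that actually carries the proof.
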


\begin{proof} Let $\rho:=\rho_{s,\Gamma}$ provide a non-degenerate symmetric bilinear inner-product on $\mathbb{R}^3$.
Let $T\in G_\Gamma^+\subset\operatorname{SO}(\rho)$. The characteristic polynomial of $T$ is a
cubic polynomial. Since every cubic polynomial has a real root,
there exists $e_1\ne0$ so $Te_1=ae_1$ for $a\ne0$. Our first task is to show that we can choose an eigenvector which is
not a null vector. Suppose, to the contrary, that $\rho(e_1,e_1)=0$.
Choose $e_3$ so that $\rho(e_1,e_3)=1$. By subtracting
an appropriate multiple of $e_1$, we can assume $\rho(e_3,e_3)=0$. Choose
$e_2\in\operatorname{Span}\{e_1,e_3\}^\perp$. Normalize $e_2$ so
$\rho(e_2,e_2)=\pm1$. Express
$$
Te_1=ae_1,\quad Te_2=t_{21}e_1+t_{22}e_2+t_{23}e_3,\quad
Te_3=t_{31}e_1+t_{32}e_2+t_{33}e_3\,.
$$
As $\rho(Te_1,Te_2)=\rho(e_1,e_2)=0$, $t_{23}=0$.
As $\rho(Te_1,Te_3)=\rho(e_1,e_3)=1$, $t_{33}=a^{-1}$.
Consequently,
$$
Te_1=ae_1,\quad Te_2=t_{21}e_1+t_{22} e_2,\quad Te_3=t_{31}e_1+t_{32}e_2+a^{-1}e_3\,.
$$
We have $\det(T)=t_{22}=1$. Thus $t_{22}=1$. This shows that the eigenvalues of $T$ are $\{1,a,a^{-1}\}$.
Consequently, we could have chosen $e_1$ in the first instance so $Te_1=1$ and we may therefore assume $a=1$
and consequently $a$ is the only eigenvalue of $T$. This implies that
$$Te_1=e_1,\quad Te_2=t_{21}e_1+e_2,\quad Te_3=t_{31}e_1+t_{32}e_1+e_3\,.
$$
If $t_{21}=0$, then we may take $\xi=e_2$ to establish the desired result.
Thus
$t_{21}=b\ne0$. Because
$\rho(Te_2,Te_3)=0$ and $\rho(Te_3,Te_3)=0$,
$$
Te_1=e_1,\quad
Te_2=be_1+e_2,\quad
Te_3=\textstyle\frac12b^2\rho(e_2,e_2)e_1-be_2+e_3\text{ for }b\ne0\,.
$$
Note that $T^*\Gamma$ is a polynomial in $b$. If we replace $T$ by $T^n$,
we replace $b$ by $nb$. Thus if $\Gamma_{ij}{}^k\ne0$,
all the coefficients of $b^k$ must vanish in $(T^*\Gamma)_{ij}{}^k$. We linearize
the problem and work modulo terms which are quadratic and of higher order in $b$ and
concentrate on the relations provided by the linear terms. Let $\{e^1,e^2,e^3\}$ be the
dual basis. Expand:
$$\begin{array}{lll}
Te_1\equiv e_1,\qquad\phantom{......} Te_2\equiv be_1+e_2,&Te_3\equiv-be_2+e_3,\\[0.05in]
Te^1\equiv e^1-be^2,\quad Te^2\equiv e^2+be^3,&Te^3\equiv e^3,\\[0.05in]
T^*\Gamma_{12}{}^3\equiv b\Gamma_{11}{}^3+\Gamma_{12}{}^3,&
T^*\Gamma_{13}{}^3\equiv-b\Gamma_{12}{}^3+\Gamma_{13}{}^3,\\[0.05in]
T^*\Gamma_{23}{}^3\equiv b\Gamma_{13}{}^3-b\Gamma_{22}{}^3+\Gamma_{23}{}^3,&
T^*\Gamma_{33}{}^3\equiv-2b\Gamma_{23}{}^3+\Gamma_{33}{}^3.
\end{array}$$
We set the terms involving $b$ to zero to see:
$$\begin{array}{lllll}
\Gamma_{11}{}^3=0,&\Gamma_{12}{}^3=0,&\Gamma_{13}{}^3=c_1,&\Gamma_{22}{}^3=c_1,&
\Gamma_{23}{}^3=0.
\end{array}$$
We continue the expansion
$$\begin{array}{ll}
T^*\Gamma_{12}{}^2\equiv b\Gamma_{11}{}^2+b\Gamma_{12}{}^3+\Gamma_{12}{}^2,&
T^*\Gamma_{13}{}^2\equiv b\Gamma_{13}{}^3-b\Gamma_{12}{}^2+\Gamma_{13}{}^2,\\ [0.05in]
T^*\Gamma_{22}{}^2\equiv 2b\Gamma_{12}{}^2+b\Gamma_{22}{}^3+\Gamma_{22}{}^2,&
T^*\Gamma_{23}{}^2\equiv b\Gamma_{13}{}^2-b\Gamma_{22}{}^2+b\Gamma_{23}{}^3+\Gamma_{23}{}^2,\\[0.05in]
T^*\Gamma_{33}{}^2\equiv -2b\Gamma_{23}{}^2+b\Gamma_{33}{}^3+\Gamma_{33}{}^2.
\end{array}$$
We set the terms involving $b$ to zero. We use the previous relations to $c_1=0$ and
$$\begin{array}{llllll}
\Gamma_{11}{}^2=0,&\Gamma_{12}{}^2=0,&\Gamma_{22}{}^2=c_2,&\Gamma_{13}{}^2=c_2,&\Gamma_{23}{}^2=c_3,\\[0.05in]
\Gamma_{11}{}^3=0,&\Gamma_{12}{}^3=0,&\Gamma_{13}{}^3=0,&
\Gamma_{22}{}^3=0,&
\Gamma_{23}{}^3=0,\\[0.05in]
\Gamma_{33}{}^3=2c_3.
\end{array}$$
We continue the computation:
$$\begin{array}{ll}
T^*\Gamma_{12}{}^1\equiv b\Gamma_{11}{}^1+\Gamma_{12}{}^1,\\ [0.05in]
T^*\Gamma_{13}{}^1\equiv -b\Gamma_{12}{}^1-b\Gamma_{13}{}^2+\Gamma_{13}{}^1,&
T^*\Gamma_{22}{}^1\equiv 2b\Gamma_{12}{}^1-b\Gamma_{22}{}^2+\Gamma_{22}{}^1,\\ [0.05in]
T^*\Gamma_{23}{}^1\equiv b\Gamma_{13}{}^1-b\Gamma_{22}{}^1-b\Gamma_{23}{}^2+\Gamma_{23}{}^1,&
T^*\Gamma_{33}{}^1\equiv -2b\Gamma_{23}{}^1-b\Gamma_{33}{}^2+\Gamma_{33}{}^1.
\end{array}$$
We set the terms involving $b$ to zero and use the previous relations to see $c_2=0$ and obtain:
$$\begin{array}{llllll}
\Gamma_{11}{}^1=0,&\Gamma_{12}{}^1=0,&\Gamma_{22}{}^1=c_5,&\Gamma_{13}{}^1=c_3+c_5,&
\Gamma_{23}{}^1=c_4,\\[0.05in]
\Gamma_{11}{}^2=0,&\Gamma_{12}{}^2=0,&\Gamma_{22}{}^2=0,&
\Gamma_{13}{}^2=0,&\Gamma_{23}{}^2=c_3,\\[0.05in]
\Gamma_{11}{}^3=0,&\Gamma_{12}{}^3=0,&\Gamma_{22}{}^3=0,&\Gamma_{13}{}^3=0,&
\Gamma_{23}{}^3=0,\\[0.05in]
\Gamma_{33}{}^1=c_6,&\Gamma_{33}{}^2=-2c_4,&\Gamma_{33}{}^3=2c_3.
\end{array}$$
By Equation~(\ref{E1.b}), $\rho_{jk}=\Gamma_{in}{}^i\Gamma_{jk}{}^n-\Gamma_{jn}{}^i\Gamma_{ik}{}^n$.
Consequently
$$\rho_{k1}=\rho_{1k}=\Gamma_{in}{}^i\Gamma_{1k}{}^n-\Gamma_{1n}{}^i\Gamma_{ik}{}^n
=\delta_{k,3}\Gamma_{i1}{}^i\Gamma_{13}{}^1-\Gamma_{13}{}^1\Gamma_{1k}{}^3=0-0=0\,.
$$
This shows that $\rho$ is singular which is false.

Thus we can choose an eigenvector which is not a null vector. Choose $\xi$ so $T\xi=a\xi$ and $\rho(\xi,\xi)\ne0$.
Since $\rho(\xi,\xi)=\rho(T\xi,T\xi)=a^2\rho(\xi,\xi)$, we conclude $a^2=1$. Suppose that $a\ne1$ so $a=-1$.
Let $V=\xi^\perp$. Then $\rho_V:=\rho|_V$ is non-degenerate. Furthermore, $T\xi^\perp=\xi^\perp$. Let $T_V:=T|_V$.
Since $T\xi=-\xi$ and $\det(T)=1$, $\det(T_V)=-1$. The characteristic polynomial $T_V$ takes the form
$\lambda^2+c_1\lambda+\det(T_V)=0$. Since $\det(T_V)=-1$, there are 2 real eigenvalues of $T_V$ and $T_V$ is
diagonal. Thus we can choose a basis $\{\eta,\sigma\}$ for $V$ so $T\eta=\lambda_1\eta$ and $T\sigma=\lambda_2\sigma$
where $\lambda_1\lambda_2=-1$.
Since $\rho(\eta\sigma)=\rho(T\eta,T\sigma)=\lambda_1\lambda_2\rho(\eta,\sigma)=-\rho(\eta,\sigma)$, we have $\rho(\eta,\sigma)=0$.
Since $\rho_V$ is non-degenerate, neither $\eta$ nor $\sigma$ is null. Thus $\lambda_1^2=\lambda_2^2=1$. Since $\lambda_1\lambda_2=-1$, there must exist a $+1$ eigenvector of $T$ which is not null.
\end{proof}

\subsection{Elements of order at least 4}\label{S7.2}
The following subgroups of $\operatorname{GL}^+(3,\mathbb{R})$
will play a central role in what follows. {We generalize Equation~(\ref{E6.a}) to the 3 dimensional setting to define:}
\begin{eqnarray*}
&&\operatorname{SO}(2):=\left\{
T_\theta:=\left(\begin{array}{rrr}
\cos(\theta)&\sin(\theta)&0\\
-\sin(\theta)&\cos(\theta)&0\\
0&0&1
\end{array}\right)\text{ for }0\le\theta\le2\pi\right\}\,,\\
&&\operatorname{SO}(1,1):=\left\{
\tilde T_a:=\left(\begin{array}{lll}
a&0&0\\0&a^{-1}&0\\0&0&1
\end{array}\right)\text{ for  }a\in\mathbb{R}-\{0\}\right\}\,.
\end{eqnarray*}

\begin{lemma}\label{L7.2}
Let $T\in G_\Gamma^+$ for $\Gamma\in\mathcal{Z}(p,q)$ with $p+q=3$. If $T$ has order at least $4$, then
$\dim\{G_\Gamma^+\}=1$ and after making a suitable choice of basis we have either that
$\operatorname{SO}(2)\subset G_\Gamma^+$ or that $\operatorname{SO}(1,1)\subset G_\Gamma^+$.
\end{lemma}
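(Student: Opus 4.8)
The plan is to reduce the problem to a two–dimensional weight computation carried out on the plane orthogonal to the fixed axis produced by Lemma~\ref{L7.1}. First I would invoke Lemma~\ref{L7.1} to obtain a non-null vector $\xi$ with $T\xi=\xi$. Writing $\rho:=\rho_{s,\Gamma}$, the plane $V:=\xi^\perp$ then satisfies that $\rho|_V$ is non-degenerate and $TV=V$; since $\det(T)=1$ and $T\xi=\xi$, we get $\det(T|_V)=1$, so $T|_V\in\operatorname{SO}(\rho|_V)$. Choosing a basis $\{e_1,e_2\}$ of $V$ together with $e_3:=\xi$ puts $T$ in block form $T=(T|_V)\oplus 1$, whence $\operatorname{order}(T)=\operatorname{order}(T|_V)$ and the eigenvalues of $T$ are $1$ together with a pair $\lambda,\lambda^{-1}$ coming from $V$.

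Next I would diagonalise $T|_V$ and record how each Christoffel symbol rescales, exactly as in the surface computation of Theorem~\ref{T1.11} and in the proof of Theorem~\ref{T1.7}~(2). Assign to the three eigendirections the weights $+1,-1,0$ according to the eigenvalues $\lambda,\lambda^{-1},1$. Then $\Gamma_{ij}{}^k$ is an eigenvector for the action of $T$ with eigenvalue $\lambda^{w}$, where $w\in\{-3,-2,-1,0,1,2,3\}$ is the total weight (the two lower indices counted positively and the upper index negatively), so $w=\varepsilon_i+\varepsilon_j-\varepsilon_k$ with each $\varepsilon\in\{+1,-1,0\}$. The condition $T\Gamma=\Gamma$ therefore forces $\Gamma_{ij}{}^k=0$ whenever $\lambda^{w}\ne1$.

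The engine of the argument is that the order hypothesis makes $\lambda^{w}\ne1$ for every $w$ with $0<|w|\le 3$. In the definite case $\rho|_V$ is definite, so after complexifying with $z=e_1+\sqrt{-1}e_2$ we have $\lambda=e^{\sqrt{-1}\theta}$ and $T|_V$ is a rotation; since $\operatorname{order}(T)=\operatorname{order}(\lambda)\ge4$, a relation $\lambda^{w}=1$ with $|w|\le3$ forces $w=0$, and the same conclusion holds trivially when $\lambda$ is not a root of unity. In the indefinite case $\rho|_V$ has signature $(1,1)$; here I would first note that $\operatorname{SO}(\rho|_V)$ contains no parabolic elements, so its det-one members are the real boosts $\tilde T_a$ and their negatives, whose only finite-order elements are $\pm\operatorname{id}$ of order at most $2$. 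Hence $T$ must have infinite order, $\lambda$ is real with $|\lambda|\ne1$ (replacing $T$ by $T^2$ if needed to make the eigenvalues positive), and again $\lambda^{w}=1$ forces $w=0$. In either case only the weight-$0$ symbols survive, and these are exactly the symbols fixed by the full one-parameter group $\{T_\theta\}$ (resp. $\{\tilde T_a\}$); thus $\operatorname{SO}(2)\subset G_\Gamma^+$ or $\operatorname{SO}(1,1)\subset G_\Gamma^+$, giving $\dim\{G_\Gamma^+\}\ge1$.

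Finally I would pin down the equality $\dim\{G_\Gamma^+\}=1$. The containment already yields $\ge1$, and for the reverse inequality I would argue that a strictly larger connected isotropy group would impose the weight-$0$ vanishing relations along more than one Cartan direction, which over $S^2(\mathbb{R}^3)\otimes\mathbb{R}^3$ leaves no $\Gamma$ with $\rho$ non-degenerate; equivalently, the surviving weight-$0$ normal forms are precisely the families of Theorem~\ref{T1.12}~(1)--(2), whose isotropy groups are $\operatorname{SO}(1,1)$ and $\operatorname{SO}(2)$. The hard part will be the careful handling of the indefinite case, where one must exclude non-semisimple and negative-eigenvalue boosts before concluding, together with the representation-theoretic input needed for the upper bound $\dim\le1$; the weight enumeration itself is routine bookkeeping once the eigenbasis of $T$ is fixed.
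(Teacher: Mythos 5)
Your proposal follows essentially the same route as the paper: restrict to the plane $V=\xi^\perp$ orthogonal to the fixed non-null axis supplied by Lemma~\ref{L7.1}, diagonalize $T|_V\in\operatorname{SO}(\rho|_V)$ (over $\mathbb{C}$ when $\rho|_V$ is definite, over $\mathbb{R}$ in the hyperbolic case), note that each $\Gamma_{ij}{}^k$ scales by $\lambda^{w}$ with $|w|\le3$, and use the order hypothesis to kill every symbol of nonzero weight, so that the full one-parameter group $\operatorname{SO}(2)$ or $\operatorname{SO}(1,1)$ preserves $\Gamma$. Your treatment of the indefinite case (finite-order elements of $\operatorname{SO}(1,1)$ have order at most $2$, so $T$ has infinite order) is in fact slightly more careful than the paper's, and your admittedly sketchy final paragraph on the upper bound $\dim\{G_\Gamma^+\}\le1$ is not a defect relative to the paper, whose own proof of this lemma likewise only establishes the containment and defers the identification of $G_\Gamma^+$ to the case analysis of Sections~\ref{S7.4} and \ref{S7.5}.
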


\begin{proof} Choose a unit vector $e_3$ so $Te_3=e_3$. Let $V=e_3^\perp$ and let $\rho_V:=\rho|_V$; $\rho_V$ is non-degenerate.
Furthermore, $T$ preserves $V$. Let $T_V:=T|_V$. Since $\det(T)=1$ and $Te_3=e_3$, $T_V\in\operatorname{SO}(\rho_V)$.
{ We apply the same argument as that used to establish Lemma~\ref{T1.11}.}
\smallbreak\noindent{\bf Case 1.} Suppose $\rho_V$ is indefinite.
Choose a hyperbolic basis $\{e^1,e^2\}$ for $V$ so $\rho_V=e^1\otimes e^2+e^2\otimes e_1$. Since $T_V\in\operatorname{SO}(\rho_V)$,
there exists $a$ so $T=T_a$ takes the form $Te_1=ae_1$ and $Te_2=a^{-1}e_2$. Since $T$ has order at least 4, $a\ne 1$. We compute
$$
T\Gamma_{ij}{}^k=a^{\epsilon(ijk)}\Gamma_{ij}{}^k\text{ for }\epsilon(ijk)=\delta_{1i}-\delta_{2i}+\delta_{1j}-\delta_{1k}+\delta_{2k}\,.
$$
Since $\epsilon(ijk)\in(0,\pm1,\pm2,\pm3)$ and $a\ne\pm1$, we conclude $\Gamma_{ij}{}^k=0$ for $\epsilon(ijk)\ne0$.
But this implies that $T_b\in G_\Gamma^+$ for any $b$ and hence $\operatorname{SO}(1,1)\subset G_\Gamma^+$.
\smallbreak\noindent{\bf Case 2.} Suppose that $\rho_V$ is indefinite. We complexify and set
$$\begin{array}{lll}
f_1:=e_1+\sqrt{-1}e_2,&f_2:=e_1-\sqrt{-1}e_2,&f_3:=e_3,\\
f^1:=\frac12(e^1-\sqrt{-1}e^2),&f^2:=\frac12(e^1+\sqrt{-1}e^2),&f^3:=e^3.
\end{array}$$
Since $T\in\operatorname{SO}(2)$, the complex eigenvalues of $T$ are $\{\alpha,\bar\alpha=\alpha^{-1}\}$.
Since we can diagonalize $T$ over $\mathbb{C}$, we may choose the notation so $Tf_1=\alpha f_1$ and $Tf_2=\alpha^{-1}f_2$.
The analysis of Case 1 pertains and thus $T\Gamma_{ij}{}^k=\alpha^{\epsilon(ijk)}\Gamma_{ij}{}^k$. By assumption, $\alpha\ne1$,
$\alpha^2\ne1$, and $\alpha^3\ne1$. Thus once again $\Gamma_{ij}{}^k=0$ if $\epsilon(ijk)\ne0$ and
we may conclude $\operatorname{SO}(2)\subset G_\Gamma^+$.
\end{proof}
\subsection{Elements of order 2 and of order 3}\label{S7.3}
Lemma~\ref{L7.2} focuses attention on the elements of order $2$ and of order $3$. The following is a useful
technical result.

\begin{lemma}\label{L7.3}
$T_i\in G_\Gamma^+$ for $i=1,2$. If
$T_1$, $T_2$, and $T_1T_2$ have order $3$, then either $T_1T_2^2=\id$ or $T_1T_2^2$ has
order $2$.
\end{lemma}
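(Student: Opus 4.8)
The plan is to convert the whole question into trace computations in the two-fold cover $\operatorname{SL}(2,\mathbb{C})$ of $\operatorname{SO}(3,\mathbb{C})$, into which $G_\Gamma^+\subset\operatorname{SO}(\rho)$ embeds once we complexify the non-degenerate form $\rho:=\rho_{s,\Gamma}$ (this makes the argument uniform across signatures, which is the whole point since order-$3$ elements can a priori live in any $(p,q)$ with $p+q=3$). First I would record the spectral facts, valid in dimension $3$ for every signature: a non-identity $S\in\operatorname{SO}(\rho)$ with $S^3=\id$ has eigenvalues $\{1,\omega,\bar\omega\}$ with $\omega=e^{2\pi\sqrt{-1}/3}$, hence $\operatorname{tr}(S)=0$, while a non-identity $S$ with $S^2=\id$ has eigenvalues $\{1,-1,-1\}$, hence $\operatorname{tr}(S)=-1$; by Lemma~\ref{L7.1} the fixed axis of such an $S$ is non-null. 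Since $T_2$ has order $3$ we have $T_2^2=T_2^{-1}$, so the element to analyze is $T_1T_2^{-1}$, and the assertion is precisely that $\operatorname{tr}(T_1T_2^{-1})\in\{3,-1\}$, i.e.\ that $T_1T_2^{-1}$ is either $\id$ or of order $2$.

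Next I would lift. Choosing lifts $X_1,X_2\in\operatorname{SL}(2,\mathbb{C})$ of $T_1,T_2$ and using the identity $\operatorname{tr}_{\operatorname{SO}(3)}(R)=\operatorname{tr}_{\operatorname{SL}(2)}(X)^2-1$ relating the $3$-dimensional trace of a rotation $R$ to the trace of either of its lifts $X$, the order-$3$ hypothesis on $T_1,T_2$ gives $\operatorname{tr}(X_i)^2=1$; I normalize the sign of each lift so that $\operatorname{tr}(X_1)=\operatorname{tr}(X_2)=1$. Likewise $T_1T_2$ of order $3$ forces $z:=\operatorname{tr}(X_1X_2)\in\{1,-1\}$. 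The fundamental trace identity $\operatorname{tr}(X_1X_2)+\operatorname{tr}(X_1X_2^{-1})=\operatorname{tr}(X_1)\operatorname{tr}(X_2)$ then yields $\operatorname{tr}(X_1X_2^{-1})=1-z$, so that $\operatorname{tr}(T_1T_2^{-1})=(1-z)^2-1$. For $z=1$ this equals $-1$, so $X_1X_2^{-1}$ has trace $0$, its square is $-\id$, and $T_1T_2^{-1}$ genuinely has order $2$; for $z=-1$ this equals $3$, pointing to $T_1T_2^{-1}=\id$. As $T_1T_2^{-1}\in G_\Gamma^+$, this would finish the proof.

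The hard part is the branch $z=-1$: trace $3$ only forces the eigenvalues to be $\{1,1,1\}$, and in indefinite signature this value is also attained by a non-trivial unipotent (parabolic) element of infinite order, which must be excluded. I would dispatch this by noting that $z=-1$ is equivalent to $\operatorname{tr}(X_1X_2^{-1})=2$, equivalently to the commutator-trace condition $\operatorname{tr}([X_1,X_2])=2$, which is the classical criterion for the pair $X_1,X_2$ to be reducible. Since each $T_i$ is semisimple with non-null axis (its complex invariant $2$-plane must be definite, as $\operatorname{SO}$ of an indefinite or degenerate plane cannot carry the modulus-one complex eigenvalues $\omega,\bar\omega$), diagonalizability of each $X_i$ promotes a common eigenline to simultaneous diagonalizability; hence $T_1,T_2$ commute and share a common non-null axis, acting as rotations by $\pm2\pi/3$ in the same definite plane. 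Opposite angles would make $T_1T_2=\id$, contradicting order $3$, so $T_1=T_2$ and $T_1T_2^{-1}=\id$. Geometrically this is the statement that two order-$3$ rotations of $\mathbb{H}^2=\operatorname{SO}(2,1)/\operatorname{SO}(2)$ whose product is again an order-$3$ rotation would bound a hyperbolic triangle with all three angles equal to $\pi/3$, which is impossible unless it degenerates to a common fixed point. In the definite case $\operatorname{SO}(3)$ there are no unipotents, trace $3$ gives $\id$ at once, and both alternatives of the lemma actually occur: the quaternion computation shows the order-$3$ condition on $T_1T_2$ pins the axes $\mathbf u,\mathbf v$ to satisfy $\mathbf u\cdot\mathbf v\in\{-\tfrac13,1\}$, giving respectively an element of order $2$ or $T_1=T_2$.
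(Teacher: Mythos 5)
Your route through the $\operatorname{SL}(2,\mathbb{C})$ double cover is genuinely different from the paper's, which picks a common unit vector $e_1$ in the two rotation planes, writes $T_2$'s plane as $f_2=xe_2+ye_3$ with $x^2\pm y^2=1$, and computes $\operatorname{Tr}(T_1T_2)=\frac34(-1-2x+3x^2)$ and $\operatorname{Tr}(T_1T_2^2)=\frac34(-1+2x+3x^2)$ directly; setting the first to zero gives $x\in\{1,-\frac13\}$ and hence the second equals $3$ or $-1$. Your trace calculus reproduces exactly this dichotomy ($z=\operatorname{tr}(X_1X_2)\in\{1,-1\}$ corresponds to $x\in\{-\frac13,1\}$), and the branch $z=1$ is handled correctly and cleanly: $\operatorname{tr}(X_1X_2^{-1})=0$ forces $(X_1X_2^{-1})^2=-\id$ by Cayley--Hamilton, so $T_1T_2^{-1}$ has order exactly $2$. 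The advantage of your method is that it avoids the paper's two-case bookkeeping over the signature of $\rho$; the paper's advantage is that its explicit parametrization makes the degenerate branch transparent, since $x=1$ forces $y=0$ and hence $T_1=T_2$ outright.

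The gap is in the branch $z=-1$, at the sentence ``diagonalizability of each $X_i$ promotes a common eigenline to simultaneous diagonalizability.'' That implication is false over $\mathbb{C}$: take $X_1=\left(\begin{smallmatrix}a&1\\0&a^{-1}\end{smallmatrix}\right)$ and $X_2=\left(\begin{smallmatrix}a&0\\0&a^{-1}\end{smallmatrix}\right)$ with $a=e^{\pi\sqrt{-1}/3}$. Both are diagonalizable with trace $1$, they share the eigenline $\mathbb{C}e_1$, $\operatorname{tr}(X_1X_2)=-1$, yet $X_1X_2^{-1}$ is a nontrivial unipotent. This is not a pathology of the algebra: it is the Euclidean $(3,3,3)$ triangle group sitting inside $\operatorname{SO}(3,\mathbb{C})$, where two order-$3$ rotations and their order-$3$ product have $T_1T_2^{-1}$ a translation of infinite order. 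So your trace data genuinely does not determine the conclusion, and since you deliberately work in $\operatorname{SL}(2,\mathbb{C})$ ``to make the argument uniform across signatures,'' this configuration must be excluded by an argument you have not supplied. The fix is to use the real structure you set aside: each $T_i$ is elliptic with definite rotation plane, so its real lift $X_i$ has no real eigenvector, its two eigenlines are swapped by complex conjugation (resp.\ by the quaternionic structure in the $\operatorname{SU}(2)$ case), and therefore a common eigenvector $v$ forces $\bar v$ to be a second common eigenvector; only then do $X_1$ and $X_2$ commute and $X_1X_2^{-1}=\id$ follow. Equivalently, the hyperbolic angle-sum remark you relegate to a closing aside is not an illustration of the argument --- it \emph{is} the missing argument, and it needs to be carried out (or replaced by the conjugation argument above) for the lemma to be proved.
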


\begin{proof}
Suppose $T\in\operatorname{GL}(3,\mathbb{R})$ can be written in the form
\begin{equation}\label{E7.a}
T=\left(\begin{array}{rrr}\cos(\theta)&\sin(\theta)&0\\-\sin(\theta)&\cos(\theta)&0\\0&0&1\end{array}\right)\,.
\end{equation}
Then $\operatorname{Tr}(T)=2\cos(\theta)+1$ so $\operatorname{Tr}(T)$ determines $T$ up to conjugacy
in this setting:
\begin{enumerate}
\item $T$ has order $1$ $\Leftrightarrow$ $\cos(\theta)=+1$ $\Leftrightarrow$
$\operatorname{Tr}(T)=+3$.
\item $T$ has order $2$ $\Leftrightarrow$ $\cos(\theta)=-1$ $\Leftrightarrow$
$\operatorname{Tr}(T)=-1$.
\item $T$ has order $3$ $\Leftrightarrow$ $\cos(\theta)=-\frac12$ $\Leftrightarrow$
$\operatorname{Tr}(T)=\phantom{-}0$.
\end{enumerate}
Since $T_1\in G_\Gamma^+$ has order $3$, we may choose a non-null vector so $Te_3=\pm e_3$.
Since $T^3=\id$, $Te_3=e_3$. Consequently, $T_1$ has the form given in Equation~(\ref{E7.a}). Let
$P_1:=e_3^\perp$ be the rotation plane of $T_1$ and, similarly, let $P_2$ be the rotation plane
of $T_2$. Since $\dim\{P_1\cap P_2\}\ge\dim\{P_1\}+\dim\{P_2\}-3=1$, we can choose a unit
vector $e_1\in P_1\cap P_2$. Let $\{e_2,f_2\}$ be unit vectors so
$T_1e_1=-\frac12e_1+\frac{\sqrt3}2e_2$ and $T_2{{e_1}}=-\frac12e_1+\frac{\sqrt3}2f_2$.
\smallbreak\noindent{\bf Case 1.}
Assume $\rho$ is definite. Decompose $f_2=xe_2+ye_3$ where
$x^2+y^2=1$. Let $f_3:=-ye_2+xe_3$ be a unit vector which spans the
rotation axis of $T_2$. We then have: \smallbreak\hglue .5cm
$\begin{array}{lll}
f_1=e_1,&f_2=xe_2+ye_3,&f_3=-ye_2+xe_3,\\[0.05in]
e_1=f_1,&e_2=xf_2-yf_3,&e_3=yf_2+xf_3,\\[0.05in]
T_1e_1=-\frac12e_1+\frac{\sqrt3}2e_2,&
T_1e_2=-\frac{\sqrt3}2e_1-\frac12e_2,&T_1e_3=e_3,\\[0.05in]
T_2e_1=-\frac12e_1+\frac{\sqrt3}2f_2,&
T_2f_2=-\frac{\sqrt3}2e_1-\frac12f_2,&T_2f_3=f_3,\\[0.05in]\end{array}$
\smallbreak\qquad
$T_1T_2e_1=T_1\{-\frac12e_1+\frac{\sqrt3}2(xe_2+ye_3)\}
=\frac14(1-3x)e_1+\star e_2+\star e_3$, \smallbreak\qquad
$T_1T_2e_2=T_1T_2(xf_2-yf_3)=T_1(-\frac{x\sqrt3}2e_1-\frac
x2f_2-yf_3)$ \smallbreak\qquad\quad
$=T_1(-\frac{x\sqrt3}2e_1{{+}} (-\frac{x^2}2+y^2)e_2+\star e_3)$
\smallbreak\qquad\quad $=\star e_1+\frac14(-3x+x^2-2y^2)e_2+\star
e_3$, \smallbreak\qquad $T_1T_2e_3=T_1T_2(yf_2+xf_3)=T_1(\star
e_1-\frac12yf_2+xf_3)$ \smallbreak\qquad\quad $=T_1(\star e_1+\star
e_2+(-\frac12y^2+x^2)e_3)$ \smallbreak\qquad\quad $=\star e_1+\star
e_2+\frac14(4x^2-2y^2)e_3$, \smallbreak\qquad
$\operatorname{Tr}(T_1T_2)=\frac14(1-3x-3x+x^2+4
x^2-4y^2)=\frac34({{-1-2x+3x^2}})$. \medbreak\noindent Since
$T_1T_2$ has order $3$, $\operatorname{Tr}(T_1T_2)=0$. The equation
${{-1-2x+3x^2 = 0}}$ then implies $x=1$ or $x=-\frac13$. Since $T_2$
has order $3$, $T_2^2$ also has order $3$. Introduce similar
notation $\{\tilde x,\tilde y,\tilde f_2,\tilde f_3\}$ for $T_2^2$
to expand $\operatorname{Tr}(T_1T_2^2)=\frac34({{-1-2\tilde
x+3\tilde x^2}})$. Because $T_2^2e_1=-\frac12e_1-\frac{\sqrt3}2f_2$,
we see that $\tilde f_2=-f_2$ and thus $\tilde x=-x$.  We complete
proof if $\rho$ is definite by computing:
$$
\operatorname{Tr}(T_1T_2^2)=\textstyle\frac34({{-1+2x+3x^2}})=\left\{\begin{array}{rrr}
3&\text{if}&x=1\\
-1&\text{if}&x=-\frac13\end{array}\right\}\,.
$$
\smallbreak\noindent{\bf Case 2. } Assume $\rho$ is indefinite.
In the hyperbolic setting, we have $x^2-y^2=1$, but the same argument pertains. We compute:
\medbreak\hglue .5cm
$\begin{array}{lll}
T_1e_1=-\frac12e_1+\frac{\sqrt3}2e_2,&
T_1e_2=-\frac{\sqrt3}2e_1-\frac12e_2,&T_1e_3=e_3,\\[0.05in]
T_2e_1=-\frac12e_1+\frac{\sqrt3}2f_2,&
T_2f_2=-\frac{\sqrt3}2e_1-\frac12f_2,&T_2f_3=f_3,\\[0.05in]
f_1=e_1,&f_2=xe_2+ye_3,&f_3=ye_2+xe_3,\\[0.05in]
e_1=f_1,&e_2=xf_2-yf_3,&e_3=-yf_2+xf_3.
\end{array}$
\smallbreak\qquad
$T_1T_2e_1=T_1\{(-\frac12e_1+\frac{\sqrt3}2(xe_2+ye_3)\}
=\frac14(1-3x)e_1+\star e_2+\star e_3$, \smallbreak\qquad
$T_1T_2e_2=T_1T_2(xf_2-yf_3)=T_1(-\frac{x\sqrt3}2f_1-\frac
x2f_2-yf_3)$ \smallbreak\qquad\quad
$=T_1(-\frac{x\sqrt3}2e_1{{+}}(-\frac{x^2}2-y^2)e_2+\star e_3)$
\smallbreak\qquad\quad $=\star
e_1+(-\frac{3x}4+\frac{x^2}4+\frac12y^2)e_2+\star e_3$,
\smallbreak\qquad $T_1T_2e_3=T_1T_2(-yf_2+xf_3)=T_1(\star
f_1+\frac12yf_2+xf_3)$ \smallbreak\qquad\quad $=T_1(\star e_1+\star
e_2+(\frac12y^2+x^2)e_3)$ \smallbreak\qquad\quad $=\star e_1+\star
e_2+(x^2+\frac12y^2)e_3$, \smallbreak\qquad
$\operatorname{Tr}(T_1T_2)=\frac14(1-3x-3x+x^2+2y^2+4x+2y^2)=\frac34(-1-2x+3x^2)$,
\medbreak\noindent The remainder of the argument is the same as in
the definite setting. Assertion~2 now follows.
\end{proof}

Let $\nu(T)$ be the order of $T\in\operatorname{GL}(2,\mathbb{R})$. In what follows we list the (possibly)
non-zero Christoffel symbols up to the symmetry $\Gamma_{ij}{}^k=\Gamma_{ji}{}^k$. Let
\begin{equation}\label{E7.b}
S_je_i=\left\{\begin{array}{rc}e_i&\text{ if }i=j\\-e_i&\text{ if }i\ne j\end{array}\right\}\,.
\end{equation}
 Let $s_3$ be the symmetric
group of all permutations on 3 elements; $s_3$ is a non-abelian group of order 6. Let $a_4$ be
the alternating group of permutations of 4 elements; $a_4$ is a non-Abelian group of order 12.
We assume there exists $T\in G_\Gamma^+$ of order at least 3 as otherwise
Assertion~4 holds of Theorem~\ref{T1.12} holds. By Assertion~1 of Lemma~\ref{L7.1}, we may choose a non-null vector $e_3$ so that
$Te_3=\pm e_3$ and $\rho(e_3,e_3)=\pm1$.
Let $V:=e_3^\perp$, let $\rho_V:=\rho|_V$, and let $T_V:=T|_V$. Then $\rho_V$ is non-degenerate
and $T_V\in\operatorname{SO}(\rho|_V)$. We divide the proof of Theorem~\ref{T1.12} into 4 cases.
\subsection{The proof of Theorem~\ref{T1.12}~(1)}\label{S7.4}
Assume $\rho_V$ is indefinite. Choose a hyperbolic basis for $V$
so
$$
\rho_V(e_1,e_1)=\rho_V(e_2,e_2)=0\text{ and }\rho_V(e_1,e_2)=1\,.
$$
Suppose first $Te_3=e_3$.
Since $\det(T_V)=+1$,
$T_Ve_1=\alpha e_1$ and $T_Ve_2=\alpha^{-1}e_2$.
Since $\nu(T)\ge3$, $\alpha\ne\pm1$.
Since $T^n$ preserves $\Gamma$, the only possible non-zero Christoffel symbols are
$\{\Gamma_{13}{}^1,\Gamma_{23}{}^2,\Gamma_{12}{}^3,\Gamma_{33}{}^3\}$.
Consequently, $\Gamma$ has the form given in Assertion~1 and
the Ricci tensor is as given;
to ensure $\rho$ is non-degenerate, $(a,b,c,d)$ satisfy the given constraints.
We adopt the notation of Equation~(\ref{E7.b}) to define $S_2$.
Then $S_2^*\Gamma=\Gamma$ implies $d=0$ which is false. Thus, in particular,
$S_2\notin G_\Gamma^+$ so $G_\Gamma^+\ne\operatorname{SO}(\rho)$.

Suppose $S\in G_\Gamma^+-\operatorname{SO}(1,1)$.
Since any two distinct connected $1$-dimensional subgroups
generate $\operatorname{SO}(\rho)$ and since
$G_\Gamma^+\ne\operatorname{SO}(\rho)$, $S$ must normalize
$\operatorname{SO}(1,1)$ and in particular preserves $V$ and $e_3$.
Since $S\notin\operatorname{SO}(1,1)$,
$Se_3=-e_3$. But $S^*\Gamma_{33}{}^3=-\Gamma_{33}{}^3$ and hence $d=0$.
This is not possible. Thus
$G_\Gamma^+=\operatorname{SO}(1,1)$.

Next suppose $Te_3=-e_3$. Since $T_V\in\operatorname{O}(\rho_V)$
and $\det(T_V)=-1$,
$Te_1=\alpha e_2$ and $Te_2=\alpha^{-1}e_1$ for some $\alpha\ne0$.
We then have $T^2=\id$ which is false
as we assumed that $\nu(T)\ge3$. This completes the analysis of the case when $\rho_V$ is
indefinite.

\subsection{The proof of Theorem~\ref{T1.12}~(2)}\label{S7.5}
 Assume that $\rho_V$ is definite and that $T$ has order
at least 4.
If $Te_3=-e_3$, then $T_V\in\operatorname{O}(2)-\operatorname{SO}(2)$ and $T^2=\id$
which is false. Thus $Te_3=e_3$ and $T_V\in\operatorname{SO}(2)$.
Let $\{e_1,e_2\}$ be an orthonormal basis for $V$. Since
$T_V\in\operatorname{SO}(V)$, $T_V$ is a rotation through an angle $\theta$ on $e_3^\perp$.
We use the argument used to prove {{Theorem ~\ref{T1.11}}}. Set $f_1=e_1+\sqrt{-1}e_2$, $f_2=e_1-\sqrt{-1}e_2$, and $f_3=e_3$.
We then have $Tf_1=e^{\sqrt{-1}\theta}f_1$, $Tf_2=e^{-\sqrt{-1}\theta}f_2$, and $Tf_3=f_3$.
Let $\tilde\Gamma_{ij}{}^k$ be the complex Christoffel symbols relative to the basis $\{f_1,f_2,f_3\}$.
We have:
$$
T^*\tilde\Gamma_{ij}{}^k=e^{\sigma_{ij}{}^k\sqrt{-1}\theta}\tilde\Gamma_{ij}{}^k\text{ for }
\sigma_{ij}{}^k:=\{\delta_{1i}-\delta_{2i}\}+\{\delta_{1j}-\delta_{2j}\}+\{\delta_{2k}-\delta_{1k}\}\,.
$$
Consequently, $\sigma_{ij}{}^k\in\{-3,-2,-1,0,1,2,3\}$. Since $T_V$ does not
have order $1$, $2$ or $3$, $e^{\sigma_{ij}{}^k\sqrt{-1}\theta}\ne1$ for $\sigma_{ij}{}^k\ne0$ and thus
$\tilde\Gamma_{ij}{}^k=0$ for $\sigma_{ij}{}^k\ne0$.
The possible elements with $\sigma_{ij}{}^k=0$ are
$\{\tilde\Gamma_{12}{}^3,\tilde\Gamma_{13}{}^1,\tilde\Gamma_{23}{}^2,\tilde\Gamma_{33}{}^3\}$. After disentangling
the notation, we conclude that
$\Gamma=\Gamma_{\operatorname{SO}(2)}(a,b,c,d)$
has the form where the (potentially) non-zero entries are given in Assertion~2 of Theorem~\ref{T1.12}
and, consequently,
$\operatorname{SO}(2)\subset\Gamma_{\operatorname{SO}(2)}(a,b,c,d)$. One
then computes the Ricci tensor and obtains the required conditions on $(a,b,c,d)$.

We complete the proof of Assertion~2 by showing $\operatorname{SO}(2)=G_\Gamma^+$.
Adopt the notation of Equation~(\ref{E7.b}) to define $S_2$.
If $S_2^*\Gamma=\Gamma$, then
$a=0$ which is false. Thus
$G_\Gamma^+\ne\operatorname{SO}(\rho)$.
Suppose there exists $S\in G_\Gamma^+-\operatorname{SO}(2)$. As any 2 distinct connected
1-dimensional Lie subgroups of $\operatorname{SO}(\rho)$ generate $\operatorname{SO}(\rho)$
and as $G_\Gamma^+\ne\operatorname{SO}(\rho)$, $S$ must normalize $\operatorname{SO}(2)$ so,
in particular, $S$ preserves $V$ and, consequently, $Se_3=\pm e_3$. If $Se_3=e_3$, then
$S\in\operatorname{SO}(2)$ which is false. Thus $Se_3=-e_3$. Since $\det(S_V)=-1$, $S$
fixes a vector of $V$. Choose the basis so $Se_2=e_2$. It then
follows $Se_1=-e_1$ so $S=S_2\in G_\Gamma^+$ which contradicts the argument we have just
given. Thus $G_\Gamma^+=\operatorname{SO}(2)$ and Assertion 2 holds.

\subsection{The proof of Theorem~\ref{T1.12}~(3)}\label{S7.6}
If there exists an element $T\in G_\Gamma^+$ of order
at least $4$, then the analysis given above to examine Assertion~1 or Assertion~2 pertains and
$G_\Gamma^+=\operatorname{SO}(1,1)$ or $G_\Gamma^+=\operatorname{SO}(2)$. Thus
we conclude that the order of any element $T\in G_\Gamma^+$ is at most 3. Furthermore,
if $T$ has order $3$ and if $Te_3=\pm e_3$, then $\rho_V$ is definite. Finally, since Assertion~4
does not hold, there exists an element of order $3$. Fix such an element. In the proof of Assertion~3,
we ignored the case where
$\sigma_{ij}{}^k=\pm3$. When we include these cases, we must allow $\tilde\Gamma_{11}{}^2$
and the complex conjugate $\tilde\Gamma_{22}{}^1$. This
shows $\Gamma=e\Gamma_2+\Gamma_{\operatorname{SO}}(a,b,c,d)$ is as described in
Assertion~2. If $e=0$, then the analysis of Assertion~2
pertains and $\Gamma_G^+=\operatorname{SO}(2)$.
Thus we may assume $e\ne0$.

In Assertion~3, $G_\Gamma^+$ can be bigger than $\mathbb{Z}_3$ in certain instances
and we must examine these exceptional structures. We
suppose there exists $S\in G_\Gamma^+-\{1,T,T^2\}$.
We wish to show $S$ can be chosen so $S$ has order $2$. Suppose to the contrary that $S$
has order $3$. If $TS$ has order $2$, we have an element of order $2$. So we assume $TS$ has
order $3$. But the Assertion~2 of Lemma~\ref{L7.3} shows that $TS^2=\id$ or $TS^2$ has order $2$.
Since $S\notin\{1,T,T^2\}$, $TS^2\ne \id$.

 We use the argument used to prove the second assertion of {{Theorem~\ref{T1.11}}}. We had the exceptional structure $\Gamma_2$.
We made a coordinate rotation to ensure $\Gamma_{zz}{}^{\bar z}=1$.
If instead, we assume that $\Gamma_{zz}{}^{\bar z}=4e+4\sqrt{-1}f$, then we obtain
a slightly more general form
$$\begin{array}{llllll}
\Gamma_{11}{}^1=e,&\Gamma_{11}{}^2=-f,&\Gamma_{11}{}^3=a,&
\Gamma_{12}{}^1=-f,&\Gamma_{12}{}^2=-e,&\Gamma_{12}{}^3=0,\\ [0.05in]
\Gamma_{13}{}^1=b,&\Gamma_{13}{}^2=c,&\Gamma_{13}{}^3=0,&
\Gamma_{22}{}^1=-e,&\Gamma_{22}{}^2=f,&\Gamma_{22}{}^3=a,\\ [0.05in]
\Gamma_{23}{}^1=-c,&\Gamma_{23}{}^2=b,&\Gamma_{23}{}^3=0,&
\Gamma_{33}{}^1=0,&\Gamma_{33}{}^2=0,&\Gamma_{33}{}^3=d.
\end{array}$$
Let $S$ be an element of order $2$ in $G_\Gamma^+$.
Let $N(S,-1)$ be the $-1$ eigenspace of $S$ and let $V=e_3^\perp$. Since
$\dim\{N(S,-1)\}+\dim\{V\}-3=1$, $N(S,-1)$ intersects $V$. If $N(S,-1)=V$, then
$S$ commutes with $T$ so $TS$ is an element of order 6 which is impossible.
Thus $N(S,-1)\cap V$ is 1-dimensional. We choose the basis for $V$ so
$N(S,-1)\cap V=e_2\cdot\mathbb{R}$; this implies $f=0$.
We have $N(TST^{-1},-1)\cap V=Te_2\cdot\mathbb{R}$ and $N(T^2ST^{-2},-1)\cap V=T^2e_2\cdot\mathbb{R}$.
Thus $\{S,TST^{-1},T^2ST^{-2}\}$ are 3 distinct elements of order $2$ in $G_\Gamma^+$.
We have already ruled out the case $Se_3=e_3$. There are 3 remaining cases:

\smallbreak\noindent{\bf Case 1.} Suppose $Se_3=-e_3$. This means
$S\in\operatorname{O}(2)-\operatorname{SO}(2)$.
So we can choose the basis so $Se_1=e_1$ and $Se_2=-e_2$).
This implies $f=0$, $a=0$, $b=0$, and $d=0$. We obtain
$\rho=\operatorname{diag}(-2e^2,-2e^2,2c^2)$. In particular, $\rho$ is indefinite.
We can renormalize the coordinates so $e=1$ and $c=1$
to obtain the structure of Assertion~1. We have $STS^{-1}=T^{-1}$; $\{\id,T,T^2,S,ST,ST^2\}$
is a non-Abelian group of order 3 and
hence isomorphic to $s_3$. We note that $\{T,T^2\}$ are the elements of order $3$ in $s_3$ and
$\{S,TST^{-1},T^2ST^{-2}\}$ are the elements of order $2$ in $s_3$.

Suppose that $\tilde S$ is an element of order $2$ in $G_\Gamma^+$  which does not belong to
$s_3$. Since we have established the signature is indefinite, the argument we will give
in Case 2 below shows that
$\tilde Se_3=-e_3$ as well. Since $S\tilde Se_3=e_3$,
$S\tilde S$ belongs to $\operatorname{SO}(2)\cap G_\Gamma^+=\{\id,T,T^2\}$ and $\tilde S\in s_3$.
Thus the elements of order $2$ in $G_\Gamma^+$ are given by $\{S,TST^{-1},T^2ST^{-2}\}$.
Suppose $\tilde T$  is an element of $G_\Gamma^+$ which is not in $s_3$. Thus $\tilde T$ has
order $3$. Since $\tilde TT$ is not in $s_3$, $\tilde TT$ has order 3 as well. Lemma~\ref{L7.3}
then implies $\tilde TT^2$ has order $1$ or order $2$ and hence belongs to $s_3$. This implies
$\tilde T$ belongs to $s_3$. This contradiction then shows, as desired,that $G_\Gamma^+=s_3$.

\smallbreak\noindent{\bf Case 2. } Suppose $Se_3\ne\pm e_3$ and $\rho$ is indefinite.
We rescale $e_1$ to assume $e=1$. We rescale $e_2$ and $e_3$ to assume
$$
\rho(e_1,e_1)=\rho(e_2,e_2)=-\rho(e_3,e_3)\ne0\text{ and }\rho(e_i,e_j)=0\text{ for }i\ne j\,.
$$
We find $\{x,y\}$ so $x^2-y^2=1$ with $y\ne0$. It then follows $x\ne0$. Express:
$$\begin{array}{lll}
Se_1=xe_1+ye_3,&Se_2=-e_2,&Se_3=-ye_1-xe_3,\\ [0.05in]
Se^1=xe^1-ye^3,&Se^2=-e^2,&Se^3=ye^1-xe^3.
\end{array}$$
Let $\Delta_{ij}{}^k:=(S_*\Gamma)_{ij}{}^k-\Gamma_{ij}{}^k$. We have
$0=\Delta_{22}{}^2=-2f$.  Thus $f=0$. We then have
$$
0=\Delta_{11}{}^2=-2cxy,\quad
0=\Delta_{22}{}^1=1-x-ay,\quad
0=\Delta_{12}{}^2=1-x+by\,.
$$
Since $xy\ne0$, we have $c=0$, $b=-a$, and $x=1-ay$.
We impose these relations and compute:
$$
0=\Delta_{23}{}^2=2a+(1-a^2)y\,.$$
This implies $a^2\ne1$. We obtain therefore
$$y=\frac{2a}{a^2-1}\quad\text{and}\quad x=\frac{1+a^2}{1-a^2}\,.$$
We verify $x^2-y^2=1$. Since $y\ne0$, $a\ne0$. The relations $\Delta_{11}{}^1=0$ and $\Delta_{33}{}^3=0$
imply
$$
3+3a^2+2a^4+2ad=0,\quad 10a^3+6a^5+d+3a^4d=0.\,.
$$
We eliminate $d$ to see:
$$d=-(3+3a^2+2a^4)(2a)^{-1}=-(10a^3+6a^5)(1+3a^4)^{-1}\,.
$$
After cross multiplying and simplifying, we obtain
$3(-1+a^2)^2(1+3a^2+2a^4)=0$. This implies $a=\pm1$ which is not permitted.
Thus, as claimed earlier, Case 2 is impossible.

\medbreak\noindent{\bf Case 3.} Suppose $Se_3\ne\pm e_3$ and $\rho$ is definite.
We may assume $e=1$ and $f=0$.
$$
\rho(e_1,e_1)=\rho(e_2,e_2)=\rho(e_3,e_3)\ne0\text{ and }\rho(e_i,e_j)=0\text{ for }i\ne j\,.
$$
We find $\{x,y\}$ with $x^2+y^2=1$ with $y\ne0$ so
$$\begin{array}{ccc}
Se_1=xe_1+ye_3,&Se_2=-e_2,&Se_3=-ye_1-xe_3,\\[0.05in]
Se^1=xe^1+ye^3,&Se^2=-e^2,&Se^3=ye^1-xe^3.
\end{array}
$$
We compute
$$\begin{array}{lll}
0=\Delta_{12}{}^3=cy^2,&0=\Delta_{22}{}^1=1-x+ay,&0=\Delta_{12}{}^2=1-x+by.
\end{array}$$
We obtain $c=0$, $a=b$, and $x=1+by$. We impose these relations and compute:
\begin{eqnarray*}
&&0=\Delta_{23}{}^2=-2a-y-a^2y\text{ so }\\
&&x=(1-a^2)(1+a^2)^{-1}\text{ and }y=-2a(1+a^2)^{-1}\,.
\end{eqnarray*}
We note $x^2+y^2=1$. Furthermore, since $y\ne0$, $a\ne0$.
The relations $0=\Delta_{11}{}^1$ and $0=\Delta_{13}{}^1$ imply
$$
0=3-3a^2+2a^4+2ad\text{ and }0=1-3a^2+4a^4+ad-a^3d\,.$$
If $a=\pm1$, the second equation is inconsistent and thus $a\ne\pm1$ so $x\ne0$.
We set $d=-\frac{(3-3a^2+2a^4)}{2a}$ and substitute this into the second relation
to see $2a-4a^3=0$. Since $a\ne0$, $a=\pm\frac1{\sqrt2}$.
We solve to see $b=\pm\frac1{\sqrt2}$ and $d=\mp\sqrt2$.
This gives rise to two possibilities:
$$\begin{array}{lllllllll}
a=b=\frac1{\sqrt2},&c=0,&d=-\sqrt2,&e=1,&f=0,
&x=\frac13,&y=-\frac{2\sqrt2}3,\\ [0.05in]
a=b=-\frac1{\sqrt2},&c=0,&d=\sqrt2,&e=1,&f=0,
&x=\frac13,&y=\frac{2\sqrt2}3.
\end{array}$$
We remark that $x=\frac13$ corresponds to $\tilde x=\frac13$ in the analysis of Case 1 in the proof of
Lemma~\ref{L7.3}; this is, of course, not an accident that this value surfaces again.
If we consider $e_2\rightarrow-e_2$ and $e_3\rightarrow-e_3$, we simply interchange
these two solutions. So there is really only one solution. This gives rise to the exceptional case given
in Assertion~(3b).

Let $\tilde S\in G_\Gamma^+$ have order $2$. Then there exists $\xi:=xe_1+ye_2$  for some $x^2+y^2=1$
so $\tilde S\xi=-\xi$. But then $\rho(\nabla_{\xi}\xi,\xi)=0$. Expanding this out yields the relation
$-3xy^2+x^3=0$. Since $x^2+y^2=1$, either $x=0$ and $\xi=\pm e_2$ or
 $x=\pm\frac{\sqrt3}2$ and $y=\pm\frac12$.
Thus the line thru $\xi$ is a rotation of $\pm\frac{2\pi}3$ from the line through $e_2$ and the argument
above shows $\tilde S$ is unique. This shows that $\{S,TST^{-1},T^2ST^{-2}\}$ are the 3
elements of order 2 and conjugation by $T$ permutes them cyclically. Let $\tilde T$ be another element
of order $3$. By Assertion~2 of Lemma~\ref{L7.1}, if $\tilde T$ is another element of order $3$, then
either $\tilde TT$ has order 2 or $\tilde TT^2=\id$ or $\tilde TT^2$ has order 2. But in any event,
this implies $\tilde T$ belongs to the subgroup generated by $T$ and $S$.

\subsection{The proof Theorem~\ref{T1.12}~(4)}\label{S7.7}
 Suppose every element of $G_\Gamma^+$ is of order 2. Then
$ABAB=\id$ implies $ABA^{-1}=B$ since every element is idempotent. Thus $G_\Gamma^+$ is Abelian.
We can simultaneously diagonalize the elements of $G_\Gamma^+$. Since we are dealing with $\mathbb{R}^3$,
either $G_\Gamma^+=\mathbb{Z}_2\oplus\mathbb{Z}_2$ or $G_\Gamma^+=\mathbb{Z}_2$.
We suppose the former possibility pertains.
 Let $S_1$ and $S_2$ generate $\mathbb{Z}_2\oplus\mathbb{Z}_2$. These
idempotent matrices commute and thus can be simultaneously diagonalized. Since
$\det(S_i)=+1$, each $S_i$ has two $-1$ eigenvalues and one $+1$ eigenvalue and
$H=\{\id,T_1,T_2,T_3\}$. If $\Gamma$ is invariant under this action, then $\Gamma_{ij}{}^k$ must
contain each index exactly once. We compute:
$$
\Gamma_{12}{}^3=a^3,\quad\Gamma_{13}{}^2=a^3,\quad\Gamma_{23}{}^1=a^1,\quad
\rho=-2\operatorname{diag}(a^2a^3,a^1a^3,a^1a^2)\,.
$$
If we rescale and let $\tilde e_i=\mu_ie_i$, then
$$
(a^1,a^2,a^3)
\rightarrow(\mu_2\mu_3\mu_1^{-1}a^1,\mu_1\mu_3\mu_2^{-1}a^2,\mu_1\mu_2\mu_3^{-1}a^3)\,.
$$
We can certainly rescale to assume $a^1=1$. To preserve this normalization, we require
$\mu_1=\mu_2\mu_3$. We then have $\tilde a^2=\mu_2\mu_3\mu_3\mu_2^{-1}a^2=\mu_3^2a^{-2}$
and $\tilde a^3=\mu_2\mu_3\mu_2\mu_3^{-1}=\mu_2^2a^3$. Consequently we can obtain
$a^2=\pm1$ and $a^3=\pm1$. So the possibilities become
$\vec a\in\{(1,-1,-1),(1,-1,1),(1,1,-1),(1,1,1)\}$. By permuting the elements we can get
$\vec a\in\{(1,-1,-1),(1,1,-1),(1,1,1)\}$. By replacing $e_i\rightarrow -e_i$, we can get
$\vec a\in\{(1,-1,-1),(1,1,1)\}$ as claimed.
The first possibility is discussed in Case~4a.
The second case contains the permutation $e_1\rightarrow e_2\rightarrow e_3\rightarrow e_1$
and is discussed in Assertion~3b; the structure group is $a_4$ and is not $\mathbb{Z}_2\oplus\mathbb{Z}_2$.
\hfill\qed

\section*{Acknowledgments} It is a pleasure to acknowledge useful conversations on this subject with
Professors T. Arias-Marco, M. Brozos-V\'{a}zquez, E. Garc\'{i}a-R\'{i}o, O. Kowalski, A. Kwok, and E. Puffini.
Research partially supported by project GRC2013-045 (Spain) and by the Basic Science
Research Program through the National Research Foundation of
Korea (NRF) funded by the Ministry of Education (2014053413).


\begin{thebibliography}{lll}

\bibitem{A06}I. Agricola, ``The Srni lectures on non-integrable geometries with torsion",
{\it Archives Mathematicum (Brno) \bf 42} (2006) Supplement, 5--84.

\bibitem{ACF05} I. Agricola, S. Chiossi, and A. Fino, ``Solvmanifolds with integrable and non-integrable
$G_2$ structures", {\it J. Diff. Geo. and Appl. \bf 25} (2007), 125--135.

\bibitem{ACFH15} I. Agricola, S. Chiossi, T. Friedrich, J. H\"oll,
``Spinorial description of $SU(3)$ and $G_2$ manifolds", {\it J. Geom. Phys. \bf 98} (2015), 535-555.

\bibitem{AMK08} T. Arias-Marco and O. Kowalski,
``Classification of locally homogeneous affine connections with arbitrary torsion on 2-manifolds",
{\it Monatsh. Math. \bf 153} (2008), 1--18.

\bibitem{BM16} X. Bekaert and K. Morand, ``Connections and dynamical trajectories in generalized Newton-Cartan gravity I. An
intrinsic view", {\it J. Math. Physics \bf 57} (2016), 022507.

\bibitem{Bo75} W. Boothby, ``An introduction to differentiable manifolds and Riemannian geometry", {\it Academic Press} (1976), New York.

\bibitem{BGGP16} M. Brozos-V\'{a}zquez, \, E. Garc\'{i}a-R\'{i}o, and P. Gilkey,
``Homogeneous affine surfaces: Killing vector fields and gradient Ricci solitons",
http://arxiv.org/abs/1512.05515.

\bibitem{BGGP16a} M. Brozos-V\'{a}zquez, \, E. Garc\'{i}a-R\'{i}o, and P. Gilkey,
``Homogeneous affine surfaces: Moduli spaces",
http://arxiv.org/abs/1604.06610. To appear JMAA.

\bibitem{B15} S. Bunk, ``A method of deforming $G$-structures", {\it J. Geom. and Phys. \bf 15}, 72--80.

\bibitem{CGGV09}
E. Calvi\~no-Louzao, E. Garc\'{i}a-R\'{i}o, P. Gilkey, and R. V\'azquez-Lorenzo,
``The geometry of modified Riemannian extensions",
\emph{Proc. R. Soc. Lond. Ser. A Math. Phys. Eng. Sci.} \textbf{465} (2009), 2023--2040.

\bibitem{CGV10}
E. Calvi\~{n}o-Louzao, E. Garc\'{i}a-R\'{i}o, and R. V\'{a}zquez-Lorenzo,
``Riemann Extensions of Torsion-Free Connections with Degenerate Ricci Tensor",
{\it Canad. J. Math. \bf 62} (2010), 1037--1057.

\bibitem{C12} R. Cartas-Fuentevilla, ``Fully torsion-based formulation for curved manifolds",
{\it IJGMMP \bf 10} (2013), 1350021.

\bibitem{CSE11} R. Cartas-Fuentevilla, J. Solano-Altamirano, and P. Enriquez-Silverio,
``Post-Riemannian approach for the symplectic and elliptic geometries of gravity",
{\it J. Phys. A. \bf 44} (2011), 195206.

\bibitem{RFSC13} R. da Rocha, L. Fabbri, J. da Silva, R. Calvalcanti, and J. Silva-Neto,
``Flag-dipole spinor fields in ESK gravities", J. Math. Phys. {\bf 54} (2013), 102505.


\bibitem{De}
A. Derdzinski,
``Noncompactness and maximum mobility of type III Ricci-flat self-dual neutral Walker four-manifolds",
\emph{Q. J. Math.} \textbf{62} (2011), 363--395.

\bibitem{DEG13} S. Deser, S. Erti, and D. Grumiller, ``Canonical bifurcation in higher derivative, higher spin theories",
{\it J. Phys. A. \bf 46} (2013), 214018.

\bibitem{DL15} G. Dileo and A. Lotta, ``Some Einstein nil manifolds with skew torsion arising in CR geometry",
{\it IJGMMP \bf 12} (2015), 1560017.


\bibitem{Du}
S. Dumitrescu,
``Locally homogeneous rigid geometric structures on surfaces"
\emph{Geom. Dedicata} \textbf{160} (2012), 71--90.

\bibitem{D15} V. Dzhunushaliev, ``Cosmological constant and Eucliden space from nonperturbative quantum torsion",
{\it IJGMMP \bf 12} (2015), 1550008.

\bibitem{F13} L. Fabbri, ``Metric solutions in torsionless gauge for vacuum conformal gravity",
{\it J. Math. Phys. \bf 54} (2013), 62501.


\bibitem{F14} L. Fabbri, ``Least-order torsion gravity for fermion fields, and the nonlinear potentials in the standard model",
{\it IJGMMP \bf 11} (2014), 1450073.

\bibitem{F15} L. Fabbri, ``A discussion on the most general torsion-gravity with electrodynamics for Dirac spinor matter fields",
{\it IJGMMP \bf12} (2015), 1550099.

\bibitem{FIS14} S. Feodoruk, E. Ivanov, and A. Smilga,
``$N=4$ mechanics with diverse (4,4,0) multiplets: explicit examples of hyper-K\"ahler with torsion, Clifford K\"ahler with torsion, and octonionic K\"ahler with torsion geometries", {\it J. Math. Phys. \bf 55} (2014),  052302.

\bibitem{FS16} S. Feoruk and A. Smilga, ``Bi-HKT and bi-K\"ahler supersymmetric sigma models", {\it J. Math. Phys. \bf 57} (2016), 042103.

\bibitem{FS15} S. Fedoruk and A. Smilga, ``Comments on HKT supersymmetric sigma models and their Hamiltonian
reduction", {\it J. Phys. A. \bf 48} (2015), 215401.

\bibitem{FLM16} J. Francois, S. Lazzarini, and T. Masson, ``Becchi-Rouet-Stora-Tyutin structure for the mixed Weyl-diffeomorphism,
{\it J. Math. Phys. \bf 57} (2016), 033504.

\bibitem{FI02} Th. Friedrich and S. Ivanov, ``Parallel spinors and connections with skew-symmetric torsion in string theory",
{\it Asian J. Math. \bf6} (2002), 303--336.

\bibitem{FI03} Th. Friedrich and S. Ivanov, ``Almost contact manifolds, connections with torsion, and parallel spinors",
{\it J. Reine Angew. Math. \bf 559} (2003), 217--236.

\bibitem{GHL04} S. Gallot, D. Hulin, J. Lafontaine, ``Riemannian Geometry $3^{\operatorname{rd}}$ ed",
Springer Universitext (2014). 

\bibitem{G16} P. Gilkey, ``The moduli space of Type~$\mathcal{A}$ surfaces with torsion
and non-singular symmetric Ricci tensor",  arXiv:1605.06698.

\bibitem{GL15} P. Gilkey and B. Lim, ``Projective affine Ossermann curvature models",
{\it J. Fixed Point Theory and Appl \bf 16} (2015), 243--258.

\bibitem{GPV15} P. Gilkey, J. H. Park, and R. V\'{a}zquez-Lorenzo,
``Aspects of Differential Geometry I",
Lecture \#15 (Synthesis lectures on mathematics and statistics),
Morgan and Claypool (2015), Williston VT, ISBN 978-1-62705-662-5.

\bibitem{GMW04} J. Gauntlett, D. Martelli, and D. Waldram, ``Superstrings with intrinsic torsion",
{\it Phy. Rev. D \bf 69} (2004), 086002.

\bibitem{GDLS14} J. Gegenberg, A. Day, H. Liu, and S. Seahra,
``An instability of hyperbolic space under the Yang-Mills flow",
{\it J. Math. Phys. \bf 55} (2014), 042501.

\bibitem{G-SG}
A. Guillot and A. S\'anchez-Godinez,
``A classification of locally homogeneous affine connections on compact surfaces",
\emph{Ann. Global Anal. Geom.} \textbf{46} (2014), 335--349.

\bibitem{HZ13} O. Hohm and B. Ziebach, ``Towards an invariant geometry of double field theory",
{\it J. Math. Phys. \bf 54} (2013), 032303.

\bibitem{I04} S. Ivanov, ``Connections with torsion, parallel spinors, and geometry of spin(7) manifolds",
{\it Math. Research Letters \bf 11} (2004), 171--186.


\bibitem{IM14} M. Ivanova and M. Manev, ``A classification of the torsion tensors on almost contact manifolds
with B-metric", {\it Cent. Eur. J. Math. \bf 12} (2004), 1416--1432.

\bibitem{K10} M. Kassuba, ``Eigenvalue estimates for Dirac operators in geometries with torsion", {\it Ann. Glob. Anal. Geom. \bf 37}
(2010), 33--71.

\bibitem{KoSe}
O. Kowalski and M. Sekizawa,
``The Riemann extensions with cyclic parallel Ricci tensor".
\emph{Math. Nachr.} \textbf{287} (2014), 955--961.

 \bibitem{KVOp2}
O. Kowalski, B. Opozda, and Z. Vlasek,
``A classification of locally homogeneous affine connections with skew-symmetric Ricci tensor on $2$-dimensional manifolds'',
\emph{Monatsh. Math.} \textbf{130} (2000), 109--125.

\bibitem{KVOp}
O. Kowalski, B. Opozda, and Z. Vlasek,
``On locally nonhomogeneous pseudo-Riemannian manifolds with locally homogeneous Levi-Civita connections''.
\emph{Internat. J. Math.} \textbf{14} (2003), 559--572.

\bibitem{KV03} O. Kowalski and Z. Vlasek,
``On the local moduli space of locally
homogeneous affine connections in plane domains",
\emph{Comment. Math. Univ. Carolinae} \textbf{44} (2003), 229--234.

\bibitem{LP13} R. Lompay and A. Petrov, ``Covariant differential identities and conservation laws in
metric-torsion theories of gravity. I. General consideration",
{\it J. Math. Phys. \bf 54} (2013), 062504.

\bibitem{LP13a} R. Lompay and A. Petrov, ``Covariant differential identities and conservation laws in
metric-torsion theories of gravity. II. Manifestly generally covariant theories",
{\it J. Math. Phys. \bf 54} (2013), 102504.

\bibitem{M11} M. Manev, ``A connection with parallel torsion on almost hypercomplex manifolds
with Hermitian and anti-Hermitian metrics", {\it J. Geom. Phys. \bf 61} (2011), 248--259.

\bibitem{M12} M. Manev, ``Natural connection with totally skew-symmetric torsion on almost contact manifolds with B-metric",
{\it Int. J. Geom. Methods. Mod. Phys. \bf 9} (2012), 125044.

\bibitem{MG11} M. Manev and K. Gribachev, ``A connection with parallel totally skew-symmetric torsion
on a class of almost hyper complex manifolds with Hermitian and anti-Hermitian metrics", {\it IJGMMP \bf 8} (2011), 115--131.


\bibitem{Me12} D. Mekerov, ``Natural connection with totally skew-symmetric torsion on Riemannian almost product manifolds",
{\it IJGMMP \bf 09} (2012), 1250003.

\bibitem{Op04} B. Opozda, ``A classification of locally homogeneous connections on 2-dimensional manifolds",
J. Diff. Geo. Appl. {\bf 21} (2004), 173--198.

\bibitem{Opozda}
B. Opozda, ``Locally homogeneous affine connections on compact surfaces",
\emph{Proc. Amer. Math. Soc.} \textbf{132} (2004), 2713--2721.

\bibitem{Sx12} A. Smilga, ``Supercharges in the hyper-K\"ahler with torsion supersymmetric sigma models",
{\it J. Math. Phys. \bf 54} (2012), 112105.


\bibitem{S12} C. Stadtm\"uller, ``Adapted connections on metric contact manifolds", {\it J. Geom. Phys. \bf62} (2012), 2170--2187.

\bibitem{VCF15} S. Vignolo, S. Carloni, and L. Fabbri, ``Torsion gravity with non minimally coupled sermonic field: some
cosmological models", {\it Phys. Rev. D. \bf 91} (2015), 043528.

\bibitem{WWY14}
J. Wang, Y. Wang, and C. Yang, ``Dirac operators with torsion and the non-commutative residue for manifolds with boundary",
{\it J. Geom. and Phys. \bf 81} (2014), 92--111.

\end{thebibliography}
\end{document}